\pdfoutput=1
\documentclass[a4paper,12pt]{article}

%%%%%%%%%%%%%%%%%%%%%%  PACKAGES %%%%%%%%%%%%%%%%%%%%%%
\usepackage[english]{babel}
\usepackage[dvipsnames]{xcolor} % For personifiable color set
\usepackage{amsmath,amssymb,amsthm,mathtools,newtxmath,extpfeil} % Maths packages
\usepackage{mdframed} % to put figures in box... we may want to remove this later anyway.
\usepackage{extarrows}
\usepackage{multirow} % For multirow in tables
\usepackage{microtype} %Affiliation and spaceing
\usepackage{xifthen} % For \iftenelse and \isempty (\stdsimp)
\usepackage{pict2e} % For the adjunction macro
\usepackage{enumerate} % For item numbering in enumerate environment
\usepackage{enumitem} % For item referencing
\usepackage{calligra} %  For script font text
\usepackage{csquotes}
\usepackage[font={footnotesize},labelfont={footnotesize}]{caption} % For caption formatting

%%%%%%%%%%%% TODO NOTE %%%%%%%%%%%%
% \usepackage[todonotes={textsize=scriptsize}]{changes}
% \usepackage{comment} % For Todo notes
% \setlength{\marginparwidth}{2.2cm} % For larger todo notes
%%%%%%%%%%%%%%%%%%%%%%%%%%%%%%%%%%%

%%%%%%%%%%%% AFFILIATION %%%%%%%%%%%%
\usepackage{authblk} % For author affiliations

\renewcommand\Affilfont{\normalsize}
\setlength{\affilsep}{0.5em}
\makeatletter
\renewcommand\AB@affilsepx{, \protect\Affilfont}
\makeatother
%%%%%%%%%%%%%%%%%%%%%%%%%%%%%%%%%%%%%

%%%%%%%%%%%% REFERENCE LINKS %%%%%%%%%%%%
\usepackage[
  colorlinks=true,  % Remove the boxes
  linktocpage=true, % Make page numbers (not section titles) links in ToC
  linkcolor=NavyBlue,    % Colour for internal links
  citecolor=Green,  % Colour for bibliographical citations
  urlcolor=BrickRed, % Colour for (external) urls
]{hyperref} % For clickable refs
\usepackage[capitalize]{cleveref} % For auto-environment name referencing
%%%%%%%%%%%%%%%%%%%%%%%%%%%%%%%%%%%%%%%%%

%%%%%%%%%%%% BIBLIOGRAPHY %%%%%%%%%%%%
\usepackage[
  style=alphabetic,
  maxbibnames=5,
  backend=biber
]{biblatex} % Bibliography
\DeclareFieldFormat{postnote}{#1}
\DeclareFieldFormat{multipostnote}{#1}
\addbibresource{biblio.bib}

%%%%%%%%%%%%%%%%%%%%%%%%%%%%%%%%%%%%%%

%%%%%%%%%%%% PAGE FRAMING %%%%%%%%%%%%
\usepackage{geometry} % For margin
 \geometry{
 a4paper,
 total={150mm,237mm}}
 \usepackage{fancyhdr}
 \fancyhf{} % clear all header fields
 
 \fancyhead[C]{\ifnumodd{\value{page}}
    {\scriptsize T.\ DE JONG, N.\ KRAUS, S.\ PAOLI, S.\ PRADAL}
    {\scriptsize A STUDY OF KOCK'S FAT DELTA}}
 \fancyfoot[C]{\footnotesize\thepage}
 \pagestyle{fancy}
 %%%%%%%%%%%%%%%%%%%%%%%%%%%%%%%%%%%%%

 %%%%%%%%%%%% TABLE OF CONTENT %%%%%%%%%%%%
\usepackage{tocloft}% For table of contents
\cftsetindents{section}{0em}{1.5em}
\cftsetindents{subsection}{0em}{2em}
\setlength{\cftbeforesecskip}{0em}

%%%%%%%%%%%%%%%%%%%%%%%%%%%%%%%%%%%%%%%%%%%

%%%%%%%%%%%% TITLES & HEADERS %%%%%%%%%%%%
\usepackage{titlesec} % For section title formatting
\titleformat*{\section}{\large\scshape\center}
\titleformat*{\subsection}{\normalsize\scshape}
\titleformat*{\subsubsection}{\normalsize}
%%%%%%%%%%%%%%%%%%%%%%%%%%%%%%%%%%%%%%%%%%%

%%%%%%%%%%%% TIKZCD %%%%%%%%%%%%
\usepackage{tikz-cd}
\usetikzlibrary{
  decorations.markings,
  decorations.pathreplacing,
  shapes.geometric,
  matrix,arrows,
  chains,positioning,
  scopes
}

% Personalised arrow
\pgfdeclarearrow{
  name = pxto,
  setup code = {
    % The different end values:
    \pgfarrowssettipend{1.5\pgflinewidth}
    \pgfarrowssetbackend{-2.5508\pgflinewidth}
    \pgfarrowssetlineend{-.25\pgflinewidth}
    \pgfarrowssetvisualbackend{-0.021\pgflinewidth}
    % The hull
    \pgfarrowsupperhullpoint{1.5\pgflinewidth}{0\pgflinewidth}
    \pgfarrowsupperhullpoint{-2.0085\pgflinewidth}{3.6525\pgflinewidth}
    \pgfarrowsupperhullpoint{-2.5508\pgflinewidth}{3.0763\pgflinewidth}
    % No saves
  },
  drawing code = {
    \pgfsetdash{}{0pt}%
    \pgfpathmoveto{\pgfpoint{1.5\pgflinewidth}{0.0254\pgflinewidth}}%
    \pgfpathlineto{\pgfpoint{-2.0085\pgflinewidth}{3.6525\pgflinewidth}}%
    \pgfpathlineto{\pgfpoint{-2.5508\pgflinewidth}{3.0763\pgflinewidth}}%
    \pgfpathlineto{\pgfpoint{-0.4322\pgflinewidth}{0.5\pgflinewidth}}%
    \pgfpathlineto{\pgfpoint{-0.4322\pgflinewidth}{-0.5\pgflinewidth}}%
    \pgfpathlineto{\pgfpoint{-2.5508\pgflinewidth}{-3.0763\pgflinewidth}}%
    \pgfpathlineto{\pgfpoint{-2.0085\pgflinewidth}{-3.6525\pgflinewidth}}%
    \pgfpathclose%
    \pgfusepathqfill
  }
}

 % For vertical mid arrows in tikz-cd
\tikzset{>=pxto}
\tikzcdset{arrow style=tikz}
\tikzset{mid vert/.style={
  /utils/exec=\tikzset{every node/.append style={outer sep=0.8ex}},
  postaction=decorate,
  decoration={
    markings,
    mark=at position 0.5 with {\draw[solid,-] (0,#1) -- (0,-#1);}}},
mid vert/.default=0.75ex}

\tikzset{
  emptyNode/.style={shape=coordinate} % For spacing of empty nodes in tikz-cd
}
%%%%%%%%%%%%%%%%%%%%%%%%%%%%%%%%%%%%

%%%%%%%%%%%%%%%%%%%%%% MACROS & OPERATORS %%%%%%%%%%%%%%%%%%%%%%

\DeclareSymbolFont{yhlargesymbols}{OMX}{yhex}{m}{n}
\DeclareMathAccent{\yhwidehat}{\mathord}{yhlargesymbols}{"62}

\DeclareMathOperator{\Cat}{\mathsf{Cat}}

\DeclareMathOperator{\SemiCat}{\mathsf{se}\mathsf{Cat}}
\DeclareMathOperator{\RelSemiCat}{\SemiCat^{\mathsf{\mskip-1mu r}}}
\DeclareMathOperator{\Graph}{\mathsf{Gph}}
\DeclareMathOperator{\RelGraph}{\Graph^{\mathsf{\mskip-1mu r}}}

\DeclareMathOperator{\Iso}{\mathsf{Iso}}

\DeclareMathOperator{\Path}{\mathsf{Path}}
\DeclareMathOperator{\rPath}{\mathsf{Path}^{\mathsf{r}}}
\DeclareMathOperator{\Mono}{\mathsf{Mono}}
\DeclareMathOperator{\op}{\mathsf{op}}
\DeclareMathOperator{\id}{\mathsf{id}}
\DeclareMathOperator*{\colim}{\mathsf{colim}}

% type-theoretic universe
\newcommand{\UU}{\mathcal{U}}

% \DeclareFontFamily{U}{min}{}
% \DeclareFontShape{U}{min}{m}{n}{<-> udmj30}{}
%\newcommand\yo{\!\text{\usefont{U}{min}{m}{n}\symbol{'207}}\!}

\newcommand{\fDel}{\underline{\Delta}}
\newcommand{\Fr}{\mathscr{F}}
\newcommand{\sFr}{\mathsf{F}}
\newcommand{\sForget}{\mathsf{U}}
\newcommand{\rFr}{\mathsf{F}^{\mathsf{r}}}
\newcommand{\rForget}{\mathsf{U}^{\mathsf{r}}}
\newcommand{\free}{\mathtt{f}}
\newcommand{\rfree}{\free^{\mathsf{r}}}
\newcommand{\frc}{\mathsf{fr}}

\newcommand{\Alt}{\mathscr{A}}
\newcommand{\Nerve}{\mathcal{N}}
\newcommand{\fNerve}{\underline{\Nerve}}
\newcommand{\pathp}{\mathsf{p}}
\newcommand{\pathq}{\mathsf{q}}
\newcommand{\Mbo}{\mathsf{bo}}
\newcommand{\oo}{\ensuremath{\infty}\nobreakdash}
\newcommand{\oox}[1]{(\oo,\! $#1$)\nobreakdash}
\newcommand{\A}{\mathcal{A}}

\newcommand{\C}{\mathcal{C}}
\newcommand{\D}{\mathcal{D}}
\newcommand{\E}{\mathcal{E}}
\newcommand{\F}{\mathcal{F}}

\newcommand{\I}{\mathcal{I}}

\newcommand{\M}{\mathcal{M}}

\newcommand{\fVert}{\mathcal{V}}

\newcommand{\src}{\mathsf{s}}
\newcommand{\trg}{\mathsf{t}}

\newcommand{\mlgar}{{\color{red}{\longrightarrow}}}

\newcommand{\vpro}{\vee}
\newcommand{\Psh}[1]{\widehat{#1}}
\newcommand{\fat}[1]{\underline{#1}}
\newcommand{\emb}[1]{\mathscr{#1}}
\newcommand{\forgetful}[1]{#1^{\mathsf{u}}}

\newcommand{\ftop}[1]{\overline{#1}}
\newcommand{\fbot}[1]{\underline{#1}}
\newcommand{\tOb}[1]{\mathtt{1}}
\newcommand{\blank}{\mathord{\hspace{0.25ex}\underline{\hspace{1ex}}\hspace{0.25ex}}}
\newcommand{\marking}[1]{{#1}^{\natural}}
\newcommand{\carrier}[1]{{#1}^{\sqcup}}

\newcommand{\altg}[1]{#1^{1}}
\newcommand{\altl}[1]{#1^{0}}
\newcommand{\alto}[1]{\mathscr{#1}}
\newcommand{\Arr}[1]{#1^{\to}}
\newcommand{\Ar}[2]{\mathsf{Ar}_{#1}(#2)}
\newcommand{\EM}[1]{\mathsf{EM}(#1)}
\newcommand{\Kl}[1]{\mathsf{Kl}(#1)}
\newcommand{\bfm}[1]{\mathbf{#1}}
\newcommand{\genm}[1]{\hat{#1}}
\newcommand{\freem}[1]{\tilde{#1}}

\renewcommand{\:}{\colon}

%--Arrow Macros--%
% Vertical bar arrow
\makeatletter
\newcommand{\vbto}{}% just for safety
\newcommand{\vblongto}{}% just for safety
\newcommand{\xvbto}[1]{\overset{#1}{\vbto}}
\newcommand{\xvblongto}[1]{\overset{#1}{\vblongto}}

\DeclareRobustCommand{\vbto}{\mathrel{\mathpalette\p@to@gets\to}}
\DeclareRobustCommand{\vblongto}{\mathrel{\mathpalette\p@longto@gets\longrightarrow}}
%\DeclareRobustCommand{\vbgets}{\mathrel{\mathpalette\p@to@gets\gets}}

\newcommand{\p@to@gets}[2]{%
  \ooalign{\hidewidth$\m@th#1\mapstochar\mkern5mu$\hidewidth\cr$\m@th#1\to$\cr}%
}
\newcommand{\p@longto@gets}[2]{%
  \ooalign{\hidewidth$\m@th#1\mapstochar\mkern5mu$\hidewidth\cr$\m@th#1\longrightarrow$\cr}%
}
\makeatother

%%--Ajunction Macro--%%
\makeatletter
\newcommand{\adjunction}{\@ifstar\named@adjunction\normal@adjunction}
\newcommand{\normal@adjunction}[4]{%
  % #1 : #2 <arrows> #3 : #4
  #1\colon #2%
  \mathrel{\vcenter{%
    \offinterlineskip\m@th
    \ialign{%
      \hfil$##$\hfil\cr
      \longrightharpoonup\cr
      \noalign{\kern-.3ex}
      \smallbot\cr
      \noalign{\kern.2ex}
      \longleftharpoondown\cr
    }%
  }}%
  #3 \noloc #4%
}
\newcommand{\named@adjunction}[4]{%
  % #1 : #2 <arrows> #3 : #4
  #2%
  \mathrel{\vcenter{%
    \offinterlineskip\m@th
    \ialign{%
      \hfil$##$\hfil\cr
      \scriptstyle#1\cr
      \noalign{\kern.1ex}
      \longrightharpoonup\cr
      \noalign{\kern-.3ex}
      \smallbot\cr
      \longleftharpoondown\cr
      \scriptstyle#4\cr
    }%
  }}%
  #3%
}
\newcommand{\longrightharpoonup}{\relbar\joinrel\rightharpoonup}
\newcommand{\longleftharpoondown}{\leftharpoondown\joinrel\relbar}
\newcommand\noloc{%
  \nobreak
  \mspace{6mu plus 1mu}
  {:}
  \nonscript\mkern-\thinmuskip
  \mathpunct{}
  \mspace{2mu}
}
\newcommand{\smallbot}{%
  \begingroup\setlength\unitlength{.25em}%
  \begin{picture}(1,1)
  \roundcap
  \polyline(0,0)(1,0)
  \polyline(0.5,0)(0.5,1)
  \end{picture}%
  \endgroup
}
\makeatother
%%---------------%%

%%%%%%%%%%%%%%%%%%%%%% THEOREM ENVIRONMENTS %%%%%%%%%%%%%%%%%%%%%%

\newtheoremstyle{TheoremNum}
{\topsep}{\topsep}              %%% space between body and thm
{\itshape}                      %%% Thm body font
{}                              %%% Indent amount (empty = no indent)
{\bfseries}                     %%% Thm head font
{.}                             %%% Punctuation after thm head
{ }                             %%% Space after thm head
{\thmname{#1}\thmnote{ \bfseries #3}}%%% Thm head spec

\theoremstyle{TheoremNum}
\newtheorem{thmn}{Theorem}

\theoremstyle{plain}
\newtheorem{thm}{Theorem}[section]
  \Crefname{thm}{Theorem}{Theorems}
\newtheorem{prop}[thm]{Proposition}
  \Crefname{prop}{Proposition}{Propositions}
\newtheorem{cor}[thm]{Corollary}
  \Crefname{cor}{Corollary}{Corollaries}
\newtheorem{lem}[thm]{Lemma}
  \Crefname{lem}{Lemma}{Lemmas}

\theoremstyle{definition}

  \Crefname{dfn}{Definition}{Definitions}
\newtheorem{constr}[thm]{Construction}
  \Crefname{constr}{Construction}{Constructions}

\theoremstyle{remark}
\newtheorem{rmk}[thm]{Remark}
  \Crefname{rmk}{Remark}{Remarks}

%%%%%%%%%%%%%%%%%%%%%% FRONT PAGE %%%%%%%%%%%%%%%%%%%%%%
\title{
		A study of Kock's fat Delta
	   }
\author[1]{Tom de Jong}
\author[1]{Nicolai Kraus}
\author[2]{Simona Paoli}
\author[1]{Sti\'{e}phen Pradal}
\affil[1]{University of Nottingham}
\affil[2]{University of Aberdeen}
\date{\vspace{-1em}
		\normalsize March 14, 2025
	 }

%%%%%%%%%%%%%%%%%%%%%%  MAIN %%%%%%%%%%%%%%%%%%%%%%

\begin{document}
\maketitle
\vspace{-1cm}
\abstract{
Motivated by the study of weak identity structures in higher category theory we explore the fat Delta category, a modification of the simplex category introduced by J.\ Kock and provide a comprehensive study via the theory of monads with arities.
Specifically, by proving that the free relative semicategory monad is strongly cartesian and identifying a dense generator, the theory of monads with arities immediately gives rise to the nerve theorem.
We characterise the essential image of the nerve via the Segal condition, and show that fat Delta possesses an active-inert factorisation system.
Building on these results, we also establish an isomorphism between two presentations of fat Delta.
}

%%% Local Variables:
%%% mode: latexmk
%%% TeX-master: "main"
%%% End:

\setcounter{tocdepth}{2}
\tableofcontents
\newpage

\addtocontents{toc}{\protect\setcounter{tocdepth}{1}}
\section{Introduction}
Higher categories come with many levels of structure, and each of the various approaches to higher category theory needs to provide a way to organise, represent and work with this structure.
One very fruitful approach to higher categories has been to use simplicial and multi-simplicial objects, that is functors from the opposite of the simplicial category $\Delta$ and its products, satisfying additional conditions that encode the composition of higher morphisms and their behaviour.

This simplicial approach has given rise to models of \oox{1}-categories (in particular \emph{quasi-categories}~\cite{Joyal2008,LurieHTT} and \emph{complete Segal spaces}~\cite{Rezk2001}) and, more generally, models of \oox{n}-categories~\cite{Bergner2011}, that is categories with morphisms in all dimensions but weakly invertible in dimension higher than $n$.
These have found important applications to mathematical physics in the field of TQFTs~\cite{Lurie2008,Bergner2011,GP2022}.
Multi-simplicial models have also been used to describe truncated higher categories, that is those with morphisms only in dimension up to $n$.
These arise naturally in homotopy theory, where, in the higher groupoidal case, they describe the building blocks of spaces, called homotopy $n$-types.
Such models include the Tamsamani model \cite{Tamsamani1999} and the weakly globular $n$-fold categories of one of the authors \cite{Paoli2019}.

The current paper studies the so-called \emph{fat Delta} category ($\fDel$, cf.~\cref{fig:del-vs-fdel}), a modification of $\Delta$ introduced by J.\ Kock~\cite{Kock2006} with the motivation of weakening the identity structure of higher categories.
In fact, $\fDel$ can be used as a means to give a diagrammatic interpretation of weak identity arrows in higher categories.
A variation of $\fDel$ was later (independently) suggested by C.\ Sattler and one of the current authors~\cite{KS2017}, with a very different motivation stemming from type theory.
Before describing the focus of this paper, we explain the motivation for our interest in this category, which comes from different areas.

\newcommand\x{1mm}
\newcommand\xx{2mm}

\newcommand\y{1.5mm}
\newcommand\yy{3mm}

\begin{figure}[h]
\begin{mdframed}
\centering
	\begin{minipage}{.5\textwidth}
	\centering
	\begin{tikzcd}[row sep=1.25cm, column sep=1.05cm, nodes={inner sep=2pt}]
		{[0]}
			\ar[r, shift left=\y]
			\ar[r, shift left=-\y]
		&
		{[1]}
			\ar[l]
			\ar[r, shift left=\yy]
			\ar[r, shift left=0mm]
			\ar[r, shift left=-\yy]
		&
		{[2]}
			\ar[l, shift right=\y]
			\ar[l, shift right=-\y]
	\end{tikzcd}
	\end{minipage}%
	\begin{minipage}{.5\textwidth}
	\begin{tikzcd}[row sep=0.85cm, column sep=0.5cm]
		{[0]}
			\ar[rr, shift left=\x]
			\ar[rr, shift left=-\x]
			\ar[d, shift left=\x]
			\ar[d, shift left=-\x]
		&&
		{[1]^{\flat}}
			\ar[rr, shift left=\xx]
			\ar[rr, shift left=0mm]
			\ar[rr, shift left=-\xx]
			\ar[dll]
			\ar[d, shift left=\x]
			\ar[d, shift left=-\x]
			\ar[dr, shift left=\x]
			\ar[dr, shift left=-\x]
		&&
		{[2]^{\flat}}
			\ar[dll]
			\ar[dl, bend left=10]
		\\
		{[1]^{\sharp}}
			\ar[d, shift left=\xx]
			\ar[d, shift left=0mm]
			\ar[d, shift left=-\xx]
			\ar[rr, shift left=\x]
		&&
		{\sigma^{1}_{0}}
			\ar[dll]
		&
		{\sigma^{1}_{1}}
			\ar[dlll, bend left=10]
			\ar[from=lll, shift left=0, bend right=20, crossing over]
			\ar[from=ul, shift left=\x, crossing over]
			\ar[from=ul, shift left=-\x, crossing over]
		\\
		{[2]^{\sharp}}
	\end{tikzcd}
	\end{minipage}
	\caption{ Initial segments of the categories $\Delta$ (left) and its modification $\fDel$ (right) by Kock~\cite{Kock2006}.
	While $\Delta$ is a Reedy category (having positive and negative morphisms which go ``up'' and ``down''), $\fDel$ is a direct category (its morphisms only go ``up'').
	The objects of $\fDel$ are the epimorphisms of $\Delta$: the top row of $\fDel$ are the identities, the second row shows the epimorphisms of $\Delta$ that are drawn in the diagram on the left, and the third row represents $[2] \twoheadrightarrow [0]$.
	}\label{fig:del-vs-fdel}
\end{mdframed}
\end{figure}
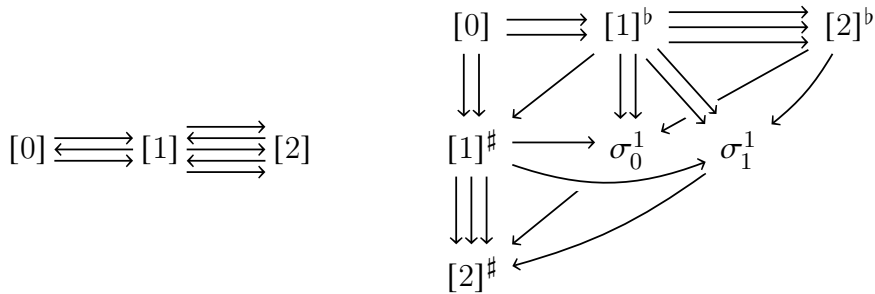

\subsection{Motivation from higher category theory}
It is well known that strict $n$-categories (that is, those where higher morphisms have compositions that obey associative and unit laws strictly) are insufficient for many applications.
For instance, strict $n$-groupoids cannot model homotopy $n$-types. Explicit examples have been found in which strict $3$-groupoids cannot model the homotopy $3$-types of spaces with non-trivial Whitehead products~\cite{Sim1998}.
Likewise, higher order cobordism categories necessitate weak higher categories~\cite[\S2.2]{Lurie2008}: that is, those where compositions of higher morphisms obey associative and unit laws only up to an invertible higher morphism in the next dimension, and these associativity and unit isomorphisms are sufficiently compatible or coherent.

C.\ Simpson conjectured in~\cite{Sim1998} that any such model of weak $n$-category should be suitably equivalent to one in which the composition laws are strictly associative, that is they hold as equalities, and the unit laws hold up to coherent isomorphism.
This is usually referred to as the \emph{weak units conjecture} for truncated weak $n$-categories.
J.\ Kock used the fat Delta category to give a precise formulation of the weak units conjecture (namely, as stating that the Tamsamani $n$-categories are suitably equivalent to fair $n$-categories)~\cite{Kock2006}.
The category of fair $n$-categories is built from functors from the opposite of $\fDel$ and its products in an inductive way, formally analogous to the Tamsamani model, but with the simplex category replaced by fat Delta.
A main difference however is that in the fair $n$-categories model, all Segal maps are isomorphisms, hence all compositions of higher morphisms are (strictly) associative, while the unit laws hold only up to isomorphisms.
The combinatorics of fat Delta allows precisely to decouple the units structure from the composition structure to make this possible, unlike in the simplicial approach.

For general $n$, this conjecture is open.
However, in the case $n=2$ this has been proved by one of the authors using the intermediary of weakly globular double categories \cite{Pao2025}, and it is hoped that this approach could in the future be extended to higher dimensions.
For $n = 3$, A.\ Joyal and J.\ Kock have given a proof for one-object 3-groupoids \cite{JK2007}.
More recently, S.\ Henry has given a proof of the conjecture for regular \oo-groupoids~\cite{Henry2018}.
The study of models of weak and semistrict (\oo,1)-categories based on $\fDel$ will appear in the forthcoming PhD thesis of the last author.

We have so far described weak units in the context of a strict composition structure.
It is possible to weaken the unital structure even if the composition structure is already weak.
In the context of infinity categories, studies in this direction include \cite{Harpaz_2015} and \cite{Haugseng_2021}.
This direction is closely related to what we discuss next, namely the motivation for $\fDel$ stemming from type theory.

\subsection{Motivation from type theory}\label{sec:intrott}
A further motivation for studying $\fDel$ comes from \emph{type theory}, a foundation for mathematics that is used as a basis for programming languages and proof assistants~\cite{MartinLof1984,NPS90}.
As a foundational system, type theory is often compared to set theory, where the primitive objects are \emph{sets}.
In contrast, \emph{types} (the primitive objects of type theory) behave more akin to \emph{spaces}, i.e.\ \oo-groupoids or homotopy types, a view that \emph{homotopy type theory (HoTT)}~\cite{hottbook} is built upon.

There have been extensive discussions about the philosophical and practical differences between set theory and type theory (cf.~\cite{Reflections}) that are unimportant for the current paper.
It suffices to understand that, while the predominant approach is to \emph{define} the notion of a space in terms of sets (for example, as a Kan simplicial set), every object in homotopy type theory \emph{is} a space.
In a nutshell, homotopy type theory is a language in which spaces are the primitive objects and all operations one can talk about are invariant under homotopy \cite{Shulman2021}.

In homotopy type theory, the collection of all (small) spaces/types is a so-called universe $\UU$.
When studying \oox{1}-categories, it seems natural to work with complete Segal spaces, with spaces being the objects in $\UU$.
The definition one might expect would be a functor from $\Delta$ to $\UU$, of which one requires Reedy fibrancy as well as a Segal and a completeness condition as additional properties.
The issue is that this formulation requires imposing functor laws and, as strict equalities are not automatically homotopy invariant, the language of homotopy type theory cannot even state them.
\footnote{While it is easily justifiable to internalise strict equality in the language, as done in \emph{two-level type theory}~\cite{ACK2016,ACKS2023}, doing so necessitates manually checking that one stays within the fibrant framework, something that does not seem possible here.}

But it turns out that (contravariant) Reedy fibrant diagrams over certain categories can be defined in a way that never leaves type theory's world of homotopy-invariant constructions~\cite{Shulman2015,ACKS2023}.
For example, a Reedy fibrant space-valued contravariant diagram over the inverse category $\mathcal C$ given by $V \rightrightarrows E$ is fully described by a space $X$ of vertices and, for all pairs $(x_1, x_2)$ in $X \times X$, a space $Y(x_1,x_2)$ of edges.
This is a complete definition of the space of Reedy fibrant functors from $\mathcal C$ to $\UU$ without explicitly mentioning equalities; all components of the usual definition can easily be derived.

Clearly, the strategy employed here can only work if the index category is \emph{direct}, i.e.\ if every sequence $A \leftarrow B \leftarrow C \leftarrow \ldots$ of non-identity morphisms is finite.
The simplex category $\Delta$ is not direct, but $\fDel$ (as well as the similar category suggested by C.\ Sattler and a current author~\cite{KS2017}) is, making it suitable for homotopy type theory.

\subsection{Contributions}
In this paper, we study the category $\fDel$ (\cref{sec:fDeldef}) via the theory of monads with arities (\cref{sec:prelim,sec:frelsem}).
The theory of monads with arities, originating in the work of M.\ Weber and others \cite{Weber2004,Weber2007,BMW2012}, allows us to directly derive a number of results about $\fDel$ that are important for the study of higher structures.
\footnote{It was successfully applied \cite{Weber2007,BMW2012} to a number of categories that proved fundamental for the development of higher structures including $\Delta$, Joyal's disk category $\Theta_n$ and the dendroidal category $\Omega$.}
In particular, in \cref{thm:arfDel}, we identify the category of arities of the free relative semicategory monad, using a non-obvious choice of dense generator.
This allows us to give a direct isomorphism between the two presentations of $\fDel$ described in \cref{sec:fDeldef}:

\begin{thmn}[\ref{thm:fDeldesc}]
	The category of finite non-empty relative semiordinals is isomorphic to $\fDel$.
\end{thmn}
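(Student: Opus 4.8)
The plan is to exhibit an explicit \emph{isomorphism} of categories, not merely an equivalence: I would define a functor that is a bijection on objects and check that it is fully faithful, equivalently by writing down a strict inverse on generators. The crucial input is \cref{thm:arfDel}, which pins down the category of arities of the free relative semicategory monad $\rFr$ as the full subcategory of $\RelSemiCat$ spanned by the free relative semicategories on the chosen dense generator, together with a concrete handle on its morphisms via the universal property. My first step is to recognise these free relative semicategories as the finite non-empty relative semiordinals themselves: unwinding the generator as the linear relative graphs and computing $\rFr$ on them shows that one of the two presentations of $\fDel$ is, on the nose, the arity category, so its hom-sets are already under control.

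The second step is the object bijection with Kock's $\fDel$. A finite non-empty relative semiordinal is a finite linear order equipped with its marking (a choice of which adjacent steps are declared degenerate), whereas an object of $\fDel$ is an epimorphism $[m]\twoheadrightarrow[k]$ of $\Delta$. I would send a relative semiordinal of length $m$ with a prescribed set of marked steps to the surjection $[m]\twoheadrightarrow[k]$ collapsing exactly those adjacent pairs; since a surjection in $\Delta$ is determined by its set of collapsed steps, this is a bijection, and the finiteness and non-emptiness conditions match the requirement of ranging over genuine epimorphisms of nonzero finite ordinals.

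For morphisms I would lean on the universal property furnished by the arity description: a map out of a free relative semicategory is freely determined by its restriction to generators. It therefore suffices to define the functor on the generating arrows of relative semiordinals --- the marking-respecting order maps together with the structural weak-unit arrows --- to verify that they correspond bijectively to the generating arrows of $\fDel$, and to confirm that the defining relations are preserved. Composition is then forced, so functoriality and faithfulness reduce to a finite generators-and-relations check, fullness holds because every generator of $\fDel$ is hit, and reading the assignment backwards yields the strict inverse functor.

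I expect the genuine difficulty to lie entirely at the level of morphisms. Kock's $\fDel$ is a direct category whose arrows simultaneously encode honest face-type inclusions and the decoupled weak-unit data, and the delicate point is to certify that the marking-respecting maps of relative semiordinals match \emph{exactly} --- neither too few nor too many --- the admissible arrows of $\fDel$, with composites agreeing on both sides. Routing everything through the behaviour on generators, as above, is what should make this tractable: it avoids manipulating arbitrary composites directly and leaves only the combinatorial bookkeeping of how marked and unmarked steps are transported along the generating maps.
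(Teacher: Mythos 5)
Your object-level identification is exactly the paper's: a relative semiordinal of length $m$ corresponds to the epimorphism $[m]\twoheadrightarrow[k]$ collapsing its marked steps, and the relative semiordinals are recognised as the free relative semicategories on the relative linear graphs. The morphism-level part, however, has two problems. First, you misread \cref{thm:arfDel}: it identifies the category of \emph{arities} $\Ar{\Alt}{\rfree}$ --- a full subcategory of $\RelGraph$ whose morphisms are only the distance-preserving (inert) ones --- with $\fDel_0$, not with a subcategory of $\RelSemiCat$ and not with all of $\fDel$; the general morphisms of $\fDel$ only appear after passing to free algebras, where the relevant hom-sets are $\RelGraph(\alpha,\rfree\alpha')$ rather than $\RelGraph(\alpha,\alpha')$. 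Second, and more seriously, you route the comparison of morphisms through presentations of both categories by generators and relations. No such presentation is stated or cited, and producing one --- and, for faithfulness, verifying its \emph{completeness} on the $\fDel$ side, or equivalently constructing the inverse on a presentation of $\fDel$ as well --- is essentially the whole difficulty you yourself flag; ``composition is then forced'' and ``a finite generators-and-relations check'' do not discharge it.

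The irony is that the tool you invoke --- maps out of a free relative semicategory are determined by their restriction to generating edges --- already gives the hom-sets outright, with no presentation needed: $\RelSemiCat(\rFr\alpha,\rFr\alpha')\cong\RelGraph(\alpha,\rfree\alpha')$, so a morphism of relative semiordinals is precisely a strictly monotone injection on vertices sending each marked edge to a marked path. This is what the paper does: it shows that the functor $\emb{i}\:\fDel\to\RelSemiCat$, sending $\eta$ to the marked linear semicategory and a square $f$ to its top arrow $\ftop{f}$, is fully faithful --- the point being that the bottom arrow of a square in $\fDel$ is uniquely recovered from $\ftop{f}$ via any section of the domain epimorphism (\cref{rmk:bodet}), while commutativity of the square is exactly marking-preservation of $\ftop{f}$. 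Combined with \cref{thm:arfDel} and the uniqueness of bijective-on-objects/fully-faithful factorisations, this yields $\Theta_{\rfree}\cong\fDel$ with no combinatorial bookkeeping. To repair your argument, replace the generators-and-relations step by this direct computation of hom-sets.
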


This is an important result as it makes precise the intuition conveyed in \cite{Kock2006,Pao2025}.
We then obtain the nerve theorem for $\fDel$ and characterise the essential image of the nerve functor via the Segal condition.

\begin{thmn}[\ref{thm:fnerve}]
	The nerve functor $\fNerve\:\RelSemiCat \to \Psh{\fDel}$, where $\RelSemiCat$ is the category of relative semicategories, is fully faithful and the essential image is spanned by the presheaves satisfying the Segal condition.
\end{thmn}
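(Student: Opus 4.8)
The plan is to deduce the statement directly from the abstract nerve theorem for monads with arities \cite{BMW2012}, feeding it the structural results obtained earlier in the paper. Write $T$ for the free relative semicategory monad on $\RelGraph$, so that $\RelSemiCat \cong \RelGraph^{T}$ is its category of Eilenberg--Moore algebras and $\fNerve$ is the canonical nerve associated with the dense generator. Two inputs are required. First, having shown that $T$ is strongly cartesian, the general theory guarantees that $T$ is a \emph{monad with arities}, the arities being the free $T$-algebras on the (non-obvious) dense generator $\Theta_0 \subseteq \RelGraph$. Second, \cref{thm:arfDel} --- and through it \cref{thm:fDeldesc} --- identifies the associated arity category $\Theta_T$, namely the full subcategory of $\RelSemiCat$ spanned by these free algebras, with $\fDel$.

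With these two facts in place, the abstract nerve theorem applies and yields both halves of the statement simultaneously. It asserts that $\Theta_T \cong \fDel$ is a dense generator of $\RelSemiCat$, that the induced nerve $\fNerve \colon \RelSemiCat \to \Psh{\fDel}$ is fully faithful, and that its essential image consists of exactly those presheaves whose restriction along the inclusion of the arities $\Theta_0 \hookrightarrow \fDel$ lies in the essential image of the nerve $\RelGraph \to \Psh{\Theta_0}$. Since $\RelGraph$ is itself a presheaf category and $\Theta_0$ a dense generator of it, this restriction condition is precisely a Segal condition: the presheaf must carry the canonical colimit decompositions of the objects of $\fDel$ into arities to limits, equivalently the associated Segal maps must be isomorphisms. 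Full faithfulness is thus immediate, and characterising the essential image reduces to making this Segal condition concrete.

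The real work --- and the step I expect to be the main obstacle --- is turning the Segal condition that the abstract theorem provides only formally into the explicit statement displayed after the theorem. I would first determine, for each object of $\fDel$, its canonical \emph{spine}: its expression as a colimit of arities inside $\RelSemiCat$. The corresponding Segal maps then compare $X$ evaluated at an object with the limit of $X$ over the arities in that spine, and the essential image is cut out by requiring these to be isomorphisms. The subtlety, absent in the classical simplicial nerve theorem, is that $\fDel$ carries the extra flat/sharp marked structure that decouples weak units from composition (cf.\ \cref{fig:del-vs-fdel}); the spine must therefore account both for the composition cells and for the unit cells, so that the resulting Segal condition encodes strict associativity of composition together with the coherent weak-unit data. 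Checking that the abstract density presentation restricts to exactly these generating cocones, so that the displayed Segal maps are the correct ones, is where the explicit combinatorial description of $\fDel$ established earlier does the essential work.
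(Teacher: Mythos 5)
Your proposal follows the paper's proof essentially verbatim: the paper likewise deduces \cref{thm:fnerve} by combining the abstract nerve theorem (\cref{thm:nerve}) with the identification of the arities $\Ar{\Alt}{\rfree}\cong\fDel_0$ (\cref{thm:arfDel}) and of the associated theory $\Theta_{\rfree}\cong\fDel$ (\cref{thm:fDeldesc}), the Segal condition being exactly the restriction condition along $\emb{j}\:\fDel_0\hookrightarrow\fDel$. The only caveat is a minor terminological slip --- the arities are objects of $\RelGraph$ (the partially marked linear graphs), not the free algebras on them, which instead constitute $\Theta_{\rfree}$ --- and the explicit spine unwinding you anticipate as ``the real work'' is not actually carried out in the paper's proof.
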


As a consequence of the theory of monads with arities, \cref{thm:fDelhomo,cor:actinfDel} imply that $\fDel$ possesses an active-inert factorisation system, a fact for which we also give a direct proof allowing us to give convenient characterisations of the active and inert morphisms.
% Building upon this result, we further study $\fDel$ using the theory of hypermoment categories and show the following:

% \begin{thmn}[\ref{thm:hypcatfDel}]
% 	The category $\fDel$ is a hypermoment category which is strongly unital and extensional.
% \end{thmn}

% Introduced in \cite{Ber2023}, the framework of hypermoment categories provides a recent general setting to study operadic structures.
% An alternative approach, more suitable for infinity-operads, can be found in~\cite{CH2021}.

\subsection{Outline}
In \cref{sec:prelim,sec:fDeldef}, we review the concepts and tools needed in the rest of the paper.
More precisely, we provide an overview of relative semicategories and the theory of monads with arities in \cref{sec:prelim}, along with several (new) technical lemmas concerning the interaction between local right adjoints and dense generators.
\cref{sec:fDeldef} is devoted to a detailed recollection of the definitions of the category $\fDel$.
In \cref{sec:frelsem}, we study $\fDel$ using the theory of monads with arities.
For that, we define both the free semicategory monad on the category of (directed) graphs and the free relative semicategory monad on the category of relative graphs arising from the respective free-forgetful adjunctions.
We show in \cref{prop:rfreeStrgCart} that they are strongly cartesian: this is a key technical requirement to apply the theory.

\subsection*{Acknowledgements}
Tom de Jong, Nicolai Kraus and Sti\'ephen Pradal have been supported by grants from the Royal Society (URF\textbackslash{}R1\textbackslash{}191055, RF\textbackslash{}ERE\textbackslash{}210032, RF\textbackslash{}ERE\textbackslash{}231052, URF\textbackslash{}R\textbackslash{}241007).
Simona Paoli would like to thank the University of Nottingham for its hospitality during a visit in April 2024.
She also thanks Clemens Berger for helpful discussions.
Finally, Sti\'ephen Pradal would like to thank Paul-Andr\'e Melli\`es and also Clemens Berger for insightful discussions on monads with arities.

\addtocontents{toc}{\protect\setcounter{tocdepth}{2}}

%%% Local Variables:
%%% mode: latexmk
%%% TeX-master: "main"
%%% End:
\section{Preliminaries and General Results}\label{sec:prelim}
In this section, we start by introducing the concept of relative semicategories, which are central to our study.
The category of interest, fat Delta $\fDel$, is deeply entangled with this notion. We will begin by defining semicategories and then proceed to relative semicategories by equipping semicategories with a marking.
In the rest of the section, we recall the background on monads with arities from \cite{Weber2007,BMW2012} and prove new results (\cref{sec:lra}) showing that genericness behaves well with colimits.
These results will be used to study the free relative semicategory monad in \cref{sec:frelsem}.

\subsection{Relative Semicategories}\label{sec:rsc}
A \emph{semicategory} $\C$ (also known as \emph{non-unital category}) consists of a class of \emph{objects} $\C_0$ and of a set of \emph{morphisms} $\C(x,y)$, for each pair of objects $x$ and $y$, equipped with a composition operation. The only coherence is the \emph{associativity law}.
Simply put, it is a category without identity structure.
A \emph{functor} $F\:\C \to \D$ between semicategories assigns to all objects $c$ in $\C$ an object $F(c)$ in $\D$ and to every morphism $f\:c \to c'$ in $\C$ a morphism $F(f)\:F(c) \to F(c')$ in $\D$ such that $F$ respects composition.
We denote by $\SemiCat$ the category of small semicategories and functors between them.

Generally, the notion of \emph{marking} or \emph{relative} captures the idea that certain things should be preserved \autocite{BK2012}.
For our interests, we would like to look at semicategories with certain morphisms that behave like identities and functors that preserve them.
Hence, this leads us to work with semicategories equipped with a class of morphisms. We call such objects \emph{relative semicategories}.
More precisely, a relative semicategory $\C$ consists of a carrier semicategory $\carrier{\C}$ and a wide subsemicategory $i_{\C}\:\marking{\C} \hookrightarrow \carrier{\C}$ (that is, they have the same class of objects) acting as the class of marked morphisms.
We denote by $\RelSemiCat$ the category of small relative semicategories and marking preserving functors.
\begin{rmk}\label{rmk:adjrsc}
	The category $\RelSemiCat$ can also be described as the category of wide monomorphisms of $\SemiCat$, namely $\Arr{\SemiCat}_{\Mbo}$ where $\Mbo$ is the class of bijective-on objects faithful functors.
	Thus, using this characterisation, a morphism $f\:\C \to \D$ in $\RelSemiCat$ can be viewed as a square
	\begin{center}
	\begin{tikzcd}
		\marking{C}
			\arrow[r, "\marking{f}"]
			\arrow[d, "i_{\C}"', hook'] &
		\marking{\D}
			\arrow[d, "i_{\D}", hook] \\
		\carrier{C}
			\arrow[r, "\carrier{f}"'] &
		\carrier{D}
	\end{tikzcd}
	\end{center}
\end{rmk}

The forgetful functors $\carrier{(\blank)},\marking{(\blank)}\:\RelSemiCat \to \SemiCat$ have left adjoints, respectively denoted by $(\blank)^{\flat}$ and $(\blank)^{\sharp}$, and form a quadruple adjunction
\begin{equation*}
	(\blank)^{\flat} \dashv \carrier{(\blank)}  \dashv (\blank)^{\sharp} \dashv \marking{(\blank)}.
\end{equation*}
The functor $(\blank)^{\sharp}\:\SemiCat \to \RelSemiCat$ marks all the morphisms and $(\blank)^{\flat}\:\SemiCat \to \RelSemiCat$ marks none of them.

\subsection{Monads with arities}\label{sec:monar}
The theory of monads with arities \cite{Weber2007,BMW2012} is the main tool we will use to study $\fDel$.
This concept is particularly useful when we study higher structures geometrically rather than algebraically (such as simplicial sets for categories), as it allows for the construction of nerve theorems and the analysis of Segal conditions.
In this section, we will first recall the concept of a monad and introduce tools such as induced monads and local right adjoints.
Then, we will define a monad with arities as a monad on a category equipped with a dense generator such that a composition preserves colimits related to the density.
We also recall the main results of \cite{Weber2007} and \cite{BMW2012} that we will use in the following sections.

\subsubsection{Cartesian monads}
A monad $(T,\mu,\nu)$ on a category $\E$ is an endofunctor $T\:\E \to \E$ equipped with a \emph{multiplication} $\mu\:TT \to T$ and a \emph{unit} $\nu\:\id_{\E} \to T$ satisfying coherences making it a monoid in the category of endofunctors of $\E$:
\begin{equation*}
	\mu\nu_T = \id_T = \mu T(\nu) \qquad \mu T(\mu) = \mu\mu_{T}.
\end{equation*}
The Eilenberg-Moore category $\EM{T}$ of a monad $T$ comes equipped with a free-forgetful adjunction $\adjunction{F^T}{\E}{\EM{T}}{U^T}$.
Recall that the category $\EM{T}$ consists of \emph{$T$-algebras} $(X,\alpha)$, that is objects $X$ of $\E$ with a morphism ${\alpha\:TX \to X}$ such that $\alpha T(\alpha) = \alpha \mu_X$ and $\alpha \nu_X = \id_X$, and the morphisms are morphisms in $\E$ commuting with the $T$-algebra structure.
The Kleisli category is the full subcategory of $\EM{T}$ consisting of the free $T$-algebras ${\mu_X\:TTX \to TX}$.
The previous adjunction restricts to a free-forgetful adjunction on the Kleisli category $\Kl{T}$.

We will say that a monad is \emph{cartesian} if $\E$ has pullbacks, $T$ preserves them and the natural transformations $\mu$ and $\nu$ are cartesian (i.e.\ all their naturality squares are cartesian).

If the category $\E$ has a terminal object $\tOb{}$, checking if $T$ is cartesian is easier:

\begin{lem}[{\cite[Remark~4.1.2b]{Leinster2004}}]\label{lem:charCartesianMonad}
	A monad $(T,\mu,\nu)$ is cartesian if and only if $\E$ has pullbacks, $T$ preserves them and the naturality squares for $\mu$ and $\nu$ at the terminal maps $!_X\:X\to \tOb{}$ are cartesian.\qed
\end{lem}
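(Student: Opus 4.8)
The plan is to prove only the nontrivial implication, since the two conditions ``$\E$ has pullbacks'' and ``$T$ preserves them'' appear on both sides of the biconditional and so require no work, and the forward direction is immediate because the naturality squares at the terminal maps $!_X\:X\to\tOb{}$ are a special case of all naturality squares. Thus everything reduces to a single statement about an arbitrary natural transformation $\alpha$ between endofunctors $F,G\:\E\to\E$: if every naturality square of $\alpha$ at a terminal map $!_X\:X\to\tOb{}$ is cartesian, then every naturality square of $\alpha$ is cartesian. I would then apply this to $\alpha=\mu$ (with $F=TT$ and $G=T$) and to $\alpha=\nu$ (with $F=\id_{\E}$ and $G=T$).

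The key tool is the pasting lemma for pullbacks. Given an arbitrary morphism $f\:X\to Y$, uniqueness of maps into the terminal object gives $!_X = {!_Y}\circ f$. Applying $F$ and $G$ and using functoriality, the naturality square of $\alpha$ at $!_X$ decomposes as the horizontal pasting of the naturality square at $f$ (on the left, with edges $Ff$, $Gf$, $\alpha_X$, $\alpha_Y$) and the naturality square at $!_Y$ (on the right, with edges $F{!_Y}$, $G{!_Y}$, $\alpha_Y$, $\alpha_{\tOb{}}$), sharing the common edge $\alpha_Y$. By hypothesis both the outer rectangle (the square at $!_X$) and the right-hand square (the square at $!_Y$) are cartesian. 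The pasting lemma, in the direction ``outer rectangle and right-hand square cartesian imply left-hand square cartesian'', then yields that the square at $f$ is cartesian. Since $f$ was arbitrary, $\alpha$ is cartesian.

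Instantiating this for both $\mu$ and $\nu$ finishes the argument: together with the standing assumptions that $\E$ has pullbacks and $T$ preserves them, cartesianness of $\mu$ and $\nu$ is by definition cartesianness of the monad $T$. I expect no serious obstacle here; the only points requiring care are the orientation of the pasting lemma (we must consume the squares at $!_X$ and $!_Y$ and deduce the square at $f$, not conversely) and the justification that the decomposition of the $!_X$-square is legitimate, which holds precisely because $F$ and $G$ are functors, so that $F{!_X}=F{!_Y}\circ Ff$ and $G{!_X}=G{!_Y}\circ Gf$. It is worth noting that preservation of pullbacks by $T$ is not in fact used anywhere in this reduction; it is merely carried along as a hypothesis present on both sides of the equivalence.
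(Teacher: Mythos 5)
Your proof is correct, and it is exactly the standard argument behind the cited result: the paper itself gives no proof (it defers to Leinster's Remark~4.1.2b with a \verb|\qed|), and that remark rests on precisely the pasting-lemma reduction you describe, applied with the correct orientation (outer rectangle at $!_X$ and right-hand square at $!_Y$ cartesian imply the square at $f$ cartesian). Your side remark that preservation of pullbacks by $T$ plays no role in this reduction is also accurate.
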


Let $\E$ be a category equipped with a wide class of morphisms $\M$.
We denote by $\Arr{\E}_{\M}$ the full subcategory of $\Arr{\E}$ consisting of maps in $\M$ and commutative squares between them.
Given a monad $(T,\mu,\nu)$ such that $T$ preserves the class $\M$, we define an induced monad $(T_{\M},\mu^{\M},\nu^{\M})$ on $\Arr{\E}_{\M}$.
The functor $T_{\M}\:\Arr{\E}_{\M} \to \Arr{\E}_{\M}$ maps arrows $f\in \M$ to $T(f)$ and similarly for squares.
The unit $\nu^{\M}\:\id_{\Arr{\E}_{\M}} \to T_{\M}$ and the multiplication $\mu^{\M}\:T_{\M}T_{\M} \to T_{\M}$ are defined at $f\in \M$ by the naturality squares of the unit $\nu\:\id_{\E} \to T$ and multiplication $\mu\:TT \to T$.
The naturality of $\nu^{\M}$ and $\mu^{\M}$ amount to the commutativity of cubes, which trivially follows from the naturality of $\nu$ and $\mu$.
Similarly, the monad laws for $T_{\M}$ follow from the laws for $T$.

\begin{prop}\label{prop:MonadClass}
	Let $\E$ be a category equipped with a wide class of morphisms $\M$ such that $\Arr{\E}_{\M}$ is closed under pullbacks, and let $T$ be a monad that restricts to $\M$.
	If $T$ is cartesian then so is $T_{\M}$.
\end{prop}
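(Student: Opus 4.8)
The plan is to exploit the fact that everything in sight is computed componentwise along the domain and codomain functors $\Arr{\E}_{\M} \to \E$, and then transfer each of the three defining properties of a cartesian monad from $T$ to $T_{\M}$.

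First I would record the basic structural fact that pullbacks in the arrow category are pointwise. Since $\Arr{\E}$ is the functor category $\E^{\mathbf 2}$ over the interval category $\mathbf 2 = \{0 \to 1\}$, limits in $\Arr{\E}$ are computed pointwise; concretely, the domain and codomain functors jointly create limits, so a commutative square in $\Arr{\E}$ is a pullback if and only if the two squares obtained by taking domains and codomains are pullbacks in $\E$. Because $\Arr{\E}_{\M}$ is by hypothesis a full subcategory closed under pullbacks, a square lying in $\Arr{\E}_{\M}$ is a pullback there precisely when it is a pullback in $\Arr{\E}$, hence precisely when its domain and codomain squares are pullbacks in $\E$. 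In particular $\Arr{\E}_{\M}$ has all pullbacks, which settles the first clause.

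Next I would check that $T_{\M}$ preserves pullbacks. The key point is that $T_{\M}$ is defined componentwise by $T$: applying the domain (resp.\ codomain) functor to $T_{\M}$ yields $T$ applied to the domain (resp.\ codomain). Given a pullback square in $\Arr{\E}_{\M}$, its domain and codomain squares are pullbacks in $\E$ by the previous paragraph; applying $T$ keeps them pullbacks since $T$ preserves pullbacks; but these are exactly the domain and codomain squares of the image square under $T_{\M}$, so that image is again a pullback. For the remaining two clauses I would unwind the definitions of $\nu^{\M}$ and $\mu^{\M}$: for a morphism $\phi \colon f \to g$ of $\Arr{\E}_{\M}$, the naturality square of $\nu^{\M}$ at $\phi$ is a square in $\Arr{\E}_{\M}$ whose domain square is the naturality square of $\nu$ at the domain component $\phi_0$ and whose codomain square is the naturality square of $\nu$ at $\phi_1$. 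Since $\nu$ is cartesian, both of these are pullbacks in $\E$, so by the componentwise criterion the naturality square of $\nu^{\M}$ is a pullback in $\Arr{\E}_{\M}$; the identical argument with $\mu$ in place of $\nu$ handles $\mu^{\M}$.

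I do not expect a genuine obstacle here: the whole argument is the observation that cartesianness is a pointwise condition and that $T_{\M}$, $\nu^{\M}$, $\mu^{\M}$ are all assembled pointwise from $T$, $\nu$, $\mu$. The only point requiring care is making the ``closed under pullbacks'' hypothesis do its job, namely to guarantee both that $\Arr{\E}_{\M}$ has the relevant pullbacks and that testing pullbacks in $\Arr{\E}_{\M}$ agrees with testing them in $\Arr{\E}$, so that the pointwise criterion applies. One might instead hope to invoke \cref{lem:charCartesianMonad}, but that would require a terminal object in $\Arr{\E}_{\M}$, which need not exist, so the direct componentwise argument is the cleaner route.
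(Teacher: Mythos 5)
Your proof is correct and follows essentially the same route as the paper's: both arguments reduce everything to the pointwise nature of $T_{\M}$, $\nu^{\M}$, $\mu^{\M}$, checking that the naturality squares in $\Arr{\E}_{\M}$ are cubes in $\E$ whose domain and codomain faces are the (cartesian) naturality squares of $\nu$ and $\mu$. Your version merely makes explicit the preliminary observation that pullbacks in $\Arr{\E}_{\M}$ are created componentwise, which the paper leaves implicit.
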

\begin{proof}
	Since $T$ is cartesian, $T_{\M}$ preserves pullbacks.
	As $\Arr{\E}_{\M}$ is closed under pullbacks, to prove that the monad $T_{\M}$ is cartesian we need to show that the unit $\nu^{\M}$ and the multiplication $\mu^{\M}$ are cartesian.
	This follows from the assumption that the natural transformations $\nu$ and $\mu$ are cartesian, as displayed by the following cubes corresponding to the naturality squares at $\alpha\: f \to f'$ in $\Arr{\E}_{\M}$:
	\begin{equation*}
	\begin{tikzcd}[row sep=scriptsize, column sep=scriptsize]
		& x
			\ar[dl, "f"']
			\ar[rr, "\nu_x"description]
			\ar[dd]
			\ar[ddrr, "\lrcorner" description, phantom, very near start]
		& & Tx
			\ar[dl, "Tf"]
			\ar[dd] \\
		y
			\ar[rr, "\nu_y" description, crossing over, near end]
			\ar[dd]
			\ar[ddrr, "\lrcorner" description, phantom, very near start]
		& & Ty \\
		& x'
			\ar[dl, "f'"']
			\ar[rr, "\nu_{x'}"description, near start]
		& & Tx'
			\ar[dl, "Tf'"] \\
		y'
			\ar[rr, "\nu_{y'}"description]
		& & Ty'
			\ar[from=uu, crossing over]
	\end{tikzcd}
	\qquad
	\begin{tikzcd}[row sep=scriptsize, column sep=scriptsize]
		& TTx
			\ar[dl, "TTf"']
			\ar[rr, "\mu_x" description]
			\ar[dd]
			\ar[ddrr, "\lrcorner" description, phantom, very near start]
		& & Tx
			\ar[dl, "Tf"]
			\ar[dd] \\
		TTy
			\ar[rr, "\mu_y" description, crossing over, near end]
			\ar[dd]
			\ar[ddrr, "\lrcorner" description, phantom, very near start]
		& & Ty \\
		& TTx'
			\ar[dl, "TTf'"']
			\ar[rr, "\mu_{x'}" description, near start]
		& & Tx'
			\ar[dl, "Tf'"] \\
		TTy'
			\ar[rr, "\mu_{y'}" description]
		& & Ty'
			\ar[from=uu, crossing over]
	\end{tikzcd}
	\end{equation*}
	The front and back squares are pullbacks, thus the corresponding squares are cartesian in $\Arr{\E}_{\M}$.
\end{proof}

\subsubsection{Local right adjoints}\label{sec:lra}
A functor $R\:\E \to \F$ is said to be a \emph{local right adjoint} (originally known as \emph{parametric right adjoint} \cite{Weber2007}) if the induced functor $R_X\:\E/X \to \F/RX$ has a left adjoint $L_X$, for each object $X$.

This notion can be described differently using so-called \emph{$R$-generic} maps.
A morphism $g\:A \vbto RX$, depicted by a vertical bar, is said to be $R$-generic if, given $\alpha$, $\beta$ and $\gamma$ making the diagram
\begin{center}
	\begin{tikzcd}
		A
		\ar[r, "\alpha"]
		\ar[d, mid vert, "g"'] &
		RX'
		\ar[d, "R\gamma"] \\
		RX
		\ar[r, "R\beta"']
		\ar[ur, "R\delta" description, dotted] &
		RY
	\end{tikzcd}
\end{center}
commute, there is a unique $\delta\:X \to X'$ such that $R\delta\,g=\alpha$ and $\beta=\gamma \delta$.
Notice that the lower triangle commutes before applying the functor.
The morphism $\delta\:X \to X'$ obtained by $R$-genericness is called an \emph{$R$-fill} in \autocite{Weber2007}; we will call it an \emph{$R$-filler} of the square.
We will say that a monad $(T,\mu,\nu)$ is \emph{strongly cartesian} if it is cartesian and $T$ is a local right adjoint.

\begin{prop}[{\cite[Lemma~2.7]{BMW2012}}]\label{prop:lrac}
	For a functor $R\:\E \to \F$ the following are equivalent:
	\begin{enumerate}[label=\arabic*.,ref=\theprop (\arabic*)]
		\item\label{prop:lrac1} R is a local right adjoint;
		\item\label[prop]{prop:lrac2} each $f\:A \to RY$ factors as $A \xvbto{g} RX \xrightarrow{Rh} RY$, where $g$ is $R$-generic.
	\end{enumerate}
	If $\E$ has a terminal object, then these conditions can be equivalently reformulated as:
	\begin{enumerate}[label=\arabic*'.]
		\item the functor $R_{\tOb{\E}}\:\E \to \F/R\tOb{\E}$ has a left adjoint;
		\item each $f\:A \to R\tOb{\E}$ factors as $A \xvbto{g} RX \xrightarrow{R!_X} R\tOb{\E}$, where $g$ is $R$-generic.\qed
	\end{enumerate}
\end{prop}

Using the previous characterisation, it is not difficult to observe that a local right adjoint $R\:\E \to \F$ induces another local right adjoint $R_*\: \E^{\I} \to \F^{\I}$ on the categories of diagrams, where $\I$ is a category.

\begin{prop}\label{prop:genind}
	If $R\:\E \to \F$ is a local right adjoint, then so is the induced functor $R_*\:\E^{\I} \to \F^{\I}$ for any category $\I$.
\end{prop}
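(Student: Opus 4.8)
The plan is to verify the factorisation characterisation of \cref{prop:lrac}: I will show that every natural transformation $f\colon A \to R_* Y$ in $\F^{\I}$ admits a factorisation $A \xvbto{g} R_* X \xrightarrow{R_* h} R_* Y$ in which $g$ is an $R_*$-generic map. The strategy is to build this factorisation pointwise, using the local right adjointness of $R$ at each object of $\I$, and then to assemble the pointwise data into a single diagram and two natural transformations. The delicate point, discussed last, is to promote pointwise genericity to genericity in $\F^{\I}$.

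Concretely, for each $i \in \I$ the component $f_i\colon A_i \to R Y_i$ factors, by \cref{prop:lrac} applied to $R$, as $A_i \xvbto{g_i} R X_i \xrightarrow{R h_i} R Y_i$ with $g_i$ an $R$-generic map; this defines $X$ on objects. To define $X$ on a morphism $u\colon i\to j$ of $\I$, I would feed the $R$-genericity of $g_i$ the square with top edge $g_j \circ A(u)$, left edge $g_i$, right edge $R h_j$ and bottom edge $R(Y(u) \circ h_i)$, whose commutativity is exactly the naturality of $f$. The resulting $R$-filler $\delta\colon X_i \to X_j$ is declared to be $X(u)$, and its two defining equations, $R X(u) \circ g_i = g_j \circ A(u)$ and $h_j \circ X(u) = Y(u) \circ h_i$, are precisely the naturality squares of $g$ and $h$ at $u$. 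Functoriality of $X$ then follows from the uniqueness clause of $R$-genericity, since in each case both candidate morphisms are $R$-fillers of one and the same square.

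The main obstacle is the final step: showing that the assembled natural transformation $g\colon A \to R_* X$, whose components are $R$-generic, is itself $R_*$-generic. I would isolate this as a lemma stating that a natural transformation is $R_*$-generic as soon as each of its components is $R$-generic. To prove it, given an $R_*$-genericity square with data $\alpha\colon A \to R_* X'$, $\gamma\colon X' \to Y$ and $\beta\colon X \to Y$, one solves it pointwise to obtain $R$-fillers $\delta_i\colon X_i \to X_i'$, and the only thing to check is that the family $(\delta_i)$ is natural. For $u\colon i\to j$, both $X'(u)\circ\delta_i$ and $\delta_j\circ X(u)$ are morphisms $X_i \to X_j'$, and a diagram chase using the naturality of $\alpha$, $\beta$, $\gamma$ and of $g$ shows that each is an $R$-filler of the single square over $g_i$ with top edge $\alpha_j\circ A(u)$ and bottom edge $R(\beta_j\circ X(u))$; uniqueness of $R$-fillers then forces them to coincide, so $\delta$ is natural, and its uniqueness as an $R_*$-filler is inherited componentwise. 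With this lemma in hand, the pointwise factorisation produced above is a factorisation of the required form in $\F^{\I}$, and \cref{prop:lrac} yields that $R_*$ is a local right adjoint.
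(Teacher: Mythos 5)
Your proposal is correct and follows essentially the same route as the paper: factor each component generically, use the uniqueness clause of $R$-genericity to make $X$ a functor and $g$, $h$ natural, and then prove that a natural transformation with $R$-generic components is $R_*$-generic by checking that the two candidate composites $X'(u)\circ\delta_i$ and $\delta_j\circ X(u)$ fill the same square. The only cosmetic difference is that you isolate the last step as a standalone lemma (which the paper proves inline, and separately records as \cref{lem:ColimPresGen} in a colimit formulation).
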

\begin{proof}
  Let $\rho\:D \to R_*D'$ be a natural transformation with $D\:\I \to \F$ and $D'\:\I \to \E$ diagrams.
  For each $i \in \I$, we consider the $R$-generic factorisation $R\freem{\rho}_i\,\genm{\rho}_i\: D_i \vbto RX_i \to RD'_i$ of $\rho_i$, as in \cref{prop:lrac2}.
  We extend the assignment $i \mapsto X_i$ to a functor $X : \I \to \E$ by defining $Xj$ for a morphism $j : i_0 \to i_1$ in $\I$ to be the $R$-filler for the left diagram below.
  This filler also makes the right diagram commute, which shows the naturality of $\genm{\rho}$ and, since the right square commutes before applying $R$, also the naturality of $\freem{\rho}$.
	\begin{equation}\label{diag:xfung}
	\begin{tikzcd}[column sep=1.5cm]
		D_{i_0}
			\ar[r,"\genm{\rho}_{i_1}Dj"]
			\ar[d,"\genm{\rho}_{i_0}"',mid vert] &
		RX_{i_1}
			\ar[d,"R\freem{\rho}_{i_1}"] \\
		RX_{i_0}
			\ar[ur,"RX_j" description,dotted]
			\ar[r,"RD'_j\,R\freem{\rho}_{i_0}"'] &
		RD'_{i_1}
	\end{tikzcd}
	\hspace{1cm}
	\begin{tikzcd}
		D_{i_0}
			\ar[d, "Dj"']
			\ar[r, "\genm{\rho}_{i_0}", mid vert] &
		RX_{i_0}
			\ar[r, "R\freem{\rho}_{i_0}"]
			\ar[d, "RX_j"] &
		RD'_{i_0}
			\ar[d, "RD'j"] \\
		D_{i_1}
			\ar[r, "\genm{\rho}_{i_1}"', mid vert] &
		RX_{i_1}
			\ar[r, "R\freem{\rho}_{i_1}"'] &
		RD'_{i_1}
	\end{tikzcd}
	\end{equation}
    Thus, we can factor $\rho$ as $R\freem{\rho}\,\genm{\rho}\: D \to R_*X \to R_*D'$.
	We now show that $\genm{\rho}\:D \to R_*X$ is $R_*$-generic. Suppose $\alpha$, $\beta$, and $\gamma$ make the square commute
	\begin{equation}\label{diag:r*gen}
	\begin{tikzcd}
		D
			\ar[r, "\alpha"]
			\ar[d, "\genm{\rho}"'] &
		R_*A
			\ar[d, "R_*\gamma"] \\
		R_*X
			\ar[r, "R_*\beta"'] &
		R_*B
	\end{tikzcd}
	\end{equation}
	Since the $\genm{\rho}_i$'s are $R$-generic, we get $R$-fillers $\delta_i\:X_i \to A_i$ making the relevant diagram commute.
	The $\delta_i$'s assemble into a unique natural transformation $\delta\:X \to A$ satisfying the equations for $R_*$-genericness.
	In fact, consider the following diagrams of solid arrows:
	\begin{equation*}
	\begin{tikzcd}[column sep=1cm, row sep=1cm]
		D_{i_0}
			\ar[r, "\alpha_{i_0}"]
			\ar[d, "\genm{\rho}_{i_0}"', mid vert] &
		RA_{i_0}
			\ar[r, "RAj"] &
		RA_{i_1}
			\ar[d, "R\gamma_{i_1}"] \\
		RX_{i_0}
			\ar[r, "RXj"']
			\ar[ur, "R\delta_{i_0}" description, bend left=10]
			\ar[urr, dotted] &
		RX_{i_1}
			\ar[r, "R\beta_{i_1}"']
			\ar[ur, "R\delta_{i_1}" description, bend right=10] &
		RB_{i_1}
	\end{tikzcd}
	\end{equation*}
	We observe that the compositions $Aj\,\delta_{i_0}$ and $\delta_{i_1}Xj$ satisfy the equations of $R$-fillers of the outer commutative square by construction.
	The commutativity of the outer square can be checked using the naturality of $\genm{\rho}$ at $j$ from \eqref{diag:xfung}, the square \eqref{diag:r*gen} at $i_1$ and the naturality of $\alpha$ at $j$.
	Thus, by $R$-genericness of $\genm{\rho}_{i_0}$, we have the equality $Aj\,\delta_{i_0}=\delta_{i_1}Xj$ witnessing the naturality of $\delta$.
	Finally, $\delta$ is pointwise unique and satisfies the equations $\gamma\delta=\beta$ and $R_*\delta\genm{\rho}=\alpha$ pointwise, and thus is an $R_*$-filler of the square \eqref{diag:r*gen}.
\end{proof}

As expected by the left adjointness, genericness interacts very nicely with colimits, as illustrated in the remaining results of this section.
In particular, they will be used in the study of the free relative semicategory monad in \cref{sec:frelsem}.

\begin{lem}\label{lem:ColimPresGen}
	Let $R\:\E \to \F$ be a functor and $D\:\I \to \F$ and $D'\:\I \to \E$ diagrams, and suppose their colimits exist.
	Given a natural transformation $\rho\:D \vbto RD'$ such that each component is $R$-generic, the canonical morphism $\hat{\rho}\:\colim_{\I} D_i \vbto R\colim_{\I} D'_i$ is $R$-generic.
\end{lem}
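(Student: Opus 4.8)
The plan is to verify the $R$-genericness of $\hat{\rho}$ directly from the universal property of the colimit, reducing an arbitrary lifting problem for $\hat{\rho}$ to the componentwise lifting problems already solved by the $R$-genericness of the $\rho_i$. Write $\iota_i\:D_i \to \colim_\I D_j$ and $\iota'_i\:D'_i \to \colim_\I D'_j$ for the colimit coprojections. First I would recall that $\hat{\rho}$ is, by definition, the unique map induced by the cocone $(R\iota'_i)\,\rho_i\:D_i \to R\colim_\I D'_j$ under $D$ (this is a cocone by naturality of $\rho$ together with the cocone identities for $\iota'$), so that $\hat{\rho}\,\iota_i = (R\iota'_i)\,\rho_i$ for every $i$.

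Now suppose we are given $\alpha$, $\gamma$, $\beta$ making the defining square for genericness commute, with $\alpha\:\colim_\I D_i \to RX'$, $R\gamma\:RX' \to RY$ and $R\beta\:R\colim_\I D'_i \to RY$. Precomposing this square with $\iota_i$ and using $\hat{\rho}\,\iota_i = (R\iota'_i)\,\rho_i$ produces, for each $i$, a commutative square whose left edge is $\rho_i$, whose top edge is $\alpha\iota_i$, and whose bottom edge is $R(\beta\iota'_i)$. Since each $\rho_i$ is $R$-generic, this yields a unique $R$-filler $\delta_i\:D'_i \to X'$ satisfying $R\delta_i\,\rho_i = \alpha\iota_i$ and $\gamma\delta_i = \beta\iota'_i$.

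The key step, which I expect to be the main obstacle, is to check that the family $(\delta_i)_i$ is a cocone under $D'$, i.e.\ that $\delta_{i_1}D'_j = \delta_{i_0}$ for every $j\:i_0 \to i_1$ in $\I$. I would establish this by showing that $\delta_{i_1}D'_j$ is itself an $R$-filler of the $i_0$-square and then invoking the uniqueness clause of the $R$-genericness of $\rho_{i_0}$. The two filler equations for $\delta_{i_1}D'_j$ follow from naturality of $\rho$ at $j$ (to rewrite $(RD'_j)\,\rho_{i_0} = \rho_{i_1}D_j$), the defining equations of $\delta_{i_1}$, and the cocone identities $\iota_{i_1}D_j = \iota_{i_0}$ and $\iota'_{i_1}D'_j = \iota'_{i_0}$. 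This is the one place where the interplay between naturality and genericness genuinely does the work.

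Once $(\delta_i)_i$ is known to be a cocone, the universal property of $\colim_\I D'_i$ supplies a unique $\delta\:\colim_\I D'_i \to X'$ with $\delta\iota'_i = \delta_i$. It then remains to verify the two filler equations $R\delta\,\hat{\rho} = \alpha$ and $\gamma\delta = \beta$, and the uniqueness of $\delta$. Each of these I would check by precomposing with the coprojections $\iota_i$, respectively $\iota'_i$, reducing to the componentwise identities already established, and concluding by the joint epimorphy of the coprojections (the uniqueness half of the colimit universal property). For uniqueness, any competing filler $\delta'$ restricts along $\iota'_i$ to an $R$-filler of the $i$-square, which must coincide with $\delta_i$ by genericness of $\rho_i$; hence $\delta'\iota'_i = \delta\iota'_i$ for all $i$, so $\delta' = \delta$. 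This completes the verification that $\hat{\rho}$ is $R$-generic.
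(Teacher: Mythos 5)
Your proposal is correct and follows essentially the same route as the paper's proof: obtain componentwise fillers $\delta_i$ from genericness of the $\rho_i$, show they form a cocone by exhibiting $\delta_{i_1}D'_j$ as a filler of the $i_0$-square and invoking uniqueness, and then assemble $\delta$ via the universal property of $\colim_\I D'$. You are somewhat more explicit than the paper about verifying the upper-triangle equation, the final filler identities, and the uniqueness of $\delta$ via joint epimorphy of the coprojections, but the argument is the same.
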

\begin{proof}
	Suppose $\alpha$, $\beta$ and $\gamma$ make the right square commute.
	\begin{equation*}
	\begin{tikzcd}[row sep=2cm]
		D_i
			\ar[d, "\rho_i"', mid vert]
			\ar[r, "d_i"] &
		\colim_{\I} D
			\ar[r, "\alpha"] &
		RX
			\ar[d, "R\gamma"] \\
		R D'_i
			\ar[r, "Rd'_i"']
			\ar[urr, "R\delta_i" description, dotted, near start] &
		R\colim_{\I} D'
			\ar[r, "R\beta"']
			\ar[ur, "R\delta" description, dotted]
			\ar[from=u, "\hat{\rho}" description, crossing over,near start] &
		RY
	\end{tikzcd}
	\end{equation*}
	Since $\rho_i$ is $R$-generic, for each $i\in \I$, we have a unique map $\delta_i\:D'_i \to X$ such that
	$R\delta_i\,\rho_i=\alpha d_i$ and $\gamma\delta_i=\beta d'_i$.
	Moreover, for any map $j\:i_0 \to i_1$ in $\I$, the composite $\delta_{i_1}D'j$ is also an $R$-filler of the outer square at $i = i_0$.
	For the lower triangle, we have $\gamma\delta_{i_1}D'j=\beta d'_{i_1}D'j=\beta d'_{i_0}$, and the equation for the upper triangle is similarly checked.
	Therefore, by uniqueness of $\delta_{i_0}$ we have $\delta_{i_1}D'j= \delta_{i_0}$.
	Consequently, the universal property of $\colim_{\I} D'$ gives us a map $\delta\:\colim_{\I} D' \to X$ and, by uniqueness, we get that $\gamma\delta=\beta$ and $R\delta\,\hat{\rho}=\alpha$.
\end{proof}

We now prove an analogous result for $R$-generic factorisations instead of $R$-generic maps.

\begin{lem}\label{lem:ColimPresGenFact}
	Let $R\:\E \to \F$ be a functor with $\E$ a cocomplete category, and let ${D'\:\I\to \E}$ and $D\:\I\to \F$ be diagrams.
	Suppose we have a natural transformation $\rho\:D \to R_*D'$ such that the component maps $\rho_i\:D_i \to RD'_i$ all admit $R$\nobreakdash-generic factorisations, and suppose $\colim_{\I}D$ exists.
	Then the canonical map $f\:\colim_{\I} D \to R\colim_{\I} D'$ has an $R$-generic factorisation.
\end{lem}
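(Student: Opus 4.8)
The plan is to deduce this from the two results just proved, \cref{prop:genind} and \cref{lem:ColimPresGen}, by first assembling the pointwise factorisations into a single $R_*$-generic factorisation in the functor categories, and only then passing to colimits. The key observation is that, although \cref{prop:genind} is stated under the hypothesis that $R$ is a local right adjoint, its proof uses nothing more than a choice of $R$-generic factorisation of each component $\rho_i$ — which is exactly what we are given here.

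First I would invoke the construction from the proof of \cref{prop:genind}: the pointwise factorisations $\rho_i = R(\freem{\rho}_i)\,\genm{\rho}_i$, with $\genm{\rho}_i\:D_i \vbto RD'_i$ having generic first factor, assemble into a functor $X\:\I \to \E$ (with $X_i$ the generic objects) and an $R_*$-generic factorisation of $\rho$,
\[
  D \xvbto{\genm{\rho}} R_*X \xrightarrow{R_*\freem{\rho}} R_*D',
\]
where $\genm{\rho}\:D \to R_*X$ has $R$-generic components and $\freem{\rho}\:X \to D'$ is a natural transformation. Since $\E$ is cocomplete, $\colim_{\I} X$ exists, and by hypothesis so does $\colim_{\I} D$.

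Next I would apply \cref{lem:ColimPresGen} to the generic part $\genm{\rho}\:D \vbto R_*X$: its components are $R$-generic and the relevant colimits exist, so the induced map $g\:\colim_{\I} D \vbto R\,\colim_{\I} X$ is $R$-generic. Finally I would post-compose with $R$ applied to the colimit of the free part. Writing $h\:\colim_{\I} X \to \colim_{\I} D'$ for the map induced by $\freem{\rho}\:X \to D'$, I claim that
\[
  \colim_{\I} D \xvbto{g} R\,\colim_{\I} X \xrightarrow{Rh} R\,\colim_{\I} D'
\]
is the desired $R$-generic factorisation. To confirm that its composite is the canonical map $f$, I would precompose with each colimit injection $d_i\:D_i \to \colim_{\I} D$ and use the identities $g\,d_i = R(x_i)\,\genm{\rho}_i$ (where $x_i\:X_i \to \colim_{\I} X$), $h\,x_i = d'_i\,\freem{\rho}_i$, and the pointwise factorisation, to compute $R(h)\,g\,d_i = R(d'_i)\,\rho_i = f\,d_i$; the universal property of $\colim_{\I} D$ then yields $R(h)\circ g = f$.

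The only genuinely non-routine point is the first step, namely recognising that \cref{prop:genind} specialises to our hypotheses without assuming $R$ is a local right adjoint globally — we merely feed its construction the given component-wise factorisations. After that the argument is bookkeeping with colimit injections, and the $R$-genericness of the first factor is delivered directly by \cref{lem:ColimPresGen}.
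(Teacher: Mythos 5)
Your proposal is correct and follows essentially the same route as the paper's proof: assemble the pointwise factorisations into the natural transformation $\genm{\rho}\:D \to R_*X$ via the construction of \cref{prop:genind}, apply \cref{lem:ColimPresGen} to its ($R$-generic) components to obtain genericness of the induced map on colimits, and verify commutativity of the resulting triangle by precomposing with the colimit injections. Your observation that the construction in \cref{prop:genind} only needs the component-wise generic factorisations, not the global local-right-adjoint hypothesis, is exactly the (implicit) point the paper relies on.
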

\begin{proof}
	Define the $R_*$-generic factorisation $D \xvblongto{\genm{\rho}} R_*X \xlongrightarrow{R\freem{\rho}} R_*D'$ of $\rho$ as in the proof of \cref{prop:genind}, and construct the canonical maps $\hat{f}\:\colim_{\I} D \to R\colim_{\I} X$ and $\tilde{f}\:\colim_{\I} X \to \colim_{\I} D'$ as displayed in the following diagram.
	\begin{equation}\label{diag:Rgefact}
	\begin{tikzcd}[row sep=1cm, column sep=1.5cm]
		D_i
			\arrow[r, "\hat{\rho}_i" description, near start, mid vert]
			\arrow[d, "d_i"' description]
			\arrow[rr, "\rho_i" description, bend left] &
		RX_i
			\arrow[r, "R\tilde{\rho}_i" description]
			\arrow[d, "Rx_i" description] &
		RD'_i
			\arrow[d, "Rd'_i" description] \\
		\colim_{\I}D
			\arrow[r, "\hat{f}" description, near start, mid vert, dotted]
			\arrow[rr, "f" description, bend right] &
		R\colim_{\I} X
			\arrow[r, "R\tilde{f}" description, dotted] &
		R\colim_{\I}D'
	\end{tikzcd}
	\end{equation}
	Let us show that the bottom composite is an $R$-generic factorisation of $f$.
	The $R$-genericness of $\hat{f}\:\colim_{\I}D \vbto R\colim_{\I}X$ follows from \cref{lem:ColimPresGen} applied to $\hat{\rho}\:D \vbto RX$.
	By the universal property of $\colim_{\I}D$, it suffices to check that $R\tilde{f}\,\hat{f}\,d_i = f\,d_i$ to show that the bottom triangle in \eqref{diag:Rgefact} commutes.
	But, since the left, right and outer squares commute by definition of $\hat{f}$, $\tilde{f}$ and $f$ respectively, and the top triangle is a factorisation, the equality is verified.
\end{proof}

\begin{rmk}\label{rmk:cocom}
	In \cref{lem:ColimPresGenFact}, we assume for convenience that $\E$ is cocomplete.
	The proof only requires that the colimits of $D$, $D'$ and $X$ exist.
\end{rmk}

Let $\E$ be a category.
A full subcategory $\A$ and the inclusion functor $i_{\A}\:\A \to \E$ are called \emph{dense} if $\A$ is small and the associated \emph{nerve functor}
\begin{equation*}
	\Nerve_{\A}\:\E \to \yhwidehat{\A}, \quad x \mapsto \E(i_{\A},x)
\end{equation*}
is fully faithful.
The dense subcategory $\A$ is also called a \emph{dense generator}.

\begin{rmk}\label{rmk:dencol}
	Recall that density is equivalent to requiring that any object $X$ in $\E$ is the colimit over the comma category $\A/X$ given by the composition $i_{\A}\pi_{X}$, where $\pi_{X}\:\A/X \rightarrow \A$ is the domain projection.
	Let us call such colimits \emph{canonical $\A$-colimits}.
\end{rmk}

The following improves \cref{prop:lrac} in case we have a dense subcategory and was proved for presheaf categories with representables as the dense subcategory in \cite[Proposition 2.10]{Weber2007}.

\begin{prop}\label{prop:LRAdenseGen}
	Let $\E$ be a cocomplete category with a terminal object $\tOb{\E}$, and $\F$ a category with a dense generator $\A$.
	A functor $R\:\E \to \F$ is a local right adjoint if and only if any map $f\:A \to R \tOb{\E}$, with $A \in \A$, has an $R$-generic factorisation.
\end{prop}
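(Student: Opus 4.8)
The plan is to deduce this from the terminal-object characterisation of local right adjoints in \cref{prop:lrac}, namely that $R$ is a local right adjoint if and only if \emph{every} map $f\:A \to R\tOb{\E}$, for an arbitrary object $A$ of $\F$, admits an $R$-generic factorisation $A \vbto RX \xrightarrow{R!_X} R\tOb{\E}$. One direction is then immediate: if $R$ is a local right adjoint, \cref{prop:lrac} already supplies such factorisations for all $A$, in particular for $A \in \A$. The content is the converse, where I must promote the existence of generic factorisations of maps out of objects of $\A$ to the existence of generic factorisations of maps out of \emph{every} object of $\F$.

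For the converse, fix an arbitrary $f\:A \to R\tOb{\E}$. By density (\cref{rmk:dencol}), $A$ is the canonical $\A$-colimit $A = \colim_{\I} D$, where $\I = \A/A$ and $D = i_{\A}\pi_A\:\I \to \F$ sends an object $(B, b\:B \to A)$ to $B$, with colimit legs $d_i\:D_i \to A$. Let $D'\:\I \to \E$ be the constant diagram at $\tOb{\E}$, so that $R_*D'$ is the constant diagram at $R\tOb{\E}$, and define $\rho\:D \to R_*D'$ by $\rho_i = f\,d_i$; naturality is immediate, since the legs satisfy $d_{i'}\,Du = d_i$ and the target diagram is constant. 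Every $D_i$ lies in $\A$, so by hypothesis each component $\rho_i\:D_i \to R\tOb{\E}$ admits an $R$-generic factorisation.

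I would now invoke \cref{lem:ColimPresGenFact}, whose hypotheses hold since $\E$ is cocomplete and $\colim_{\I} D = A$ exists: it yields an $R$-generic factorisation of the canonical map $f^{\mathrm{can}}\:A \to R\colim_{\I} D'$, say $A \vbto RX \xrightarrow{Rh} R\colim_{\I}D'$. The remaining point is that $\colim_{\I}D'$ need not equal $\tOb{\E}$, so $f^{\mathrm{can}}$ is a priori not $f$. To repair this, let $p\:\colim_{\I}D' \to \tOb{\E}$ be the unique map to the terminal object and post-compose: since $p\,d'_i = \id_{\tOb{\E}}$ for each colimit leg $d'_i$ of $D'$, checking on the legs $d_i$ gives $Rp\,f^{\mathrm{can}} = f$, so that $A \vbto RX \xrightarrow{R(ph)} R\tOb{\E}$ is the desired $R$-generic factorisation of $f$. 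This closes the converse and hence the proof.

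The step I expect to be the main obstacle is precisely this last matching: the canonical colimit $\colim_{\I}D'$ of the constant terminal diagram is a copower of $\tOb{\E}$ indexed by the connected components of $\A/A$ rather than $\tOb{\E}$ itself, so one cannot directly read off a factorisation of $f$ from \cref{lem:ColimPresGenFact}. The terminality of $\tOb{\E}$ is exactly what lets me collapse this discrepancy, since it forces $p\,d'_i = \id$ and lets the extra map $Rp$ be absorbed into the non-generic leg (a generic factorisation remains one after post-composing its second leg with any morphism in the image of $R$). Alternatively, I could bypass \cref{lem:ColimPresGenFact} and argue directly: assemble the generic middles $X_i$ into a functor $X\:\I \to \E$ using $R$-fillers as in \cref{prop:genind}, obtain an $R$-generic $\hat f\:A \vbto R\colim_{\I}X$ from \cref{lem:ColimPresGen}, and take $R!_{\colim_{\I}X}$ as the second leg; the same terminality computation then shows $f = R!_{\colim_{\I}X}\,\hat f$.
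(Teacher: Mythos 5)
Your proposal is correct and follows essentially the same route as the paper: the forward direction from \cref{prop:lrac}, and the converse by writing the domain as a canonical $\A$-colimit (\cref{rmk:dencol}) and applying \cref{lem:ColimPresGenFact} with $D'$ the constant diagram at $\tOb{\E}$. The one point where you go beyond the paper's argument is worth noting: the paper silently identifies $\colim_{\I}D'$ with $\tOb{\E}$, whereas (as you observe) it is a copower of $\tOb{\E}$ over the connected components of $\A/X$, and your post-composition with $R$ of the unique map $\colim_{\I}D' \to \tOb{\E}$ is exactly the right, and needed, repair.
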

\begin{proof}
	One direction is straightforward using \cref{prop:lrac}.
	Conversely, let $f\:X\to R\tOb{\E}$ be a map in $\F$.
	Again, by \cref{prop:lrac} it suffices to show that $f$ has an $R$-generic factorisation.
	By density, $X \cong \colim_{\A/X} i_{\A}\pi_X$. Consider the natural transformation
	\[
	i_{\A}\pi_X \longrightarrow R\tOb{\E}
	\]
	whose component at $(A\to X)$ is the composite
	$A\to X \xrightarrow{f} R\tOb{\E}$.  By the hypothesis, all its components admit
	$R$-generic factorisations.  Applying \cref{lem:ColimPresGenFact} to the
	constant diagram at $\tOb{\E}$ gives an $R$-generic factorisation
	\[
	X \xrightarrow{g} RY \xrightarrow{Rh} R\!\colim_{\A/X}\tOb{\E}
	\]
	of the canonical map $X\to R\!\colim_{\A/X}\tOb{\E}$ induced by the above
	cocone.
	Let
	$c\:\colim_{\A/X}\tOb{\E}\to\tOb{\E}$ be the unique map.
	Then $f=R(c)Rh\,g$, since this equality holds after precomposition with every
	$A\to X$ in the canonical dense cocone.
	Hence $g$ is the generic part of an $R$-generic factorisation of $f$.  By \cref{prop:lrac}, $R$ is a local right
	adjoint.
\end{proof}

\subsubsection{The theory of monads with arities}\label{sec:mwa}
A monad $(T,\mu,\nu)$ on a category $\E$ with a dense generator $i_{\A}\:\A\hookrightarrow \E$ is called a \emph{monad with arities} if the composition $\Nerve_{\A}T$ preserves canonical $\A$\nobreakdash-colimits \cite{Weber2007,BMW2012}.
In other words, $T$ is equivalent to an $i_{\A}$-relative monad \cite{ACU2015}.
The category $\Theta_T$ is defined by factoring the composition $F^T i_{\A}\:\A \to \EM{T}$ into a bijective-on-objects functor $j_T\:\A \to \Theta_T$ followed by a fully faithful functor $i_T\:\Theta_T \to \EM{T}$.
More precisely, the category $\Theta_T$ can be characterised as the full subcategory spanned by the free $T$-algebras over $\A$.
For a monad $T$ with arities $\A$, the category $\Theta_T$ is dense in $\EM{T}$:

\begin{thm}[Abstract nerve theorem {\autocite[Theorem 1.10]{BMW2012}}]\label{thm:nerve}
	Let $T$ be a monad with arities $\A$.
	The nerve $\Nerve_T\:\EM{T} \to \yhwidehat{\Theta_T}$ associated to the full inclusion $i_T$ is fully faithful, and its essential image is spanned by the presheaves whose restriction along $j_T$ belongs to the essential image of $\Nerve_{\A}$.\qed
\end{thm}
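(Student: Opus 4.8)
The plan is to derive both assertions from a single structural identity relating the two nerves. First I would establish the commuting triangle of functors obtained by restricting $\Nerve_T$ along $j_T$, namely
\[
  j_T^*\,\Nerve_T \;\cong\; \Nerve_{\A}\,U^T \colon \EM{T} \longrightarrow \Psh{\A},
\]
where $j_T^*\:\Psh{\Theta_T}\to\Psh{\A}$ is restriction along $j_T$. This is a direct computation: for an algebra $X$ and $a\in\A$,
\begin{align*}
  (j_T^*\Nerve_T X)(a)
    &= \EM{T}(i_Tj_T a, X)
     = \EM{T}(F^T i_{\A} a, X) \\
    &\cong \E(i_{\A}a, U^T X)
     = (\Nerve_{\A}U^T X)(a),
\end{align*}
using $i_Tj_T = F^T i_{\A}$ from the bijective-on-objects/fully-faithful factorisation together with the adjunction $F^T\dashv U^T$; naturality is immediate. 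This square will drive the entire argument.

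Next, for full faithfulness I would show that $\Theta_T$ is a dense generator of $\EM{T}$, which by \cref{rmk:dencol} amounts to exhibiting every algebra $X$ as the canonical colimit of the diagram $\Theta_T/X \to \Theta_T \xrightarrow{i_T} \EM{T}$. My approach combines the canonical presentation of $X$ as a reflexive coequalizer of free algebras
\[
  F^T U^T F^T U^T X \rightrightarrows F^T U^T X \longrightarrow X
\]
with the canonical $\A$-colimit decompositions of $U^T X$ and $U^T F^T U^T X$ in $\E$. Since $F^T$ is a left adjoint it carries these decompositions to colimits of objects $F^T i_{\A}a = i_Tj_T a$ lying in $\Theta_T$, so that $X$ is presented as an iterated colimit of objects of $\Theta_T$. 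The monad-with-arities hypothesis---that $\Nerve_{\A}T$ preserves canonical $\A$-colimits---is precisely what guarantees that these colimits survive the passage through the nerve and that the resulting universal cocone is identified with the canonical $\Theta_T$-cocone indexed by $\Theta_T/X$.

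Finally, for the essential image I would exploit the square again. If $P\cong\Nerve_T X$ then $j_T^* P \cong \Nerve_{\A}U^T X$ lies in the essential image of $\Nerve_{\A}$, so the stated condition is necessary for free. For sufficiency, suppose $j_T^* P \cong \Nerve_{\A}C$ for some $C\in\E$; I would reconstruct a $T$-algebra structure $\alpha\:TC\to C$ from the action of $P$ on the morphisms of $\Theta_T$ not coming from $\A$ (these encode the $T$-operations, since $j_T$ is bijective on objects), verify the algebra axioms, and check $\Nerve_T(C,\alpha)\cong P$. Here the identification of $\EM{T}$ with the algebras for the associated $i_{\A}$-relative monad, mentioned above, is what makes this reconstruction well-defined.

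The hard part will be the density step: reconciling the canonical colimit over the comma category $\Theta_T/X$ with the bar-resolution colimit of free algebras, since these are a priori diagrams of different shape, and it is exactly this reconciliation that forces one to invoke the arities condition in an essential way. A secondary difficulty is the reconstruction in the sufficiency half of the essential-image claim, where one must show that the $\A$-level data $\Nerve_{\A}C$ together with the full $\Theta_T$-presheaf $P$ assembles into a genuine $T$-algebra whose nerve recovers $P$.
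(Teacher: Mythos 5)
The paper offers no proof of \cref{thm:nerve} to compare against: the statement is imported verbatim from \cite{BMW2012} (Theorem 1.10) and carries a tombstone plus a citation rather than an argument. Measured against the proof in that reference, your outline follows essentially the same strategy: the structural identity $j_T^*\Nerve_T \cong \Nerve_{\A}U^T$ (your Yoneda computation of it is correct), density of $\Theta_T$ via the canonical coequaliser presentation $F^TTU^TX \rightrightarrows F^TU^TX \to X$ combined with canonical $\A$-colimit decompositions, and the essential-image characterisation read off from the same square.

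Two remarks on the difficulties you flag. First, the ``reconciliation of diagram shapes'' in the density step is less of an obstacle than you suggest: full faithfulness of $\Nerve_T$ at $X$ does not require the canonical $\Theta_T/X$-cocone to be colimiting a priori; it suffices that $X$ admit \emph{some} colimit presentation by objects of $\Theta_T$ that $\Nerve_T$ preserves, since then
\begin{equation*}
	\EM{T}(X,Z)\;\cong\;\lim \EM{T}(i_Tj_Ta,Z)\;\cong\;\lim \yhwidehat{\Theta_T}\bigl(\Nerve_Ti_Tj_Ta,\Nerve_TZ\bigr)\;\cong\;\yhwidehat{\Theta_T}\bigl(\Nerve_TX,\Nerve_TZ\bigr)
\end{equation*}
by the Yoneda lemma. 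The preservation is exactly where the arities hypothesis enters, and can be made precise as follows: $j_T^*$ is both a left and a right adjoint and is conservative, hence reflects colimits, so it is enough that $\Nerve_{\A}U^T$ preserve the two colimits in question; the coequaliser becomes split (hence absolute) after applying $U^T$, and the canonical $\A$-colimits are preserved because $\Nerve_{\A}U^TF^T=\Nerve_{\A}T$ preserves them by hypothesis. Second, the sufficiency half of the essential-image claim is where your sketch is thinnest and where the real content of the theorem lies: ``reconstruct $\alpha\:TC\to C$ from the action of $P$ on morphisms not coming from $\A$'' is the right idea, but defining $\alpha$ from the values of $P$ on generic morphisms, verifying the algebra axioms, and checking $\Nerve_T(C,\alpha)\cong P$ (rather than merely $j_T^*\Nerve_T(C,\alpha)\cong j_T^*P$) is a substantive argument in \cite{BMW2012} that your plan names but does not carry out.
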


Let $T$ be a strongly cartesian monad on a finitely complete category $\E$ with a dense generator $\A$.
The category of \emph{canonical arities} $\Ar{\A}{T}$ is defined as the essential image of the composition
\begin{equation*}
	\A/T\tOb{\E} \xlongrightarrow{i_{\A}/T\tOb{\E}} \E/T\tOb{\E} \xlongrightarrow{L_{\tOb{\E}}} \E,
\end{equation*}
where $L_{\tOb{\E}}\:\E/T\tOb{\E} \to \E$ is left adjoint to $T_{\tOb{\E}}\:\E \to \E/T1$.
A dense generator $\A$ of $\E$ is \emph{$T$-generically closed} if for any $T$-generic morphism $A \to TB$ with $A$ in~$\A$, there is an object isomorphic to $B$ which belongs to $\A$.

\begin{thm}[{\cite[Theorem 2.9]{BMW2012}}]\label{thm:scmag}
	Let $T$ be a strongly cartesian monad on a finitely complete category $\E$ with a dense generator $\A$.
	The category of canonical arities $\Ar{\A}{T}$ is $T$-generically closed and provides $T$ with arities $\Ar{\A}{T}$.\qed
\end{thm}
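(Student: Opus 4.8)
The plan is to derive both assertions from the calculus of $T$-generic morphisms made available by the strongly cartesian hypothesis. Two standard facts about a cartesian local right adjoint monad do the heavy lifting (see \cite{Weber2007}): every unit component $\nu_X\:X \vbto TX$ is $T$-generic, and $T$-generics are closed under the multiplication-mediated composition, so that if $g\:A \vbto TB$ and $h\:B \vbto TC$ are $T$-generic then so is $\mu_C\,T(h)\,g\:A \vbto TC$; the former uses that $\nu$ is cartesian and the latter that $\mu$ is cartesian. I would then reformulate the defining essential image through \cref{prop:lrac} at the terminal object: an object $B$ lies in $\Ar{\A}{T}$, up to isomorphism, exactly when there is a $T$-generic map $A \vbto TB$ with $A \in \A$, because the composite $A \vbto TB \xrightarrow{T!_B} T\tOb{\E}$ is then a generic factorisation of an object of $\A/T\tOb{\E}$ whose image under $L_{\tOb{\E}}$ is $B$.

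With this characterisation in hand, density of $\Ar{\A}{T}$ and $T$-generic closedness both become short. For each $A \in \A$ the unit $\nu_A\:A \vbto TA$ is $T$-generic, so $\A \subseteq \Ar{\A}{T}$; and a full subcategory of $\E$ containing a dense one is again dense, since $\Nerve_{\A}$ factors through $\Nerve_{\Ar{\A}{T}}$ and a routine check forces the latter to be fully faithful. For generic closedness, given a $T$-generic $h\:C \vbto TB$ with $C \in \Ar{\A}{T}$, the characterisation yields a $T$-generic $g\:A \vbto TC$ with $A \in \A$, and the composite $\mu_B\,T(h)\,g\:A \vbto TB$ is then $T$-generic with $A \in \A$, whence $B \in \Ar{\A}{T}$. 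This settles the first assertion.

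The substantive part is the arities condition. Writing $\B = \Ar{\A}{T}$, I must show $\Nerve_\B T$ preserves canonical $\B$-colimits; as colimits in $\Psh{\B}$ are pointwise and $\Nerve_\B$ itself preserves canonical $\B$-colimits (\cref{rmk:dencol}), this reduces to a natural isomorphism $\E(b, TX) \cong \colim_{b' \in \B/X} \E(b, Tb')$ for each $b \in \B$, the colimit being a canonical one and hence existing by density. The engine is generic factorisation: any $\phi\:b \to TY$ factors, uniquely up to the generic isomorphism, as $b \vbto TC \to TY$, and $T$-generic closedness (just proved) forces $C \in \B$; this gives a decomposition $\E(b, TY) \cong \coprod_{[g\:b \vbto TC]} \E(C, Y)$ that is natural in $Y$ and indexed by the ($Y$-independent) set of isomorphism classes of $T$-generics out of $b$. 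Taking $Y = X$ and $Y = b'$ and commuting the coproduct past the colimit reduces the goal to $\E(C, X) \cong \colim_{b' \in \B/X} \E(C, b')$, which holds because $C \in \B$ and $\Nerve_\B$ preserves the canonical colimit of $X$. I expect the main obstacle to be making this decomposition fully rigorous: one must check that the bijection is natural in $Y$ and, crucially, that the induced map $\colim_{b'}\E(b, Tb') \to \E(b, TX)$ is the canonical colimit comparison and not a merely abstract bijection. This is precisely the coherence supplied by \cref{lem:ColimPresGen,lem:ColimPresGenFact} (with \cref{rmk:cocom} covering the use of only the canonical colimits): they guarantee that the generic factorisations of the maps $b \to Tb'$ are stable along the canonical $\B$-colimit presentation of $X$, so that the pointwise identifications assemble into the comparison map and exhibit it as an isomorphism.
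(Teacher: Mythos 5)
The paper does not actually prove this statement---it is imported from \cite[Theorem 2.9]{BMW2012} and stated with a \qed---so the only comparison available is with the cited source. Your reconstruction is correct and follows essentially the same route as that proof: the closure of $T$-generics under units and under multiplication-mediated composition yields both $\A \subseteq \Ar{\A}{T}$ (hence density, since a small full subcategory containing a dense one is dense) and $T$-generic closedness, and the essentially unique generic factorisations decompose $\E(b, T\blank)$ as a coproduct of representables at objects of $\Ar{\A}{T}$, each of which preserves canonical colimits by density, giving the arities condition.
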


Let $\E$ be a category with a dense generator $\A$, a \emph{theory $(\Theta,j)$ with arities $\A$} on $\E$ consists of a bijective-on-objects functor $j\:\A \to \Theta$ such that the induced monad $j^*j_!$ on $\yhwidehat{\A}$ preserves the essential image of the nerve $\Nerve_{\A}\:\E \to \yhwidehat{\A}$.
A~morphism $g$ in $\Theta$ is called \emph{free} if it is in the image of $j$ and \emph{generic} if for each factorisation $g=j(f)g'$ through a free morphism, the latter is invertible in $\A$, see \cite[Section 3]{BMW2012}.

The category $\Theta_T$ defined above is an example of a theory with arities \cite[Proposition 3.2]{BMW2012}, and we will refer to it as the \emph{theory associated to the monad $T$}.

\begin{rmk}[{\cite[Theorem 3.4]{BMW2012}}]
	There is an equivalence between the category of monads with arities $\A$ and the category of theories with arities $\A$.
\end{rmk}

A theory $(\Theta,j)$ with arities $\A$ on $\E$ is called \emph{homogeneous} if $j$ is faithful and $\Theta$ admits a generic-free factorisation system, see just after Definition 3.9 in \autocite{BMW2012}.

\begin{thm}[{\cite[Theorem~3.10]{BMW2012}}]\label{thm:homog}
	Let $\E$ be a finitely complete category with a dense generator $\A$. For any strongly cartesian monad $T$, the associated theory $(\Theta_T,j_T)$ with arities $\Ar{\A}{T}$ is homogeneous.\qed
\end{thm}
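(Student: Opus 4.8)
The plan is to exploit the free--forgetful adjunction $F^T \dashv U^T$ to transport the whole problem from $\Theta_T$ back into $\E$, where the strongly cartesian structure of $T$ is directly usable. Since $\Theta_T$ is the full subcategory of $\EM{T}$ spanned by the free algebras $F^T A$ with $A \in \Ar{\A}{T}$, the adjunction yields a natural bijection between morphisms $F^T A \to F^T B$ in $\Theta_T$ and Kleisli maps $A \to TB$ in $\E$. Under this dictionary a free morphism $j_T(\phi)=F^T\phi$, for $\phi\in\Ar{\A}{T}(A,B)$, corresponds to $\nu_B\,\phi$ (by naturality of $\nu$), and the composite of a generic followed by a free one corresponds to a composite $A\xvbto{g} TX \xrightarrow{Tk} TB$ in $\E$ (using the monad law $\mu\,T\nu=\id$). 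The first thing I would prove is the identification on which everything rests: a Kleisli map is \emph{generic} in the theory-theoretic sense recalled just before \cref{thm:homog} if and only if it is $T$-generic in the sense of \cref{sec:lra}. Granting this, homogeneity reduces to two assertions: that $\phi\mapsto\nu_B\phi$ is injective, and that every Kleisli map admits an essentially unique $T$-generic factorisation with middle object in $\Ar{\A}{T}$.

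Next I would settle faithfulness of $j_T$, i.e.\ that $\nu_B$ is a monomorphism for every $B$. The map $\nu_{\tOb{\E}}\:\tOb{\E}\to T\tOb{\E}$ out of the terminal object is monic, since any two maps into a terminal object coincide. As $T$ is cartesian, $\nu$ is a cartesian natural transformation, so (in the spirit of \cref{lem:charCartesianMonad}) its naturality square at the terminal map $!_B\:B\to\tOb{\E}$ is a pullback exhibiting $\nu_B$ as the pullback of $\nu_{\tOb{\E}}$ along $T!_B$. Pullbacks of monomorphisms are monomorphisms, hence each $\nu_B$ is monic, and therefore $j_T$ is faithful.

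I would then build the generic--free factorisation system, with generics as the left class and frees as the right class. Existence of factorisations is immediate from \cref{prop:lrac}: being strongly cartesian, $T$ is a local right adjoint, so every Kleisli map $A\to TB$ factors as $A\xvbto{g} TX \xrightarrow{Tk} TB$ with $g$ a $T$-generic map; by \cref{thm:scmag} the arities $\Ar{\A}{T}$ are $T$-generically closed, so $X$ may be chosen in $\Ar{\A}{T}$, and the dictionary turns this into a generic-then-free factorisation in $\Theta_T$. That the frees are closed under composition and contain the isomorphisms is clear because $j_T$ is a bijective-on-objects functor; closure of the generics under composition follows by a short diagram chase from the defining lifting property of $T$-genericness.

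The main obstacle is the uniqueness half, namely the orthogonality of generics against frees, and this is precisely where cartesianness (beyond mere local right adjointness) is indispensable. I would translate the orthogonality requirement --- a unique diagonal filler in every commutative square with a generic on the left and a free on the right --- into the defining condition of a $T$-generic map $g\:A\vbto TX$ recalled in \cref{sec:lra}: applying $U^T$ to land in $\E$, the $T$-generic lifting of $g$ produces the unique $\delta$, after which one checks that $\delta$ underlies a $\Theta_T$-morphism, i.e.\ lifts back through the adjunction. The compatibility and uniqueness of these lifts are governed by the cartesianness of $\mu$ and $\nu$, whose pullback naturality squares ensure that the filler found in $\E$ is unique and respects the algebra structure; this is exactly what upgrades ``local right adjoint'' to a genuine orthogonal factorisation system. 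Assembling the factorisation, the closure properties, and this orthogonality yields the generic--free factorisation system, and together with faithfulness of $j_T$ this establishes that $(\Theta_T,j_T)$ is homogeneous.
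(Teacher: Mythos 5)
First, a point of comparison: the paper does not prove \cref{thm:homog} at all; it is imported verbatim from [BMW2012, Theorem~3.10] with a \qed, so your attempt has to be measured against that source rather than against an argument in the text. Your overall architecture --- transport everything through $F^T \dashv U^T$ into Kleisli maps $A \to TB$, identify theory-generics with $T$-generics, prove faithfulness of $j_T$ from cartesianness of $\nu$, and obtain existence of generic--free factorisations from \cref{prop:lrac} together with the $T$-generic closure of $\Ar{\A}{T}$ supplied by \cref{thm:scmag} --- is exactly the architecture of the cited proof. Your faithfulness argument is correct and complete: every morphism out of a terminal object is monic, $\nu_B$ is a pullback of $\nu_{\tOb{\E}}$ because $\nu$ is a cartesian natural transformation, and pullbacks of monomorphisms are monomorphisms.

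The genuine gap is in the orthogonality step. You propose to produce the diagonal filler by ``applying $U^T$ and the $T$-generic lifting of $g$'', but the defining square for $T$-genericness in \cref{sec:lra} requires \emph{both} the right-hand edge and the bottom edge to be in the image of $T$ (they are $R\gamma$ and $R\beta$ there). In an orthogonality square in $\Theta_T$ with a generic on the left and a free morphism $F^Tk$ on the right, the right edge does translate to $Tk$, but the bottom edge is an \emph{arbitrary} morphism $F^TB \to F^TD$, i.e.\ an arbitrary Kleisli map $\bar{t}\colon B \to TD$ whose Kleisli extension $\mu_D\,T\bar{t}$ is not of the form $T\beta$; so the lifting property cannot be invoked as stated. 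The repair (and what BMW2012 actually do) is to first factor $\bar{t}$ generically, use that Kleisli composites of generics are again generic --- a fact that genuinely requires cartesianness of $\mu$ and is considerably more than the ``short diagram chase'' you allot to closure under composition --- and then compare the two resulting generic--free factorisations of the common composite via the essential uniqueness of generic factorisations ([Weber2004, Lemma~5.7], which this paper cites elsewhere); the diagonal and its uniqueness are extracted from the comparison isomorphism. Your sketch gestures at ``cartesianness of $\mu$ and $\nu$'' as the responsible ingredient but does not identify this mechanism, and as written the crucial step would fail. (The preliminary identification of theory-generic with $T$-generic morphisms, which you correctly flag as needed, also deserves a proof; it is where faithfulness of $j_T$ actually gets used.)
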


%%% Local Variables:
%%% mode: latexmk
%%% TeX-master: "main"
%%% End:

\section{Presentations of fat Delta}\label{sec:fDeldef}
The category $\fDel$ was first introduced by J.\ Kock in \cite{Kock2006} through the concept of  ``coloured semiordinals'' but can also be viewed in various ways, as mentioned by J.\ Kock in the same work.
In this section, we review two descriptions introduced in \cite{Kock2006}.
We present the original description, useful for intuition, in terms of relative semiordinals (\cref{sec:rso}).
We then give another one, more practical, also defined in \cite{KS2017,Pao2025} in terms of a subcategory of the category of epimorphisms in $\Delta$ (\cref{sec:epifdel}).

In this paper, we make use of the intuition of the relative semiordinals, but we exclusively work with the definition of \cref{sec:epifdel}.
As a consequence of the results of \cref{sec:frelsem}, we will formally prove in \cref{thm:fDeldesc} that these two presentations are isomorphic.

The current section also introduces tools and notation to simplify the manipulation of the objects of $\fDel$, and two classes of morphisms (active and inert) which will be proven to form an orthogonal factorisation system on $\fDel$ in \cref{sec:nervethm}.

\subsection{Relative semiordinals}\label{sec:rso}
In \cite{Kock2006}, J.\ Kock focuses on a conceptual approach to $\fDel$ by defining it as the full subcategory of $\RelSemiCat$ consisting of finite non-empty ``coloured semiordinals''. We present this description here to provide intuition for the objects and morphisms of $\fDel$.

A \emph{semiordinal} is a semicategory associated with a total strict order relation. In particular, the relation is not reflexive, so there are no identity morphisms.
The category of finite non-empty semiordinals is naturally identified with the semisimplex category $\Delta_+$ (i.e.\ the simplex category $\Delta$ restricted to its monomorphisms), as the lack of identities allows only monomorphisms between semiordinals.
Thus, a \emph{relative semiordinal} is a semiordinal equipped with a wide subsemicategory.
The category $\fDel$ is defined as the category of finite non-empty relative semiordinals and marking-preserving (i.e.\ colour-preserving in~\cite{Kock2006}) functors between them.

\begin{rmk}
	Ordinals can also be described as the free categories on linearly ordered graphs.
	Similarly, relative semiordinals are the free relative semicategories on linearly ordered relative graphs.
	We will make this precise in \cref{sec:frelsem}.
\end{rmk}

Since $\fDel$ is characterised here as a full subcategory of $\RelSemiCat$, this description induces a fully faithful functor $\mathsf{rso}\: \fDel \hookrightarrow \RelSemiCat$.
This helps with intuition as we can represent objects of $\fDel$ as a string of arrows, some of which are marked, and maps as inclusions that respect the marking.
For instance, in $\fDel$ we have the usual semiordinals
\begin{equation}
	\langle\bullet\rangle, \quad 
	\langle\bullet \longrightarrow \bullet\rangle, \quad 
	\langle\bullet \longrightarrow \bullet \longrightarrow \bullet\rangle, \quad
	\langle\bullet \longrightarrow \bullet \longrightarrow \bullet \longrightarrow \bullet\rangle, \quad \ldots
\end{equation}
as well as partially marked semiordinals
\begin{equation}\label{diag:lrneut}
	\langle\bullet \mlgar \bullet \longrightarrow \bullet\rangle \quad \text{or} \quad
	\langle\bullet \longrightarrow \bullet \mlgar \bullet\rangle
\end{equation}
and fully marked semiordinals
\begin{equation}
	\langle\bullet \mlgar \bullet\rangle, \quad 
	\langle\bullet \mlgar \bullet \mlgar \bullet\rangle, \quad
	\langle\bullet \mlgar \bullet \mlgar \bullet \mlgar \bullet\rangle, \quad \ldots,
\end{equation}
where the red arrows encode the marking.
Morphisms between them are strictly monotone injections that map red arrows to red arrows.
This is a very intuitive way to think about $\fDel$, but it is not the most practical for computations.

\subsection{Epimorphisms in \texorpdfstring{$\Delta$}{Delta}}\label{sec:epifdel}
We now introduce the primary definition of $\fDel$ used in this paper.
This characterisation is more suitable for our purposes and studies.
Henceforth, we will use the notation $\fDel$ exclusively to refer to the category described in this section.

The category $\fDel$ is defined as $(\Arr{\Delta}_{-})_{\Mono}$, where $\Delta_-$ is the restriction of the simplex category to its epimorphisms.
Therefore, the objects are epimorphisms $\eta\:[m] \twoheadrightarrow [n]$ in $\Delta$, i.e.\ composition of degeneracies, and morphisms $f\:\eta_0 \to \eta_1$ are commutative squares in $\Delta$
\begin{equation}\label{diag:mofDel}
	\begin{tikzcd}
		{[m_0]}
			\ar[r, "\ftop{f}", hook]
			\ar[d, "\eta_0"', twoheadrightarrow]
		& {[m_1]}
			\ar[d, "\eta_1", twoheadrightarrow] \\
		{[n_0]}
			\ar[r, "\fbot{f}"']
		& {[n_1]}
	\end{tikzcd}
\end{equation}
whose vertical arrows are epimorphisms and whose horizontal top arrow is a monomorphism.
Intuitively, the domain of the epimorphisms corresponds to the length of the semiordinal and the marked edges are the ones that are sent to the identity.
For instance, objects as in \eqref{diag:lrneut} can be represented diagrammatically by the epimorphisms
\begin{equation*}
\begin{tikzcd}[column sep=0.2cm]
	\bullet
		\ar[d,mapsto]
		\ar[rr, thick, dash]
	&& \bullet
		\ar[dr,mapsto]
		\ar[rr, thick, dash, red]
	&& \bullet
		\ar[dl,mapsto] \\
	\bullet
		\ar[rrr, thick, dash]
	&&& \bullet
	&
\end{tikzcd}
\quad\text{and}\quad
\begin{tikzcd}[column sep=0.2cm]
	\bullet
		\ar[dr,mapsto]
		\ar[rr, thick, dash, red]
	&& \bullet
		\ar[dl,mapsto]
		\ar[rr, thick, dash]
	&& \bullet
		\ar[d,mapsto] \\
	& \bullet
		\ar[rrr, thick, dash]
	&&& \bullet
\end{tikzcd}
\end{equation*}
where a marked edge is collapsed to a single vertex.

\begin{rmk}\label{rmk:bodet}
	Given a map $f\:\eta_0 \to \eta_1$ in $\fDel$, as in \eqref{diag:mofDel} above, the bottom morphism $\fbot{f}$ is entirely determined by $\ftop{f}$, $\eta_1$ and any section $s$ of $\eta_0$.
	Indeed, we have $\fbot{f} = \eta_1\ftop{f}s$.
\end{rmk}

The restriction of the domain and codomain functors provides projections
\begin{equation*}
	\overline{\pi}\:\fDel \to \Delta_+ \quad \text{and} \quad \fat{\pi}\:\fDel \to \Delta,
\end{equation*}
respectively.
The functor $\fat{\pi}$ is significant as it induces a Dwyer-Kan equivalence between the homotopical categories $(\Delta,\Iso)$ and $(\fDel,\fVert)$, where $\fVert$ is the class of morphisms mapped to identities in $\Delta$ by $\fat{\pi}$ \cite{Sattler2017}.
In addition, $\fat{\pi}$ is an \emph{ambifibration}, which endows it with some nice lifting properties.
The literature~on ambifibrations is quite limited but, for more details, we refer to \cite[Section 5.3]{CSZ2026} and to the unpublished note \cite{Sattler2017} or to Kock's comment on $n$-Category Caf\'{e} \cite{KockCom}.

There is a `horizontal' inclusion
\begin{equation*}
	(\blank)^{\flat}\:\Delta_+ \hookrightarrow \fDel,
\end{equation*}
which maps a semiordinal $[n]$ to the corresponding identity morphism $\id_{[n]}$ and a monomorphism $[n] \hookrightarrow [n']$ to the evident square.
The objects in the image of the horizontal inclusion intuitively correspond to the relative semiordinals with nothing marked. The composite
\begin{equation*}
	\Delta_+ \xhookrightarrow{(\blank)^{\flat}} \fDel \xtwoheadrightarrow{\fat{\pi}} \Delta
\end{equation*}
factors the canonical inclusion $\Delta_+ \hookrightarrow \Delta$.
It is in this setting that one may consider $\fDel$ as a direct replacement of $\Delta$.
Indeed, $\fDel$ is a direct category and $(\fDel,\fVert)$ has the desired structure since $\fat{\pi}$ induces a Dwyer-Kan equivalence with $\Delta$.
This former property is of particular relevance in the context of HoTT-related frameworks, as it allows us to inductively construct the type of presheaves over the direct category, as discussed in \cref{sec:intrott}.
We also have a `vertical' inclusion
\begin{equation*}
	(\blank)^{\sharp}\:\Delta_+ \hookrightarrow \fDel
\end{equation*}
which maps a semiordinal $[n]$ to the canonical map $[n] \twoheadrightarrow [0]$ and a monomorphism $[n] \hookrightarrow [n']$ to the obvious square with the bottom map being the identity $\id_{[0]}$.
The objects in the image of this inclusion intuitively correspond to the relative semiordinals with everything marked.

\subsection{Active and inert morphisms}\label{sec:activeinert}
As we will show in \cref{thm:fDelhomo,cor:actinfDel} using the theory of monads with arities and by giving a direct proof, the category $\fDel$ also possesses a special orthogonal factorisation system $(\fDel_a,\fDel_0)$ called \emph{active-inert}.
This type of factorisation system usually appears in the context of the \emph{Segal conditions} and \emph{nerve functors} as it makes it possible to express the Segal conditions in terms of a restriction along the right class~\cite{BMW2012}.
This is thus an important tool to have to study higher categories. It was first introduced as a general abstract notion in category theory by M.\ Weber in \cite{Weber2004,Weber2007} after deriving ideas from A.\ Joyal and C.\ Berger \cite{Joyal1986,Ber2002}. It was further studied by C.\ Berger, P.A.\ Melli\`es and M.\ Weber \cite{BMW2012}, and was then called \emph{generic-free}.
The more recent terminology active-inert is due to J.\ Lurie \cite{LurieHA}.
This new terminology has been extensively used in recent publications \cite{CH2021,Ber2023,HK2022,ShapiroPhD} and especially in the context of \emph{decomposition spaces} in \cite{GKT2018a} and follow-ups.
In the simplicial case, the active and inert morphisms have a very neat characterisation: they correspond to endpoint and distance-preserving maps, respectively.
In $\fDel$ the characterisation is similar, with the difference that the maps respect the marking.
The classes $\fDel_a$ and $\fDel_0$ are given by
\begin{equation}\label{diag:actin}
\left\{
\begin{tikzcd}[column sep=1.2cm]
	{[m]}
		\ar[r, "\text{active}", hookrightarrow]
		\ar[d, twoheadrightarrow]
		\ar[dr, phantom, very near end, "{\ulcorner}"]
	& {[m']}
		\ar[d, twoheadrightarrow] \\
	{[n]}
		\ar[r]
	& {[n']}
\end{tikzcd}
\right\}
\quad \text{and} \quad
\left\{
\begin{tikzcd}[column sep=1.2cm]
	{[m]}
		\ar[r, "\text{inert}", hookrightarrow]
		\ar[d, twoheadrightarrow] &
	{[m']}
		\ar[d, twoheadrightarrow] \\
	{[n]}
		\ar[r] &
	{[n']}
\end{tikzcd}
\right\}
\end{equation}
respectively.
Note that active maps in $\fDel$ are required to form pushout squares in $\Delta$, which exist by \cite[Corollary 3.3]{CFPS2023}. 
This requirement ensures the uniqueness of the factorisation.
Indeed, the property of preserving endpoints must restrict to the decomposition of the codomain into intervals of consecutively marked or unmarked edges.
Otherwise, the following kind of issue arises:
\begin{equation*}
\begin{tikzcd}[column sep=0.2cm, font=\scriptsize, nodes={inner sep=1pt}]
	\langle 0
		\ar[rr, dash, thick, red] &&
	1
		\ar[rrrrd, "{(\delta_2,\delta_1)}"', bend right]
		\ar[rr, thick, dash] &&
	2\rangle
		\ar[rrrrrrrrrr, "{(\delta_2,\id_{[1]})}"] &&&&&&&&&&
	\langle0
		\ar[rr, dash, thick, red] &&
	1
		\ar[rr, dash, thick, red] &|[emptyNode]|\ar[from=llllld, "{(\id_{[3]},\sigma_0)}"', bend right]& 
	\bullet
		\ar[rr, thick, dash] &&
	2\rangle \\
	&&&&&&
	\langle 0
		\ar[rr, dash, thick, red] &&
	1
		\ar[rr, thick, dash] && 
	\bullet
		\ar[rr, thick, dash] &&
	2\rangle &&&&&&&&
\end{tikzcd}
\end{equation*}
The morphism $(\delta_2,\id_{[1]})$ pictured above, with the degeneracy ${\sigma_0\:[2] \twoheadrightarrow [1]}$ as its domain and ${\sigma_0\sigma_0\:[3] \twoheadrightarrow [1]}$ as its codomain, is an example of a morphism in $\fDel$ that has an active map at the top, namely ${\delta_2\:[2] \hookrightarrow [3]}$, but is not a pushout square.
It can be factored through the object ${\sigma_0\:[3] \twoheadrightarrow [2]}$ via the active map $(\delta_2,\delta_1)$ followed by the inert map $(\id_{[3]},\sigma_0)$.

\begin{rmk}\label{rmk:actbot}
	Given an active morphism of $\fDel$, the corresponding bottom map is active in $\Delta$, since left classes are preserved by pushouts.
	It is also a monomorphism by the following lemma.
\end{rmk}

\begin{lem}\label{lem:moprep}
	In $\Delta$, monomorphisms are preserved by pushouts along epimorphisms.
\end{lem}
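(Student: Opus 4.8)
The plan is to compute the pushout explicitly and read off injectivity of the bottom map. Write the mono as $m\colon[p]\hookrightarrow[q]$ and the epi as $e\colon[p]\twoheadrightarrow[r]$, and let $f\colon[r]\to[s]$ and $g\colon[q]\to[s]$ denote the legs of the pushout, with $f$ the map we must show is monic. Since $e$ is a monotone surjection, each fibre $e^{-1}(j)$ for $j\in[r]$ is a nonempty interval of $[p]$, and these intervals partition $[p]$ in the order inherited from $[r]$. I would set $a_j\coloneqq\min e^{-1}(j)$ and $b_j\coloneqq\max e^{-1}(j)$, so that $b_j+1=a_{j+1}$; applying the strictly monotone $m$ gives $m(b_j)<m(a_{j+1})$, whence the closed intervals $J_j\coloneqq[m(a_j),m(b_j)]\subseteq[q]$ are pairwise disjoint and occur in the order of $[r]$.

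First I would construct a candidate pushout. Define $g\colon[q]\twoheadrightarrow[s]$ to be the epimorphism that collapses each interval $J_j$ to a single point and is order-preserving elsewhere, and define $f\colon[r]\to[s]$ by sending $j$ to the point to which $J_j$ is collapsed. The square commutes because for every $i\in e^{-1}(j)$ we have $m(i)\in J_j$, hence $gm(i)=f(j)=fe(i)$. Crucially, because the $J_j$ are pairwise disjoint, their collapse points are distinct and ordered, so $f$ is strictly monotone, i.e.\ a monomorphism. It therefore only remains to verify that $(f,g)$ is genuinely the pushout of $m$ along $e$ in $\Delta$.

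The main point, and the step I expect to require the most care, is this universal property, since it is exactly what forces the collapse of the whole interval rather than a mere identification of the endpoints $m(a_j)$ and $m(b_j)$. Given a competing cocone $u\colon[q]\to[t]$, $v\colon[r]\to[t]$ with $um=ve$, I would argue that $u$ is constant on each $J_j$: for $i\in e^{-1}(j)$ we get $um(i)=ve(i)=v(j)$, so $u$ takes the value $v(j)$ on all of $\{m(i):i\in e^{-1}(j)\}$, and since $u$ is monotone and this set has minimum $m(a_j)$ and maximum $m(b_j)$, monotonicity squeezes $u$ to the constant value $v(j)$ on the whole of $J_j$. Hence $u$ factors through $g$ via a unique monotone $w\colon[s]\to[t]$ with $wg=u$ (uniqueness since $g$ is epic), and then $wfe=wgm=um=ve$ together with $e$ epic yields $wf=v$. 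This establishes the pushout, and as pushouts are unique up to isomorphism the bottom leg of the given square is isomorphic to $f$, hence monic.

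As an alternative to this one-shot computation, one could instead factor $e$ into elementary degeneracies $\sigma_k$, treat the case of a single $\sigma_k$ (where exactly one interval $J$ is collapsed), and propagate the conclusion along the factorisation using the pasting lemma for pushouts, each stage preserving monomorphisms by the elementary case.
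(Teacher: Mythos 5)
Your proof is correct, but it takes a different route from the paper. The paper's proof is a two-line reduction: since monomorphisms in $\Delta$ are composites of face maps and epimorphisms are composites of degeneracies, pushout pasting reduces the claim to the elementary case of a single face map pushed out along a single degeneracy, which is then delegated to an external citation (\cite[Corollary 3.11(b)]{CFPS2023}); this is essentially the alternative you sketch in your final paragraph. Your main argument instead computes the pushout in one shot: the fibres of the epimorphism $e$ are consecutive intervals of $[p]$, their images under the strictly monotone $m$ span pairwise disjoint intervals $J_j$ of $[q]$, and collapsing each $J_j$ yields the pushout, with the bottom leg $f$ visibly injective because the collapse points remain distinct. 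The key step --- that any competing cocone map $u$ is forced by monotonicity to be constant on all of $J_j$, not merely on the image points $m(i)$ --- is exactly right and is what makes the universal property go through. What your approach buys is self-containedness: it proves existence of the pushout (which the paper obtains from a separate citation) together with an explicit description of it, at the cost of a slightly longer verification; the one routine point you leave implicit is that the induced comparison map $w\colon[s]\to[t]$ is monotone, which follows since $g$ is a monotone surjection whose fibres are ordered consecutively and $u$ is monotone and constant on them.
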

\begin{proof}
	Since, in $\Delta$, monomorphisms are compositions of faces and epimorphisms are compositions of degeneracies, by pushout pasting it is enough to show that the pushout of a face map along a degeneracy is a monomorphism.
	This is a direct consequence of \cite[Corollary 3.11 (b)]{CFPS2023}.
\end{proof}

\begin{rmk}\label{rmk:mcontract}
	Given an inert morphism in~$\fDel$, the commutativity of the square forces the bottom arrow $f$ to have the following property: $f(i+1) \leq f(i) + 1$, for $0\leq i \leq n-1$.
\end{rmk}

\cref{lem:3.7} records an observation about maps in $\Delta$ having this property that is not required for this paper but may be of independent interest.
For that, we introduce the \emph{relative $\vpro$\nobreakdash-product}.
Recall from~\cite[Definition 2.6]{CFPS2023} that the $\vpro$-product of $[n]$ and $[n']$ is defined by the pushout
\begin{equation*}
\begin{tikzcd}
	{[0]}
		\ar[r, "n"]
		\ar[d, "0"']
		\ar[dr, phantom, very near end, "{\ulcorner}"]
	& {[n]}
		\ar[d] \\
	{[n']}
		\ar[r]
	& {[n] \vpro [n']}
\end{tikzcd}
\end{equation*}
In particular, $[n] \vpro [n'] =[n+n']$.
As the $\vpro$-product is functorial on active morphisms of $\Delta$, \cite[Definition 2.6]{CFPS2023}, it induces an operation on objects of $\fDel$ (epimorphisms in $\Delta$ are always active), denoted by $\vpro$ as well, and called the relative $\vpro$-product.
Given two objects $\eta\:[m] \twoheadrightarrow [n]$ and $\eta'\:[m'] \twoheadrightarrow [n']$ of $\fDel$, the relative semiordinal
\begin{equation*}
  \eta\vpro\eta'\:[m]\vpro[m'] \twoheadrightarrow [n]\vpro[n']
\end{equation*}
is obtained by gluing $\eta'$ to the maximal element of $\eta$.
Intuitively, the relative $\vpro$-product of two objects $\eta$ and $\eta'$ in $\fDel$ connects the last vertex of $\eta$ directly to the first vertex of $\eta'$. For instance, in the case of \eqref{diag:lrneut}, we have
\begin{equation*}
	\langle\bullet \longrightarrow \bullet \mlgar \bullet\rangle \vpro \langle\bullet \mlgar \bullet \longrightarrow \bullet\rangle =
	\langle\bullet \longrightarrow \bullet \mlgar \bullet \mlgar \bullet \longrightarrow \bullet\rangle.
\end{equation*}

\begin{rmk}\label{rmk:indec}
By decomposing the codomain, any inert map $[m] \hookrightarrow [m']$ in~$\Delta$ is a canonical inclusion of the interval $[m] \hookrightarrow [a]\vpro[m]\vpro[b]$.
Moreover, this extends to inert maps in $\fDel$.
Specifically, if $\eta \to \eta'$ is inert, it is given by a commutative square
\begin{equation*}
\begin{tikzcd}
	{[m]}
	\ar[r, hook]
	\ar[d, "\eta"', twoheadrightarrow]
	& {[a]\vpro[m]\vpro[b]}
	\ar[d, "\eta'", twoheadrightarrow] \\
	{[n]}
	\ar[r]
	& {[a]\vpro[m']\vpro[b]}
\end{tikzcd}
\end{equation*}
where $m'\leq m$ and the image of the bottom arrow restricts to $[m']$.
\end{rmk}

\begin{lem}\label{lem:3.7}
	A map $f\:[m] \to [n]$ in $\Delta$ has the property $f(i+1) \leq f(i) + 1$ for $0\leq i \leq m-1$ if and only if its epi-mono factorisation coincides with its active-inert factorisation.
\end{lem}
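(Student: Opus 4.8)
The plan is to compute both factorisations of $f$ explicitly and reduce the claimed equivalence to a single combinatorial condition on $\mathrm{im}(f)$. Recall that in $\Delta$ the epi--mono factorisation writes $f = \iota p$, with $p\colon[m]\twoheadrightarrow[k']$ the surjection onto the image and $\iota\colon[k']\hookrightarrow[n]$ the inclusion of $\mathrm{im}(f)$, where $k' = \lvert\mathrm{im}(f)\rvert - 1$; and, since in the simplicial case active and inert maps are the endpoint- and distance-preserving maps respectively, the active--inert factorisation writes $f = g h$ with $h\colon[m]\to[f(m)-f(0)]$ given by $h(i)=f(i)-f(0)$ and $g\colon[f(m)-f(0)]\hookrightarrow[n]$ given by $g(j)=j+f(0)$. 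Throughout I use that $f$ is monotone, so $\mathrm{im}(f)\subseteq\{f(0),\dots,f(m)\}$.

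The one genuinely useful observation is that \emph{every epimorphism of $\Delta$ is active}: a surjection $p\colon[m]\twoheadrightarrow[k']$ is monotone and hits both endpoints, so $p(0)=0$ and $p(m)=k'$, which is exactly the endpoint-preserving condition. Hence the epi--mono factorisation is always a factorisation of $f$ into an active map followed by a monomorphism. I would then argue that, because active--inert factorisations in $\Delta$ are unique, the epi--mono factorisation coincides with the active--inert one \emph{precisely when its mono part $\iota$ is inert}: if $\iota$ is inert then $f=\iota p$ is an active-then-inert factorisation, which uniqueness identifies with $(h,g)$; conversely, if the two coincide then $\iota=g$ is inert.

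It then remains to characterise when $\iota$ is inert. The monomorphism $\iota$ is distance-preserving exactly when its image $\mathrm{im}(f)$ is a set of consecutive integers, i.e.\ an interval. Since $\mathrm{im}(f)\subseteq\{f(0),\dots,f(m)\}$, this is an interval iff $f$ attains every intermediate value, and a value is skipped precisely when $f(i+1)>f(i)+1$ for some $i$; so $\iota$ is inert iff $f(i+1)\le f(i)+1$ for all $i$, which is the stated property. Chaining the two equivalences yields the lemma, and no $\vpro$-product machinery is needed for this route.

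The computation is routine; the only point requiring care is organisational, namely pinning down that ``coincide'' means equality of the middle object \emph{and} of the two component maps, rather than mere agreement of type. I would settle this directly: when the no-skip condition holds both middle objects equal $[f(m)-f(0)]$, and the explicit formulas match on the nose, since $p(i)=f(i)-f(0)=h(i)$ and $\iota(j)=f(0)+j=g(j)$. This explicit comparison also makes the argument self-contained, so it does not strictly depend on invoking uniqueness of the active--inert factorisation system abstractly.
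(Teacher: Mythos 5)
Your proof is correct and follows essentially the same route as the paper's: both reduce the statement to the observation that every epimorphism of $\Delta$ is automatically active, so the two factorisations agree precisely when the mono part of the epi--mono factorisation is inert, i.e.\ when $\mathrm{im}(f)$ is an interval, which is exactly the no-skip condition. The only difference is presentational --- the paper expresses the inert inclusion via the $\vpro$-product decomposition $[k]\hookrightarrow[a]\vpro[k]\vpro[b]$, whereas you write out explicit formulas for both factorisations, which also settles on-the-nose equality rather than appealing to uniqueness of the factorisation system.
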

\begin{proof}
	Suppose $f$ has the property, then the image $[k]$, with $k\leq m$, is an interval in $[n]$ since either $f(i+1)=f(i)$ or $f(i+1)=f(i)+1$ (by order preservation).
	In particular, the epi-mono factorisation of $f$ is of the form
	\begin{equation}\label{diag:epimono}
		[m] \twoheadrightarrow [k] \hookrightarrow [a] \vpro [k] \vpro [b].
	\end{equation}
	The first map is automatically active, and the second is inert (see \cref{rmk:indec}).
	Conversely, if the epi-mono factorisation of $f$ coincides with the active-inert factorisation then the first map in \eqref{diag:epimono} is an active surjection.
	That is, the image of $f$ is an interval in $[n]$ which forces $f$ to have the desired property.
\end{proof}

%%% Local Variables:
%%% mode: latexmk
%%% TeX-master: "main"
%%% End:

\section{Fat Delta via monads with arities}\label{sec:frelsem}
In this section, we use the theory of monads with arities extensively.
We begin by constructing the semicategory monad $\free$ and the free relative semicategory monad $\rfree$ on the categories of graphs and relative graphs, respectively.
We show that $\rfree$ is strongly cartesian by using the result that $\free$ is also strongly cartesian.
Following this, we develop the category of arities for $\rfree$ and make precise its connection to $\fDel$.

\subsection{The free semicategory monad}\label{sec:fscm}
We denote by $\Graph$ the category of \emph{directed graphs} (also known as quivers) and graph morphisms.
More precisely, $\Graph$ is the presheaf category over the truncated globe category $\mathbb{G}_{1}$, see \cite[Chapter 5]{Leinster2004}.
Since we have a fully faithful embedding $i\:\mathbb{G}_{1} \hookrightarrow \SemiCat$, we get a forgetful functor $\sForget \: \SemiCat \to \Graph$ by mapping a semicategory $\C$ to the graph $\SemiCat(i,\C)$.
Taking the left Kan extension along the Yoneda embedding we get the free-forgetful adjunction \autocite[Proposition 3.2.2]{Loregian2021}:
\begin{equation}\label{diag:fuadj}
	\adjunction{\sFr}{\Graph}{\SemiCat}{\sForget}.
\end{equation}
An explicit description of the functor $\sFr$ is given by the \emph{path semicategory} of a graph.
That is, for any graph $X$, the semicategory $\sFr X$ is the semicategory with the same collection of objects $X_0$ as $X$ and a morphism between two vertices $u$ and $v$ is a path $[n] \to X$, where $[n]$ denotes the linearly ordered graph of length $n$, starting at $u$ and ending at $v$, for $n \geq 1$.
We denote by $\Path_n X$ the hom-set $\Graph([n],X)$ of paths of length $n$ in $X$, and by $\Path X$ the set $\bigsqcup_{n\geq 1} \Path_n X$ of paths of any length.
The composition of two paths $\pathp$, $\pathq \in \Path X$ is given by the \emph{concatenation} $\pathp\vpro\pathq$.

\begin{rmk}\label{rmk:concat}
	Suppose that $\pathp$ and $\pathq$ are of length $n$ and $m$, respectively. Concatenation of paths in $\Graph$ is constructed by the pushout
	\begin{equation*}
	\begin{tikzcd}
		{[0]}
			\ar[r, "n"]
			\ar[d, "0"']
			\ar[dr, phantom, very near end, "{\ulcorner}"]
		& \pathp
			\ar[d] \\
		\pathq
			\ar[r]
		& \pathp\vpro\pathq
	\end{tikzcd}
	\end{equation*}
	Note that concatenation is denoted by the same symbol as the $\vpro$-product for $\Delta$ in \cref{sec:activeinert}.
	This is not a coincidence as they are related: if $\Fr$ is the free category functor, then $\Fr(\pathp \vpro \pathq)={\Fr\pathp} \vpro {\Fr\pathq}$. However, the context will make it clear which one is being used.
\end{rmk}

This adjunction gives rise to the \emph{free semicategory monad} $\free = (\sForget\sFr,\sForget\epsilon_{\sFr},\eta)$ on $\Graph$, where $\eta$ and $\epsilon$ are the unit and counit of the adjunction.
We will denote by $\nu$ and $\mu$ the unit and multiplication of the monad $\free$.

\begin{rmk}\label{rmk:free}
	Let $G$ be in $\Graph$, the graph $\free G$ is the underlying graph of the path semicategory. It has $G_0$ as its set of vertices and $(\free G)_1= \Path G$.
	In particular, if $\gamma\:G \to G'$ is a morphism in $\Graph$, the morphism $\free \gamma$ turns a path $\pathp \in \Path_n G$ into a path $\gamma \pathp\:[n] \to G'$ by post-composing with $\gamma$.
	The unit $\nu_G\:G \to \free G$ sends an edge $g$ in $G$ to the corresponding path $\freem{g}\:[1] \to G$.
	The edges in $\free\free G$ are paths of paths $\pathp\:[n] \to \free G$. Thus, using the counit of the adjunction \eqref{diag:fuadj}, the multiplication $\mu_G\:\free\free G \to \free G$ can be represented as mapping $\pathp$ to the concatenation $\pathp_1 \vpro\cdots\vpro \pathp_n \in \Path \free G$.
\end{rmk}

\begin{prop}\label{prop:freeCart}
	The monad $\free\:\Graph \to \Graph$ is cartesian.
\end{prop}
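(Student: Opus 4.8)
The plan is to apply \cref{lem:charCartesianMonad}, since $\Graph$ is a presheaf category and hence has all finite limits together with a terminal object $\tOb{}$, namely the graph with a single vertex $\star$ and a single loop. It therefore suffices to prove three things: that $\free$ preserves pullbacks, and that the naturality squares of $\nu$ and $\mu$ at the terminal maps $!_X\:X \to \tOb{}$ are cartesian. Throughout I will use that $\free$ is the identity on vertices and that on edges it is the length-graded sum $\Path = \bigsqcup_{n \geq 1}\Graph([n],\blank)$, that $(\free\tOb{})_1 \cong \mathbb{N}_{\geq 1}$ records the length of a path (there being a unique path of each length in $\tOb{}$), and that $\free!_X$ sends a path to its length.

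For preservation of pullbacks, recall that limits in $\Graph$ are computed pointwise, so it is enough to treat the vertex and edge components separately. On vertices $\free$ is the identity, so there is nothing to check. On edges, each $\Graph([n],\blank)$ is a hom-functor and hence preserves the pullback; since the induced maps preserve the length grading, the pullback of the graded edge-sets is computed grade by grade, and summing over $n \geq 1$ shows that $\Path$ preserves it. Concretely, a path in a pullback $A \times_C B$ is exactly a pair of paths of equal length in $A$ and $B$ agreeing in $C$, which is the defining property of $\Path A \times_{\Path C} \Path B$.

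For the unit, the naturality square at $!_X$ is the identity on vertices, and on edges it exhibits $X_1$ as the pullback of $\nu_{\tOb{}}\:(\tOb{})_1 \to (\free\tOb{})_1$ along $\free!_X\:\Path X \to (\free\tOb{})_1$: the single edge of $\tOb{}$ is sent by $\nu_{\tOb{}}$ to the path of length $1$, so the pullback selects exactly the paths of $X$ of length $1$, i.e.\ the image of $\nu_X\:X_1 \cong \Path_1 X$. Hence this square is cartesian.

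The multiplication square is the crux. Here $(\free\free\tOb{})_1$ is the set of nonempty finite sequences $(n_1,\dots,n_k)$ of positive integers (a path in $\free\tOb{}$, recording its successive edge-lengths), and $\mu_{\tOb{}}$ sends such a sequence to its sum $n_1 + \cdots + n_k$, the length of the concatenation. An element of the pullback of $\mu_{\tOb{}}$ along $\free!_X$ is therefore a path $f$ of $X$ of some length $N$ together with a composition $N = n_1 + \cdots + n_k$. The comparison map sends a path of paths $(\pathp_1,\dots,\pathp_k)$ in $X$ to the pair $\bigl((\lvert\pathp_1\rvert,\dots,\lvert\pathp_k\rvert),\, \pathp_1 \vpro \cdots \vpro \pathp_k\bigr)$, and its inverse is given by the observation that a path $f$ of length $N$ admits a \emph{unique} decomposition into consecutive subpaths of prescribed lengths $n_1,\dots,n_k$ summing to $N$. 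This unique cuttability is precisely what makes the comparison map a bijection, and it is the main point of the proof; the vertex component is again trivial. Consequently the square is cartesian, and by \cref{lem:charCartesianMonad} the monad $\free$ is cartesian.
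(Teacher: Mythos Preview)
Your proof is correct and follows essentially the same approach as the paper: both invoke \cref{lem:charCartesianMonad}, verify pullback preservation via the length-graded description $\Path = \bigsqcup_{n\geq 1}\Graph([n],\blank)$ on edges, and then check the unit and multiplication squares at $\tOb{}$ by identifying $(\free\free\tOb{})_1$ with finite sequences of positive integers and observing that a path together with an ordered partition of its length is the same as a path of paths. Your phrasing of pullback preservation via representables preserving limits is slightly more abstract than the paper's direct calculation, but the content is identical.
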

\begin{proof}
	The category $\Graph$ has pullbacks, as it is a presheaf category. Let us first show that $\free$ preserves pullbacks. For a cospan $G \to K \leftarrow H$ in $\Graph$ we get by definition that
	$ \free(G \times_K H)_0 = (G \times_K H)_0 = G_0 \times_{K_0} H_0 = \free(G)_0 \times_{\free(K)_0} \free(H)_0$,
	and that $\free(G \times_K H)_1 = \Path (G \times_K H)$. The universal property of pullbacks and a direct calculation gives us the natural isomorphisms
	\begin{equation*}
		\Path (G \times_K H) \cong \bigsqcup_{n\geq 1}\Path_n G \times_{\Path_n K} \Path_n H
	\cong \Path G\times_{\Path K} \Path H.
	\end{equation*}
	Thus, $\free(G \times_K H)_1=\free(G)_1 \times_{\free(K)_1} \free(H)_1$ and the functor $\free\:\Graph \to \Graph$ preserves pullbacks.
	Using \cref{lem:charCartesianMonad}, it remains to show that the naturality squares
	\begin{equation*}
	\begin{tikzcd}
		G
		\ar[r,"\nu_G"]
		\ar[d,"!_G"'] &
		\free G
		\ar[d,"\free !_G"] \\
		\tOb{\Graph}
		\ar[r,"\nu_{\tOb{\Graph}}"'] &
		\free \tOb{\Graph}
	\end{tikzcd}
	\quad
	\text{and}
	\quad
	\begin{tikzcd}
		\free \free G
		\ar[r,"\mu_G"]
		\ar[d, "\free \free !_G"'] &
		\free G
		\ar[d,"\free !_G"] \\
		\free \free \tOb{\Graph}
		\ar[r, "\mu_{\tOb{\Graph}}"'] &
		\free \tOb{\Graph}
	\end{tikzcd}
	\end{equation*}
	for the unit and for the multiplication are cartesian, for any graph $G$. The first square is cartesian because $G$ is exactly the fibre of $\free !_G$ over the path corresponding to the loop in $\tOb{\Graph}$.
	The pullback of the cospan of the right square, $\free \free \tOb{\Graph} \times_{\free\tOb{\Graph}} \free G$, has $G_0$ as its set of objects and $\Path \free \tOb{\Graph}\times_{\Path \tOb{\Graph}} \Path G$ as its set of arrows.
	Note that, since $\free \tOb{}$ can be viewed as the set of strictly positive natural numbers, an edge $k \in \free\free\tOb{}$ corresponds to a non-empty finite sequence of strictly positive natural numbers $(k_1, \ldots, k_n)$.
	Thus, by \cref{rmk:free}, $\mu_{\tOb{}}$ maps $k$ to the sum $k_1+\cdots+k_n$, and $\free !_G$ maps a path $\pathp\in \Path G$ to its length~$l_{\pathp}$.
	So, the set of arrows corresponds to pairs $(k,\pathp) \in \Path\free\tOb{\Graph} \times \Path G$ such that $l_{\pathp} = k_1+\cdots+k_n$.
	This can be rewritten as the set of paths in $G$ equipped with a partition, which then just corresponds to the set of paths in $\free G$.
	In other words, $\free \free \tOb{\Graph} \times_{\free \tOb{\Graph}} \free G \cong \free\free G$ and the square is cartesian.
\end{proof}

Denote by $\genm{\bfm{n}}\:[1] \to \free [n]$ the map that picks the maximal path in $[n]$.
More precisely, if we write $01$ for the only edge of $[1]$ then $\genm{\bfm{n}}(01)=\id_{[n]}$.

\begin{lem}\label{lem:maxpathGeneric}
	The maps $\genm{\bfm{n}}\:[1] \vbto \free [n]$ are $\free$-generic, for all $n \geq 1$.
\end{lem}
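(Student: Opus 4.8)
The plan is to unwind the definition of $\free$-genericness in this concrete situation and to exhibit the filler explicitly. The key translation is that an edge of $\free X'$ is precisely a path in $X'$, that is, a graph morphism $[m] \to X'$ for some $m \geq 1$; consequently a graph morphism $\alpha\:[1] \to \free X'$ is the same datum as a single path $\pathp \in \Path X'$, namely $\pathp = \alpha(01)$, whose endpoints recover $\alpha(0)$ and $\alpha(1)$. Under this identification, $\genm{\bfm{n}}\:[1] \vbto \free [n]$ corresponds to the maximal path $\id_{[n]} \in \Path_n [n]$, the map $\free\gamma$ sends a path $\pathp$ to $\gamma\pathp$, and $\free\beta$ sends a path $\mathsf{r}\:[k]\to [n]$ to $\beta\mathsf{r}$, as recorded in the description of $\free$.

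So suppose we are given $\alpha$, $\beta$, $\gamma$ making the defining square for genericness commute, with $\gamma\:X' \to Y$ and $\beta\:[n] \to Y$. Writing $\pathp := \alpha(01)$, commutativity evaluated on the unique edge $01$ of $[1]$ reads $\gamma\pathp = \free\beta(\genm{\bfm{n}}(01)) = \free\beta(\id_{[n]}) = \beta$. Since equal paths have equal length and $\beta$ (as a graph morphism out of $[n]$) has length $n$, the path $\pathp$ must itself have length exactly $n$; that is, $\pathp$ is a genuine morphism $[n] \to X'$ in $\Graph$. This is the crucial point: the maximality of $\genm{\bfm{n}}$ forces the factoring path to have length $n$, making it a candidate filler.

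I would then set $\delta := \pathp\:[n] \to X'$ and verify the two required equations. The lower triangle $\gamma\delta = \beta$ is exactly the relation just derived, and it holds before applying $\free$, as required. For the other equation one computes $\free\delta(\genm{\bfm{n}}(01)) = \free\delta(\id_{[n]}) = \delta = \pathp = \alpha(01)$, and checks agreement on vertices using that $\genm{\bfm{n}}$ sends $0,1$ to $0,n$; hence $\free\delta\,\genm{\bfm{n}} = \alpha$. Uniqueness is immediate, since any filler $\delta'$ satisfies $\delta' = \free\delta'(\id_{[n]}) = \free\delta'(\genm{\bfm{n}}(01)) = \alpha(01) = \pathp$, forcing $\delta' = \delta$.

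There is no serious obstacle here: the proof is a direct unwinding of definitions. The only step requiring care—and the conceptual heart of the statement—is the length-matching observation, namely that commutativity pins down the length of $\alpha(01)$ to be $n$. This is precisely what makes the \emph{maximal} path $\genm{\bfm{n}}$ generic, whereas a proper subpath would not admit a unique lift; everything else is routine bookkeeping on the vertices and edges of the graphs involved.
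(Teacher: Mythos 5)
Your proof is correct and follows essentially the same route as the paper's: identify $\alpha$ with the path $\pathp=\alpha(01)$, use commutativity on the unique edge to get $\gamma\pathp=\beta$, deduce from length-matching that $\pathp$ is a morphism $[n]\to X'$ serving as the filler, and derive uniqueness from the top triangle. Your write-up is somewhat more explicit about the vertex-level checks, but the argument is the same.
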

\begin{proof}
	Consider $\alpha$, $\beta$, and $\gamma$ making the following square commute:
	\begin{center}
	\begin{tikzcd}
		{[1]}
		\ar[r,"\alpha"]
		\ar[d,"\genm{\bfm{n}}"', mid vert] &
		\free X
		\ar[d,"\free \gamma"] \\
		\free [n]
		\ar[r, "\free \beta"'] &
		\free Y
	\end{tikzcd}
	\end{center}
	The map $[1] \xrightarrow{\alpha} \free X$ picks a path $\langle\alpha\rangle$ in $X$. The commutativity of this square ensures that $\gamma \langle\alpha\rangle=\beta$.
	Moreover, since $\beta$ is of length $n$, the path $\langle\alpha\rangle\:[n] \to X$ is of length $n$ as well.
	In particular, the fact that it is of length $n$ induces a factorisation of $\alpha$ as $\free\langle\alpha\rangle\,\genm{\bfm{n}}$.
	Consequently, we can choose $\langle\alpha\rangle$ as an $\free$-filler of the square.
	It is unique because, for any other path $p\:[n] \to X$, the commutativity of the top triangle enforces $p=\langle\alpha\rangle$.
\end{proof}

The following result is the analogue of \cite[Example 2.5]{Weber2007} for the case of semicategories instead of categories.

\begin{prop}\label{prop:freeStrgCart}
	The functor $\free\:\Graph \to \Graph$ is a local right adjoint.
\end{prop}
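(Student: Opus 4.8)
The plan is to deduce this from \cref{prop:LRAdenseGen}, taking $\E = \F = \Graph$ and $R = \free$. Since $\Graph$ is a presheaf category over $\mathbb{G}_1$ it is cocomplete, and its representables form a dense generator $\A = \{[0],[1]\}$, where $[0]$ is the single vertex and $[1]$ the single edge; moreover the terminal object $\tOb{\Graph}$ is the graph with one vertex and one loop. By \cref{prop:LRAdenseGen} it then suffices to exhibit, for each $A \in \{[0],[1]\}$, an $\free$-generic factorisation of every map $f\:A \to \free\tOb{\Graph}$. So the whole statement reduces to checking two cases over the terminal object.

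For $A = [0]$, there are no paths of positive length in $[0]$, hence $\free[0] = [0]$ and the unit $\nu_{[0]}\:[0] \to \free[0]$ is the identity. As $\free$ acts as the identity on vertex sets, lifting a vertex through it is trivially unique, so this identity is $\free$-generic; composing with $\free!_{[0]}$ gives the desired factorisation. For $A = [1]$, recall (as in \cref{rmk:free} and the proof of \cref{prop:freeCart}) that the edges of $\free\tOb{\Graph}$ are precisely the paths of $\tOb{\Graph}$, indexed by their length $n \geq 1$. Thus a map $f\:[1] \to \free\tOb{\Graph}$ selects the unique path of some length $n$, and I would factor it as
\[
	[1] \xvbto{\genm{\bfm{n}}} \free[n] \xrightarrow{\free !_{[n]}} \free\tOb{\Graph},
\]
where $\genm{\bfm{n}}$ picks the maximal path of $[n]$. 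The first map is $\free$-generic by \cref{lem:maxpathGeneric}, and the composite recovers $f$ because $\free !_{[n]}$ sends the maximal path $\id_{[n]}$ to the length-$n$ path of $\tOb{\Graph}$. This is the required $\free$-generic factorisation.

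The essential content lies entirely in the reduction supplied by \cref{prop:LRAdenseGen}: it lets us verify the local-right-adjoint property only on the dense generator and only over $\tOb{\Graph}$, after which both cases are immediate and the $A=[1]$ case rests wholly on the genericness of the maximal-path maps from \cref{lem:maxpathGeneric}. I do not anticipate a genuine obstacle. The one point to state carefully is that the middle object $[n]$ of the factorisation need not lie in $\A$, which is harmless since \cref{prop:LRAdenseGen} only demands the existence of an $\free$-generic factorisation, not one factoring through the generator.
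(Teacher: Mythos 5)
Your proof is correct and follows essentially the same route as the paper: reduce via \cref{prop:LRAdenseGen} (the paper also cites Weber's Proposition 2.10 for presheaf categories) to generic factorisations of maps from $[0]$ and $[1]$ into $\free\tOb{\Graph}$, handle $[0]$ via $\free[0]=[0]$, and handle $[1]$ via the maximal-path maps $\genm{\bfm{n}}$ and \cref{lem:maxpathGeneric}. Your closing remark that the middle object $[n]$ need not lie in the generator is a correct and worthwhile clarification.
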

\begin{proof}
	The category of graphs $\Graph$ is a presheaf category, so by \cite[Proposition 2.10]{Weber2007} (or its generalisation \cref{prop:LRAdenseGen}) it suffices to show that the maps $[0]\to \free\tOb{\Graph}$ and $[1]\to \free\tOb{\Graph}$ have $\free$-generic factorisations.
	Since $[0]=\free [0]$ the first map has a trivial $\free$-generic factorisation
	\begin{equation*}
		[0] \xlongequal{} \free [0] \to \free \tOb{\Graph}.
	\end{equation*}
	The second map $\bfm{n}\:[1] \to \free\tOb{\Graph}$ picks a path in $\tOb{\Graph}$, that is a graph morphism $\freem{\bfm{n}}\:[n] \to \tOb{\Graph}$, where $n$ is the length of the path. The morphism $\bfm{n}$ factors as
	\begin{equation*}
		[1] \xvblongto{\genm{\bfm{n}}} \free[n] \xlongrightarrow{\free \freem{\bfm{n}}} \free\tOb{\Graph}.
	\end{equation*}
	This is an $\free$-generic factorisation by \cref{lem:maxpathGeneric}.
\end{proof}

In particular, the monad $\free$ is strongly cartesian.
Using \cref{thm:nerve,thm:scmag}, we can deduce the nerve theorem for $\Delta_+$ and $\SemiCat$.

\subsection{The free relative semicategory monad}\label{sec:frscm}
The category of \emph{relative graphs} $\RelGraph$ consists of directed graphs $\carrier{X}$ equipped with a wide subgraph, represented by a bijective-on-objects monomorphism $i_X\:\marking{X}\hookrightarrow \carrier{X}$.
 We will often identify the map $i_X$ with the relative graph $X = (\carrier{X},\marking{X})$.
In other words, the category $\RelGraph$ corresponds to the full subcategory $\Arr{\Graph}_{\Mbo} \hookrightarrow \Arr{\Graph}$, where $\Mbo$ denotes the class of bijective-on-objects monomorphisms.

\begin{rmk}\label{rmk:sepgra}
	The category $\RelGraph$ can equivalently be described as the category of separated presheaves (see \cite[Section C.2.2]{Elephant2} for separated presheaves) over the category $\marking{\mathbb{G}}_1$.
	This category consists of $\mathbb{G}_1$ but with one more object $[\marking{1}]$ and a morphism $[1] \to [\marking{1}]$, equipped with the coverage whose only non-trivial covering family is $\left\{ [1] \to [\marking{1}] \right\}$.
\end{rmk}

\begin{rmk}\label{rmk:adjrg}
	Similarly to \cref{rmk:adjrsc}, we have a quadruple adjunction
	\begin{equation*}
		(\blank)^{\flat} \dashv \carrier{(\blank)}  \dashv (\blank)^{\sharp} \dashv \marking{(\blank)}
	\end{equation*}
	where the forgetful functors $\carrier{(\blank)},\marking{(\blank)}\:\RelGraph \to \Graph$ have left adjoints obtained by Yoneda extensions, and also denoted by $(\blank)^{\sharp}$ and $(\blank)^{\flat}$.
	They respectively mark all edges or none of them.
\end{rmk}

As a consequence of \cref{prop:freeCart}, the functor $\free$ preserves monomorphisms and since $\free$ is the identity on the set of vertices it also preserves the class $\Mbo$.
Thus, as explained in \cref{sec:monar}, the monad $\free$ induces a monad on $\RelGraph$. However, this monad can also be described as the monad coming from an adjunction induced by the adjunction $\sFr \dashv \sForget$ from~\eqref{diag:fuadj}.

\begin{prop}
	The adjunction $\sFr \dashv \sForget$ can be lifted to a free-forgetful adjunction
	\begin{equation}
		\adjunction{\rFr}{\RelGraph}{\RelSemiCat}{\rForget}.
	\end{equation}
\end{prop}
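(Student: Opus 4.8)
The plan is to realise $\rFr$ and $\rForget$ as the restrictions, to the full subcategories $\RelGraph \hookrightarrow \Arr{\Graph}$ and $\RelSemiCat \hookrightarrow \Arr{\SemiCat}$, of the adjunction induced by $\sFr \dashv \sForget$ on arrow categories. Recall from \cref{rmk:adjrsc} and the discussion above that $\RelSemiCat = \Arr{\SemiCat}_{\Mbo}$ and $\RelGraph = \Arr{\Graph}_{\Mbo}$ are \emph{full} subcategories of the respective arrow categories, where $\Mbo$ is the class of bijective-on-objects faithful functors on the semicategory side and of bijective-on-objects monomorphisms on the graph side. Since an arrow category is the functor category $\SemiCat^{\mathbf{2}}$ (resp.\ $\Graph^{\mathbf{2}}$), post-composition with the adjoint pair $\sFr \dashv \sForget$ yields an adjunction $\sFr_* \dashv \sForget_*$ between $\Arr{\Graph}$ and $\Arr{\SemiCat}$ acting componentwise on objects and on commutative squares.

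First I would check that $\sForget_*$ sends $\RelSemiCat$ into $\RelGraph$: given $i_{\C}\colon \marking{\C} \hookrightarrow \carrier{\C}$ bijective-on-objects and faithful, the graph morphism $\sForget i_{\C}$ is bijective on vertices (as $\sForget$ acts as the identity on objects) and injective on edges (as $i_{\C}$ is faithful and injective on objects), hence lies in $\Mbo$. Conversely I would check that $\sFr_*$ sends $\RelGraph$ into $\RelSemiCat$, and this is the one step carrying genuine content: $\sFr$ must send a bijective-on-objects monomorphism $i_X$ of graphs to a bijective-on-objects faithful functor. This follows from the fact, recorded just before the statement as a consequence of \cref{prop:freeCart}, that $\free = \sForget\sFr$ preserves the class $\Mbo$ of graphs. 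Indeed, a bijective-on-objects functor of semicategories is faithful exactly when its underlying graph morphism is a monomorphism, so $\sFr i_X$ lies in $\Mbo$ precisely because $\sForget\sFr i_X = \free i_X$ does.

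With both functors restricting to the full subcategories, I would conclude by the general principle that an adjunction restricts to full subcategories closed under the respective functors. Concretely, for $X \in \RelGraph$ and $\D \in \RelSemiCat$, fullness of both inclusions gives
\[
	\RelSemiCat(\sFr_* X, \D) = \Arr{\SemiCat}(\sFr_* X, \D) \cong \Arr{\Graph}(X, \sForget_* \D) = \RelGraph(X, \sForget_* \D),
\]
naturally in both variables, so setting $\rFr := \sFr_*|_{\RelGraph}$ and $\rForget := \sForget_*|_{\RelSemiCat}$ produces the desired adjunction $\rFr \dashv \rForget$. Everything apart from the preservation of $\Mbo$ by $\sFr$ is formal. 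Unwinding the construction, $\rFr$ sends a relative graph $i_X\colon \marking{X} \hookrightarrow \carrier{X}$ to the inclusion of path semicategories $\sFr i_X\colon \sFr\marking{X} \hookrightarrow \sFr\carrier{X}$, so that a marked edge generates exactly the marked paths, matching the intuition from \cref{sec:rso} that $\rFr$ computes the free relative semicategory on a relative graph.
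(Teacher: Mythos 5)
Your proposal is correct and follows essentially the same route as the paper: the paper likewise defines $\rFr X = \sFr(i_X)$ and $\rForget$ analogously, and obtains the adjunction from the bijective correspondence between commuting squares supplied by $\sFr \dashv \sForget$, relying on the observation (made just before the statement) that $\free = \sForget\sFr$ preserves $\Mbo$ so that the functors land in the right subcategories. Your write-up merely makes explicit the restriction-of-an-adjunction packaging and the verification that $\sForget$ and $\sFr$ preserve the respective $\Mbo$ classes, which the paper leaves implicit.
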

\begin{proof}
	Define $\rFr\:\RelGraph \to \RelSemiCat$ by mapping a relative graph $X$ to $\sFr(i_X)$, and $\rForget\:\RelSemiCat \to \RelGraph$ analogously. By adjointness $\sFr \dashv \sForget$, we have a one-to-one correspondence between the squares
	\begin{equation*}
	\begin{tikzcd}
		\sFr\marking{X} \ar[r] \ar[d, hook']
		& \marking{Y} \ar[d, hook] \\
		\sFr\carrier{X} \ar[r]
		& \carrier{Y}
	\end{tikzcd}
	\quad \text{and} \quad
	\begin{tikzcd}
		\marking{X} \ar[r] \ar[d, hook']
		& \sForget\marking{Y} \ar[d, hook] \\
		\carrier{X} \ar[r]
		& \sForget\carrier{Y}
	\end{tikzcd}
	\end{equation*}
	In particular, this gives rise to a natural bijection between the hom-sets $\RelSemiCat(\rFr X, Y)$ and $\RelGraph(X,\rForget Y)$, thus $\rFr \dashv \rForget$.
\end{proof}

We will call the monad induced by the adjunction $\rFr \dashv \rForget$ the \emph{free relative semicategory monad} and denote it by $\rfree$.

\begin{rmk}\label{rmk:rfree}
	More precisely, the functor $\rfree\: \RelGraph \to \RelGraph$ maps a relative graph $X$ to $(\free \carrier{X}, \free\marking{X})$ where $\free \carrier{X}$ is the graph of the free semicategory on the carrier of $X$, and $\free\marking{X}$ is the graph of the free semicategory of $\marking{X}$.
\end{rmk}

We can also describe the graph $\rfree X$ as the graph of the relative path semicategory $\rPath X$.
The objects are the vertices of $X$ and the morphisms are the \emph{(relative) paths} in $X$.
That is, elements of the hom-set $\RelGraph(\alpha,X)$, where $\alpha$ is a relative linearly ordered graph of length $n \geq 1$.
The composition of two relative paths is given by \emph{(relative) concatenation} $\pathp\vpro\pathq$, which is constructed as in \cref{rmk:concat} but within $\RelGraph$.
Similarly to \cref{rmk:free}, the unit $\nu\:X \to \rfree X$ sends the edges to the corresponding paths of length one and, in addition, respects the marking.
The multiplication $\mu\:\rfree\rfree X \to \rfree X$ is given by concatenation.

\begin{prop}
	The adjunction $\rFr\dashv\rForget$ is monadic. In particular, we have an equivalence of categories $\EM{\rfree} \simeq \RelSemiCat$.
\end{prop}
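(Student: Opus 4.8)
The plan is to apply Beck's (precise) monadicity theorem to the forgetful functor $\rForget\:\RelSemiCat \to \RelGraph$, whose left adjoint $\rFr$ was just constructed; since monadicity means exactly that the comparison functor $\RelSemiCat \to \EM{\rfree}$ is an equivalence, this settles both assertions at once. Throughout I would use that $\rForget$ acts componentwise, sending a relative semicategory $Y=(\marking{Y}\hookrightarrow\carrier{Y})$ to $(\sForget\marking{Y}\hookrightarrow\sForget\carrier{Y})$, and that $\sForget\:\SemiCat\to\Graph$ is itself monadic — the semicategory analogue of the classical monadicity of the free category functor over graphs (cf.~\cite{Leinster2004}). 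Beck's theorem then reduces the claim to two verifications: that $\rForget$ reflects isomorphisms, and that $\RelSemiCat$ has coequalizers of $\rForget$-split pairs which $\rForget$ preserves.

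Conservativity is the easy step. A morphism $f$ of $\RelSemiCat$ is a commuting square with components $\marking{f},\carrier{f}$, and $\rForget f=(\sForget\marking{f},\sForget\carrier{f})$. If this is invertible in $\RelGraph$, then $\sForget\marking{f}$ and $\sForget\carrier{f}$ are graph isomorphisms; and $\sForget$ is conservative, since a functor of semicategories that is bijective on objects and on hom-sets has a two-sided inverse as a graph map that automatically preserves composition (apply the bijection $F$ to the identity $F(Gg_1\circ Gg_2)=g_1\circ g_2$). Hence $\marking{f}$ and $\carrier{f}$ are invertible in $\SemiCat$, so $f$ is invertible in $\RelSemiCat$.

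For the coequalizer condition I would exploit the absoluteness of split coequalizers. Let $f,g\:A\rightrightarrows B$ be a pair in $\RelSemiCat$ whose image under $\rForget$ admits a split coequalizer in $\RelGraph$. Postcomposing the splitting data with the two forgetful functors $\carrier{(\blank)},\marking{(\blank)}\:\RelGraph\to\Graph$ exhibits the carrier pair $(\carrier{f},\carrier{g})$ and the marking pair $(\marking{f},\marking{g})$ as $\sForget$-split pairs in $\Graph$. Monadicity of $\sForget$ then produces coequalizers $Q_{\carrier}=\mathrm{coeq}(\carrier{f},\carrier{g})$ and $Q_{\marking}=\mathrm{coeq}(\marking{f},\marking{g})$ in $\SemiCat$, preserved by $\sForget$, whose underlying graphs are precisely the carrier and marking of the split coequalizer computed in $\RelGraph$. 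The wide monomorphism carried by that split coequalizer therefore coincides with the comparison map $Q_{\marking}\to Q_{\carrier}$ induced by $i_B\:\marking{B}\hookrightarrow\carrier{B}$; being injective on objects and edges, this map is faithful, hence a monomorphism in $\SemiCat$. Consequently $(Q_{\marking}\hookrightarrow Q_{\carrier})$ is a genuine object of $\RelSemiCat=\Arr{\SemiCat}_{\Mbo}$, it is the coequalizer of $(f,g)$ there by the componentwise universal properties, and $\rForget$ carries it to the given split coequalizer. Beck's theorem now yields the equivalence $\RelSemiCat\simeq\EM{\rfree}$.

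The main obstacle is the last step: guaranteeing that the coequalizer of the two \emph{monomorphisms} $\marking{B}\hookrightarrow\carrier{B}$ again lands among the wide monos, so that the construction stays inside $\RelSemiCat$ rather than merely in $\Arr{\SemiCat}$ (where a naive pointwise coequalizer of monos need not be monic). This is exactly where absoluteness does the work: the split coequalizer \emph{already is} a wide mono in $\RelGraph$, and preservation by $\sForget$ lets me identify it with the semicategorical coequalizer comparison, sidestepping any need to take images or epi-mono factorizations.
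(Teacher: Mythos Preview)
Your proof is correct and, like the paper's, runs Beck's monadicity theorem; the conservativity step is essentially identical. The coequalizer step, however, is handled differently. The paper follows the standard argument from \cite{TTT1985}: given an absolute coequalizer $h\:\rForget\C'\to G$, it applies $\rfree$ and uses the counit $\epsilon$ to obtain a structure map $\gamma\:\rfree G\to G$, verifies that $(G,\gamma)$ is an $\rfree$-algebra (equivalently, a relative semicategory), and checks that it has the required universal property. Your argument instead reduces componentwise to the monadicity of $\sForget\:\SemiCat\to\Graph$, producing the carrier and marking coequalizers separately in $\SemiCat$ and then using absoluteness to ensure the comparison $Q_{\marking}\to Q_{\carrier}$ remains a wide mono. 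Your route is more modular---it leverages a known monadicity result rather than repeating its proof in the relative setting---but it costs you the verification that the induced $Q_{\marking}\to Q_{\carrier}$ lands in $\Mbo$, which you correctly isolate as the one nontrivial point. The paper's route is more self-contained and avoids importing the monadicity of $\sForget$, at the price of leaving the algebra axioms for $\gamma$ to the reader.
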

We follow the proof given in the second half of the demonstration of~\cite[Corollary~3.4.7]{TTT1985} but adapt it to the relative semicategories and relative graphs.
\begin{proof}
	It suffices to check that $\rForget$ satisfies the hypotheses of Beck's monadicity theorem, as formulated in~\cite[Theorem~4.4.4]{Borceux1994}.
	The third condition of the monadicity theorem is met.
	In fact, let $f,g\:\C \to C'$ be in $\RelSemiCat$ and suppose the coequaliser $h \: \rForget\C' \to G$ for the pair $(\rForget(f),\rForget(g))$ is absolute.
	Using the counit $\epsilon$ of the adjunction $\rFr \dashv \rForget$, we get the following commutative diagram:
	\begin{equation*}
	\begin{tikzcd}
		\rfree\rForget \C
			\ar[r, "\rfree\rForget(f)", shift left]
			\ar[r, "\rfree\rForget(g)"', shift right]
			\ar[d, "\epsilon_{\rForget \C}"'] &
		\rfree\rForget \C'
			\ar[r, "\rfree h"]
			\ar[d, "\epsilon_{\rForget \C'}"] &
		\rfree G
			\ar[d, "\gamma", dotted]\\
		\rForget \C
			\ar[r, "\rForget(f)", shift left]
			\ar[r, "\rForget(g)"', shift right] &
		\rForget \C'
			\ar[r, "h"'] &
		G
	\end{tikzcd}
	\end{equation*}
	The canonical map $\gamma\:\rfree G \to G$ coming from the fact that $\rfree h$ is the coequaliser gives the structure of a relative semicategory to $G$.
	In addition, the relative semicategory associated to $\gamma$ has the universal property of the coequaliser of $(f,g)$, and this coequaliser is preserved by $\rForget$.

    We now check that $\rForget$ reflects isomorphisms.
	Let $f\:\C \to \C'$ be a morphism of relative semicategories and suppose that $\rForget(f)$ is an isomorphism.
	The inverse $g\:\rForget \C' \to\rForget \C$ lifts to a functor of relative semicategories since, for an edge $x_0\, x_1$ of $\rForget \C'$, we have
	\begin{equation*}
		g(x_0\,x_1)=g((\rForget(f)g (x_0))\,(\rForget(f)\,g (x_1))) = g(\rForget(f)(g (x_0)\, g (x_1)))
	\end{equation*}
	by using the functoriality of $f$.
	Thus, $g(x_0\, x_1)=g (x_0)\, g (x_1)$.
\end{proof}

In view of showing that $\rfree$ is strongly cartesian, we will first show that it is a local right adjoint using \cref{prop:LRAdenseGen}.
Recall from \cref{rmk:sepgra} that $\RelGraph$ is a category of separated presheaves.
In particular, it has the full subcategory on $[0]$, $[1]^{\flat}$ and $[1]^{\sharp}$ as a dense generator, which can be identified with $(\marking{\mathbb{G}}_1)^{\op}$.
We show that $\rfree$ is a local right adjoint by taking $\rfree$-generic factorisations of the maps $[0] \to \rfree \tOb{\RelGraph}$, $[1]^{\flat} \to \rfree \tOb{\RelGraph}$ and $[1]^{\sharp} \to \rfree \tOb{\RelGraph}$.
Here, the notation $(\blank)^{\flat}$ and $(\blank)^{\sharp}$ was introduced in \cref{rmk:adjrg}, and $\rfree\tOb{\RelGraph}$ corresponds to the relative graph with one object and a marked loop for each natural number $n\geq 1$.
Thus, for $\sigma \in \{\flat,\sharp\}$, a morphism $\bfm{n}^{\sigma}\:[1]^{\sigma} \to \rfree\tOb{\RelGraph}$ selects a single loop of length $n$, which corresponds to a path $\freem{\bfm{n}}^{\sigma}\:[n]^{\sigma} \to \tOb{\RelGraph}$.
Let us consider the maximal path in $[n]^{\sigma}$, namely $\id_{[n]^{\sigma}}$, and the morphism $\genm{\bfm{n}}^{\sigma}\: [1]^{\sigma} \xlongrightarrow{} \rfree [n]^{\sigma}$ that picks it out.
We also denote by $\{n\}$ the discrete graph with $n+1$ vertices.

\begin{lem}\label{lem:maxrelpathGeneric}
	The maps $\genm{\bfm{n}}^{\sigma}\: [1]^{\sigma} \vblongto \rfree [n]^{\sigma}$ are $\rfree$-generic.
\end{lem}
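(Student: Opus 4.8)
The plan is to argue exactly as in the proof of \cref{lem:maxpathGeneric}, replacing paths by \emph{relative paths}. Recall from \cref{rmk:rfree} and the description of the relative path semicategory $\rPath$ that an edge of $\rfree X$ is a relative path in $X$, a marked edge being one all of whose edges lie in $\marking{X}$. Consequently a morphism $\alpha\:[1]^{\sigma}\to\rfree X$ in $\RelGraph$ selects a single path $\langle\alpha\rangle$ in $X$: when $\sigma=\sharp$ the edge of $[1]^{\sharp}$ is marked, so $\langle\alpha\rangle$ is a path in $\marking{X}$, whereas when $\sigma=\flat$ the edge is unmarked and $\langle\alpha\rangle$ is merely a path in $\carrier{X}$. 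In either case $\genm{\bfm{n}}^{\sigma}$ selects the maximal relative path $\id_{[n]^{\sigma}}$, which has length $n$. This uniform reading of the two cases is what lets one treatment cover both values of $\sigma$.

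First I would set up the genericness square: suppose $\alpha$, $\beta\:[n]^{\sigma}\to Y$ and $\gamma\:X'\to Y$ make the square with top $\alpha\:[1]^{\sigma}\to\rfree X'$, bottom $\rfree\beta$, left $\genm{\bfm{n}}^{\sigma}$ and right $\rfree\gamma$ commute. The map $\alpha$ picks a path $\langle\alpha\rangle$ in $X'$, and commutativity forces the image path $\gamma\langle\alpha\rangle$ to equal the path $\rfree\beta(\id_{[n]^{\sigma}})=\beta$ selected on the other side; in particular $\langle\alpha\rangle$ and $\beta$ have the same length, namely $n$. Hence $\langle\alpha\rangle$ is a relative path of length $n$, i.e.\ a morphism $\langle\alpha\rangle\:[n]^{\sigma}\to X'$ in $\RelGraph$. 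Taking $\delta=\langle\alpha\rangle$ yields the factorisation $\alpha=\rfree\delta\,\genm{\bfm{n}}^{\sigma}$ together with $\gamma\delta=\beta$, and $\delta$ is unique because the equation $\rfree\delta\,\genm{\bfm{n}}^{\sigma}=\alpha$ already pins down its underlying path. This exhibits $\delta$ as an $\rfree$-filler of the square, which is precisely $\rfree$-genericness.

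The point requiring care — and the only genuine difference from \cref{lem:maxpathGeneric} — is that the filler must be a genuine marking-preserving morphism, uniformly in $\sigma$. This is automatic here precisely because all four maps in the square already live in $\RelGraph$: since $\alpha$ preserves the marking, the path $\langle\alpha\rangle$ it selects lands in $\marking{X'}$ whenever $\sigma=\sharp$, so $\langle\alpha\rangle\:[n]^{\sharp}\to X'$ is marking-preserving, while for $\sigma=\flat$ the marking of $[n]^{\flat}$ is discrete and imposes no condition on edges. I expect the main obstacle to be resisting the tempting shortcut of viewing $\rfree$ as $\free$ applied componentwise in $\Arr{\Graph}$ and invoking \cref{lem:maxpathGeneric} on each component separately. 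This does \emph{not} work: $\RelGraph$ is only a \emph{full} subcategory of $\Arr{\Graph}_{\Mbo}$, and in $\Arr{\Graph}$ the marking component $\{1\}\to\{n\}$ of $\genm{\bfm{n}}^{\flat}$ is not $\free$-generic, since a filler would have to choose preimages of the interior vertices of $\{n\}$ under $\gamma$, destroying uniqueness. It is exactly the constraint that every object in the square is a \emph{monomorphism} of graphs — binding the marking to the carrier over a common vertex set — that rescues genericness, and the direct relative-path argument above sidesteps this subtlety entirely by working inside $\RelGraph$ from the outset.
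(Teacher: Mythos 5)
Your proof is correct. The difference from the paper is one of packaging rather than substance: the paper draws the genericness square in $\RelGraph$ as a cube in $\Graph$ (carrier graphs in front, marked subgraphs in back), obtains the filler $\carrier{\delta}$ from the already-proved $\free$-genericity of $\genm{\bfm{n}}$ (\cref{lem:maxpathGeneric}) applied to the front face, and then splits into the two cases $\sigma=\sharp$ (where the back face is again an instance of \cref{lem:maxpathGeneric}) and $\sigma=\flat$ (where the marking component is discrete and the map $\{n\}\to\marking{X}$ is forced by $\carrier{\delta}$ on vertices). You instead rerun the argument of \cref{lem:maxpathGeneric} directly inside $\RelGraph$, reading an edge of $\rfree X$ as a relative path and letting the marking take care of itself; this is self-contained and arguably cleaner, at the cost of not reusing the earlier lemma. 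Your closing caveat is well taken and consistent with what the paper actually does: the marking component $\{1\}\to\{n\}$ of $\genm{\bfm{n}}^{\flat}$ is indeed not $\free$-generic in $\Arr{\Graph}$, which is precisely why the paper's $\flat$ case is handled by a direct determination of $\marking{\delta}$ from $\carrier{\delta}$ rather than by a second appeal to genericity. One small point you gloss over: in checking $\gamma\delta=\beta$ for $\sigma=\flat$ you should note that equality of the carrier components suffices because the marking components of morphisms out of $[n]^{\flat}$ are determined on vertices by the carrier (the inclusions $\marking{Y}\hookrightarrow\carrier{Y}$ being bijective on objects); this is implicit in your remark that the discrete marking ``imposes no condition'' but is worth making explicit.
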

\begin{proof}
	Let $\alpha$, $\beta$ and $\gamma$ be morphisms making the diagram
	\begin{center}
		\begin{tikzcd}
			{[1]^{\sigma}}
			\ar[r, "\alpha"]
			\ar[d, "\genm{\bfm{n}}^{\sigma}"', mid vert] &
			\rfree X
			\ar[d, "\rfree\gamma"] \\
			{\rfree [n]^{\sigma}}
			\ar[r, "\rfree\beta"'] &
			\rfree Y
		\end{tikzcd}
	\end{center}
	commute.
	Recall that a square like this in $\RelGraph$ can be represented by a cube with the carrier graphs on the front square and the wide subgraphs of marked edges on the back square.
	\begin{center}
	\begin{tikzcd}[row sep=0.8cm, column sep=1cm]
		&
		{\marking{([1]^\sigma)}}
		\ar[dl, "i_{[1]^{\sigma}}" description, hook']
		\ar[rr, "\marking{\alpha}" description]
		\ar[dd, "\marking{(\genm{\bfm{n}}^{\sigma})}" description, near end]
		& &
		\free\marking{X}
		\ar[dl, "i_{\free X}" description, hook']
		\ar[dd, "\marking{\free\gamma}" description] \\
		{[1]}
		\ar[rr, "\carrier{\alpha}"description, near end, crossing over]
		\ar[dd, "\carrier{(\genm{\bfm{n}}^{\sigma})} \,=\, \genm{\bfm{n}}"', mid vert]
		& &
		\free\carrier{X} \\
		&
		\free{\marking{([n]^\sigma)}}
		\ar[dl, "i_{\rfree{[n]^\sigma}}" description, hook']
		\ar[rr, "\marking{\free \beta}" description, near start]
		& &
		\free\marking{Y}
		\ar[dl, "i_{\free Y}" description, hook'] \\
		\free{[n]}
		\ar[rr, "\carrier{\free \beta}" description]
		& &
		\free\carrier{Y}
		\ar[from=uu,"\carrier{\free \gamma}"description, crossing over, near end]
	\end{tikzcd}
	\end{center}
	For $\sigma \in \{\sharp,\flat\}$, the carrier map $\carrier{(\genm{\bfm{n}}^{\sigma})}$ is the morphism $\genm{\bfm{n}}$, which is $\free$-generic by \cref{lem:maxpathGeneric}.
	In particular, the front square has an $\free$-filler
	$\carrier{\delta}\:[n] \to \carrier{X}$.\\
	For $\sigma = \sharp$ the morphism $\marking{(\genm{\bfm{n}}^{\sharp})}$ is also equal to $\genm{\bfm{n}}$, hence the maps $i_{[1]^{\sigma}}$ and $i_{\rfree{[n]^\sigma}}$ are identities and the back square has an $\free$-filler $\marking{\delta}\:[n] \to \marking{X}$.
	By uniqueness of $\carrier{\delta}$ we have the equality $\carrier{\delta} = i_{\free X} \marking{\delta}$, so we get a map $\delta\:[n]^{\sharp} \to X$.
	The relations $\genm{\bfm{n}}^{\sharp} \rfree\delta = \alpha$ and $\gamma \delta = \beta$, and the uniqueness making $\delta$ a $\rfree$-filler hold trivially since $\carrier{\delta}$ and $\marking{\delta}$ are $\free$-fillers.\\
	For $\sigma = \flat$ no edge is marked, therefore the subgraphs $\marking{([1]^{\flat})}=\{1\}$ and $\marking{(\rfree [n]^{\flat})}=\{n\}$ are discrete.
	Thus, the morphism $\marking{(\genm{\bfm{n}}^{\flat})}\:\{1\} \to \{n\}$ simply maps $0$ to $0$ and $1$ to $n$, and $\carrier{\delta}$ induces a morphism $\marking{\delta}\: \{n\} \to \marking{X}$.
	In fact, the composite $\carrier{\delta} i_{\rfree [n]^{\flat}}$ picks out the sequence of vertices determined by $\carrier{\delta}$, and hence can be restricted to $\marking{X}$ since it is a wide subgraph of $X$.
	In particular, we get a map $\delta\:[n] \to X$. Since $\marking{\delta}$ is totally determined by $\carrier{\delta}$, we get $\gamma \delta= \beta$ and $\rfree \delta \, \genm{\bfm{n}}^{\flat}= \alpha$. So, $\delta$ is an $\rfree$-filler.
\end{proof}

\begin{prop}\label{prop:relpathGenFact}
	The maps $\bfm{0}\:[0] \to \rfree \tOb{\RelGraph}$ and $\bfm{n}^{\sigma}\:[1]^{\sigma} \to \rfree \tOb{\RelGraph}$
	have $\rfree$\nobreakdash-generic factorisations given by
	\begin{equation*}
		[0] \xvblongto{\id_{[0]}} \rfree[0]=[0] \xlongrightarrow{\bfm{0}} \rfree \tOb{\RelGraph} \quad \text{and} \quad
		[1]^{\sigma} \xvblongto{\genm{\bfm{n}}^{\sigma}} \rfree [n]^{\sigma} \xlongrightarrow{\rfree \freem{\bfm{n}}^{\sigma}} \rfree\tOb{\RelGraph},
	\end{equation*}
	for $n\geq 1$ and $\sigma \in \{\flat,\sharp\}$.
\end{prop}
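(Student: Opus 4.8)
The plan is to follow the template of \cref{prop:freeStrgCart}, transported to the relative setting: the genuine content is the genericity of the left-hand factors, which is already supplied by \cref{lem:maxrelpathGeneric}, so what remains is only (a) to check that the two displayed composites really factor $\bfm{0}$ and $\bfm{n}^{\sigma}$, and (b) to dispose of the degenerate object $[0]$ by hand. I expect (a) to be a direct unwinding of how the maps involved pick out paths, and the only place needing a small separate argument is the genericity of $\id_{[0]}$, since \cref{lem:maxrelpathGeneric} is stated only for $n\geq 1$.

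For the map $\bfm{0}\:[0] \to \rfree\tOb{\RelGraph}$, I would first note that the graph $[0]$ has a single vertex and no edges, hence carries no path of positive length; as $\rfree$ leaves the vertex set unchanged, this gives $\rfree[0] = [0]$ and identifies the first factor with $\id_{[0]}$, so the composite $\bfm{0}\circ\id_{[0]}$ is $\bfm{0}$ on the nose. To see that $\id_{[0]}$ is $\rfree$-generic, I would use that a map out of $[0]$ in $\RelGraph$ is the same data as a choice of vertex of its target, together with $(\rfree X')_0 = X'_0$ for every relative graph $X'$. This yields a bijection $\RelGraph([0],X') \cong \RelGraph([0],\rfree X')$ given by $\delta \mapsto \rfree\delta$, so any $\alpha\:[0]\to\rfree X'$ occurring in a generic square is uniquely of the form $\rfree\delta$; the lower-triangle condition $\gamma\delta = \beta$ is then forced on the single vertex by the commutativity of the square, giving the unique $\rfree$-filler $\delta$.

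For the maps $\bfm{n}^{\sigma}\:[1]^{\sigma}\to\rfree\tOb{\RelGraph}$ with $n\geq 1$ and $\sigma\in\{\flat,\sharp\}$, I would unwind the definitions recalled just before the statement: $\bfm{n}^{\sigma}$ selects the single marked loop of length $n$, that is the relative path $\freem{\bfm{n}}^{\sigma}\:[n]^{\sigma}\to\tOb{\RelGraph}$, while $\genm{\bfm{n}}^{\sigma}$ picks out the maximal path $\id_{[n]^{\sigma}}$ in $\rfree[n]^{\sigma}$. Post-composing with $\rfree\freem{\bfm{n}}^{\sigma}$ amounts to applying $\freem{\bfm{n}}^{\sigma}$ to this maximal path, so the composite selects $\freem{\bfm{n}}^{\sigma}\circ\id_{[n]^{\sigma}} = \freem{\bfm{n}}^{\sigma}$, which is exactly the path chosen by $\bfm{n}^{\sigma}$; hence the displayed composite is a factorisation. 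That its left factor $\genm{\bfm{n}}^{\sigma}$ is $\rfree$-generic is precisely \cref{lem:maxrelpathGeneric}, so the factorisation is $\rfree$-generic.

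Since the substantive work on genericity lives in \cref{lem:maxrelpathGeneric} (and, underneath it, in \cref{lem:maxpathGeneric} together with the cube-chase separating $\sigma=\sharp$ from $\sigma=\flat$), I anticipate no real obstacle: the main subtlety is merely being careful with the edge-free object $[0]$, where one must argue genericity directly from the vertex-selection description rather than invoke \cref{lem:maxrelpathGeneric}. With these factorisations in hand, \cref{prop:LRAdenseGen} then applies to the dense generator $\{[0],[1]^{\flat},[1]^{\sharp}\}$ (identified with $(\marking{\mathbb{G}}_1)^{\op}$) to conclude that $\rfree$ is a local right adjoint.
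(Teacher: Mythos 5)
Your proposal is correct and follows essentially the same route as the paper: verify that the displayed composites agree with $\bfm{0}$ and $\bfm{n}^{\sigma}$ (for the latter by evaluating on the unique edge $01^{\sigma}$), and then invoke \cref{lem:maxrelpathGeneric} for the genericity of $\genm{\bfm{n}}^{\sigma}$. The paper simply declares the $[0]$ case ``trivial''; your explicit argument that $\id_{[0]}$ is $\rfree$-generic via the bijection $\RelGraph([0],X') \cong \RelGraph([0],\rfree X')$ is a correct spelling-out of that step.
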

\begin{proof}
	The factorisation for $\bfm{0}$ is trivial and for $\rfree \freem{\bfm{n}}^{\sigma} \, \genm{\bfm{n}}^{\sigma}=\bfm{n}^{\sigma}$ it suffices to verify its action on the only edge $01^{\sigma} \in [1]^{\sigma}$.
	We have $\rfree \freem{\bfm{n}}^{\sigma} \, \genm{\bfm{n}}^{\sigma}(01^{\sigma})=\rfree\freem{\bfm{n}}^{\sigma}(\id_{[n]^{\sigma}})=\freem{\bfm{n}}^{\sigma} \, \id_{[n]^{\sigma}}$, so $\rfree\freem{\bfm{n}}^{\sigma} \, \genm{\bfm{n}}^{\sigma} = \bfm{n}^{\sigma}$.
	The result now follows from \cref{lem:maxrelpathGeneric}.
\end{proof}

\begin{prop}\label{prop:rfreeStrgCart}
	The monad $\rfree\:\RelGraph \to \RelGraph$ is strongly cartesian.
\end{prop}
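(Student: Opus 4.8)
The plan is to unwind the definition of \emph{strongly cartesian}, which demands two things: that $\rfree$ be cartesian, and that $\rfree$ be a local right adjoint. I would establish these separately, exploiting throughout the fact that $\rfree$ is the monad $\free_{\Mbo}$ induced on $\RelGraph = \Arr{\Graph}_{\Mbo}$ by the free semicategory monad $\free$ on $\Graph$.

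For cartesianness, the natural route is \cref{prop:MonadClass} with $\E = \Graph$, $\M = \Mbo$ and $T = \free$. Its hypotheses are: that $\free$ is cartesian, which is \cref{prop:freeCart}; that $\free$ restricts to $\Mbo$, which holds because a cartesian monad preserves monomorphisms and $\free$ is the identity on vertices, hence preserves bijective-on-objects monomorphisms (as already noted when $\rfree$ was introduced); and that $\Arr{\Graph}_{\Mbo}$ is closed under pullbacks. For the last point I would use that $\RelGraph$, being a category of separated presheaves, is a reflective subcategory of a presheaf category, so its limits are computed as in the ambient presheaf category; there pullbacks are pointwise, and both injectivity on edges and bijectivity on vertices are stable under pullback, so $\Mbo$ is closed under the relevant pullbacks. \cref{prop:MonadClass} then yields that $\rfree = \free_{\Mbo}$ is cartesian.

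For the local right adjoint property, the work has essentially been done already: I would invoke \cref{prop:LRAdenseGen} with $R = \rfree$ and $\E = \F = \RelGraph$. This needs $\RelGraph$ cocomplete (again immediate, since a reflective subcategory of a cocomplete presheaf category is cocomplete) and equipped with a dense generator, for which I take the full subcategory on $[0]$, $[1]^{\flat}$ and $[1]^{\sharp}$, identified with $(\marking{\mathbb{G}}_1)^{\op}$. The remaining hypothesis is that every map from a generating object to $\rfree\tOb{\RelGraph}$ admits an $\rfree$-generic factorisation, which is exactly the content of \cref{prop:relpathGenFact}. Hence $\rfree$ is a local right adjoint, and combined with cartesianness this gives that $\rfree$ is strongly cartesian.

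The two applications of the general propositions are routine; the only point needing genuine care is the closure of $\Arr{\Graph}_{\Mbo}$ under pullbacks and, relatedly, the verification that limits (and colimits) of separated presheaves are computed as in the ambient presheaf category. Everything else is bookkeeping already packaged into \cref{prop:freeCart,prop:relpathGenFact}.
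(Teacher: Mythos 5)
Your proposal is correct and follows essentially the same route as the paper: cartesianness via \cref{prop:MonadClass} applied to the induced monad $\free_{\Mbo}$ on $\Arr{\Graph}_{\Mbo}$ (the paper justifies the pullback hypothesis by noting $\RelGraph$ is a quasitopos of separated presheaves, where you verify the closure of $\Mbo$ under pullbacks directly), and the local right adjoint property via \cref{prop:LRAdenseGen} applied to the generic factorisations of \cref{prop:relpathGenFact}. One non-load-bearing quibble: colimits in a reflective subcategory are obtained by reflecting the ambient colimit, not computed ``as in'' the ambient presheaf category, but your argument only uses cocompleteness of $\RelGraph$, which is fine.
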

\begin{proof}
	The monad $\rfree$ is induced by $\free$, which is strongly cartesian by \cref{prop:freeCart,prop:freeStrgCart}.
	Moreover, $\RelGraph$ is cartesian since it is a separated presheaf category (see \cref{rmk:sepgra}), and in particular a quasitopos admitting all finite limits, so $\rfree$ is cartesian by \cref{prop:MonadClass}.
	The result follows from \cref{prop:LRAdenseGen} applied to \cref{prop:relpathGenFact}.
\end{proof}

Thus, we can provide the free relative semicategory monad $\rfree$ with arities by fixing a dense generator of $\RelGraph$.
The category of arities of this monad is the subject of the next section.

\subsection{The arities of the free relative semicategory monad}\label{sec:arities}
In this section, we construct the category of canonical arities $\Ar{\Alt}{\rfree}$ for the free relative semicategory monad $\rfree$.
Before doing this, we introduce a particular dense generator $\Alt$ and give some intuition for our choice.

\subsubsection{Intuition for $\Alt$}
Thanks to \cref{thm:scmag}, we know that $\rfree$ has (canonical) arities, which depend on the chosen dense generator.
For both categories and semicategories, the smallest dense generator of $\Graph$ is taken for the construction of the canonical arities. For $\fDel$, we need to consider a non-trivial dense generator.
Indeed, if we take the smallest dense generator of $\RelGraph$ (the full subcategory generated by $[0]$ and $[1]^{\sigma}$, with $\sigma\in\{\flat,\sharp\}$) then, by \cref{prop:relpathGenFact}, the canonical arities correspond to the full subcategory of linearly ordered graphs $[0]$ and $[n]^{\sigma}$, for $n > 0$, that are either fully marked or unmarked.
This only captures a subcategory of $\fDel$, therefore one needs to refine the dense generator to include the partially marked objects.
The dense subcategory which we consider is the full subcategory $\Alt$ of alternatingly marked linearly ordered graphs, and we motivate this choice from two perspectives.

\paragraph*{}
The first perspective comes from the $\rfree$-generic factorisations and the description of $\fDel$ as (finite non-empty) relative semiordinals.
\cref{prop:relpathGenFact} shows that maps $[1]^\sigma \to \rfree\tOb{}$ factor generically through $\rfree[n]^\sigma$ where $[n]^\sigma$ is a fully (un)marked string of some length $n$.
Consequently, any relative graphs with consecutive edges of the same marking can be seen as being generated from $[1]^{\sigma}$ by $\rfree$.

An arbitrary relative semiordinal $\eta$ is a sequence of fully marked and unmarked strings of any length, where the marking of the strings alternates.
Each of these strings can be seen as being generated from $[1]^{\sigma}$ by $\rfree$.
Therefore, one can expect $\rfree$ to build $\eta$ from a sequence of $[1]^{\sigma}$, where $\sigma$ corresponds to the marking of the string.
This reasoning leads to the choice of the full subcategory of those linearly ordered graphs with no two adjacent edges of the same marking, which we denote by $\Alt$.

\paragraph*{}
With an eye on a future study of the coherences of weak identity structures in higher categories, we would like to interpret the marked morphisms in relative semicategories as weak identities and morphisms between marked strings as coherences.
The simplest and minimal identity coherences for categories are the left and right neutrality, which can be represented by morphisms between $[1]^{\flat}$ and marked linearly ordered graphs as in \eqref{diag:lrneut}:
\begin{equation}\label{diag:lrcoh}
\begin{tikzcd}[row sep=0.5, column sep=35]
	\overset{\rotatebox{-90}{$\langle$}}{\bullet}
		\ar[dd]
		\ar[dddd, "\scriptstyle{\id f}"', phantom, bend right=60]
	&
	& \overset{\rotatebox{-90}{$\langle$}}{\bullet}
		\ar[dddd, "\scriptstyle{f\id}"', phantom, bend left=60]
		\ar[dd, red] \\
	&\overset{\rotatebox{-90}{$\langle$}}{\bullet}
		\ar[dd,"\hspace{5pt}f"]
		\ar[ul, bend right, mapsto, dashed]
		\ar[ur, bend left, mapsto, dashed]
	& \\
	\bullet
		\ar[dd, red]
	&
	& \bullet
		\ar[dd] \\
	& \underset{\rotatebox{90}{$\langle$}}{\bullet}
		\ar[dl, bend left, mapsto, dashed]
		\ar[dr, bend right, mapsto, dashed]
	& \\
	\underset{\rotatebox{90}{$\langle$}}{\bullet}
	&
	& \underset{\rotatebox{90}{$\langle$}}{\bullet} \\
\end{tikzcd}
\end{equation}
Therefore, to extend \eqref{diag:lrcoh} to higher coherences for the identity structure, we need to increase the length of the strings by adding edges.
To maintain simplicity and minimality of the coherences, we only add edges in an alternating marking pattern.
The more complex ones are then constructed as the canonical arities using $\rfree$.
For instance, for a string of length $4$, we either have:
\begin{align}
	&\langle\bullet \mlgar \bullet \longrightarrow \bullet \mlgar \bullet \longrightarrow \bullet\rangle\label{diag:altex0}, \\
	\text{or} \quad
	&\langle\bullet \longrightarrow \bullet \mlgar \bullet \longrightarrow \bullet \mlgar \bullet\rangle\label{diag:altex1}
\end{align}
This type of string corresponds to alternatingly marked linear graphs, that is, we recover the full subcategory $\Alt$ of $\RelGraph$ from the previous perspective.
Most importantly, this category is dense in $\RelGraph$ since it contains the minimal dense generator.

\begin{rmk}\label{rmk:AltObj}
	As should be clear from \eqref{diag:altex0}-\eqref{diag:altex1}, there are only two possible configurations for an alternatingly marked linear graph of length $n\geq 1$: it either starts with a marked edge or with an unmarked edge.
	The marking of the final edge $e_n$ is determined by the parity of $n$.
\end{rmk}

\subsubsection{Construction of the arities}\label{sec:coar}
 We will denote by $\altl{\alto{n}}$ an object of $\Alt$ of length $n$ and starting with a marked edge as in \eqref{diag:altex0}, and write $\altg{\alto{n}}$ if it starts with an unmarked edge as in \eqref{diag:altex1}.
 In general, we will write $\alto{n}^{\epsilon}$, with $\epsilon\in\{0,1\}$.
Moreover, we write $\sigma^0=(\sharp,\flat,\ldots)$ and $\sigma^1=(\flat,\sharp, \ldots)$ for the (finite) marking sequences corresponding to $\altl{\alto{n}}$ and~$\altg{\alto{n}}$, and we leave the length implicit.
The category of canonical arities of $\rfree$, relative to $\Alt$, is defined by taking the $\rfree$-generic factorisations of all maps $\pathp\:\alto{n}^{\epsilon} \xlongrightarrow{} \rfree\tOb{\RelGraph}$, for $\alto{n}^{\epsilon}$ in $\Alt$.
From \cref{rmk:AltObj}, the objects $\alto{n}^{\epsilon}$ can be decomposed as follows:
\begin{equation}\label{eq:aldec}
	\alto{n}^{\epsilon} = [1]^{\sigma^{\epsilon}_1} \vpro\cdots\vpro [1]^{\sigma^{\epsilon}_n}.
\end{equation}
A map $\pathp\:\alto{n}^{\epsilon} \xlongrightarrow{} \rfree\tOb{\RelGraph}$ can then be described by its action on edges.
That is, by the maps $\pathp_i\: [1]^{\sigma^{\epsilon}_i}  \xlongrightarrow{\tilde{e_i}} \alto{n}^{\epsilon} \xlongrightarrow{\pathp} \rfree\tOb{\RelGraph}$, for $i=1,\ldots,n$.
These morphisms have $\rfree$-generic factorisations given by \cref{prop:relpathGenFact}:
\begin{equation*}
	[1]^{\sigma^{\epsilon}_i} \xvblongto{\genm{\pathp}_i} \rfree [m_i]^{\sigma^{\epsilon}_i} \xlongrightarrow{\rfree\freem{\pathp}_i} \rfree\tOb{\RelGraph}.
\end{equation*}
Altogether, the $\rfree$-generic morphisms $\genm{\pathp}_i$ form the natural transformation denoted by $[1]^{\sigma^{\epsilon}_{(\blank)}} \vblongto \rfree [m_{(\blank)}]^{\sigma^{\epsilon}_{(\blank)}}$ between diagrams over the zigzag category corresponding to the decomposition \eqref{eq:aldec} of $\alto{n}^{\epsilon}$.
In particular, we have a canonical morphism $\genm{\pathp}\: \alto{n}^{\epsilon} \longrightarrow{} \rfree\eta_{\pathp}$, where $\eta_{\pathp}$ corresponds to the linear relative graph $[m_1]^{\sigma^{\epsilon}_1} \vpro\cdots\vpro [m_n]^{\sigma^{\epsilon}_n}$.
Using \cref{lem:ColimPresGen}, the following result is immediate.

\begin{prop}\label{prop:etaGeneric}
	The map $\genm{\pathp}\: \alto{n}^{\epsilon} \vblongto \rfree\eta_{\pathp}$ is $\rfree$-generic.\qed
\end{prop}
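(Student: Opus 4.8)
The plan is to exhibit $\genm{\pathp}$ as a canonical colimit comparison map and then apply \cref{lem:ColimPresGen} directly. Let $\I$ be the zigzag category encoding the decomposition \eqref{eq:aldec}, with one object $e_i$ for each of the $n$ edges, one object $v_j$ for each of the $n+1$ vertices, and generating morphisms $v_{i-1}\to e_i\leftarrow v_i$ recording the source and target of $e_i$. I would define two functors $D,D'\:\I\to\RelGraph$ by $D(e_i)=[1]^{\sigma^{\epsilon}_i}$, $D(v_j)=[0]$ and $D'(e_i)=[m_i]^{\sigma^{\epsilon}_i}$, $D'(v_j)=[0]$, the leg maps sending $[0]$ to the start, resp.\ end, vertex. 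Since concatenation in $\RelGraph$ is computed by the iterated pushouts of \cref{rmk:concat}, these colimits exist and are exactly $\colim_{\I}D=\alto{n}^{\epsilon}$ and $\colim_{\I}D'=\eta_{\pathp}$.

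Next I would assemble the componentwise generic maps into a natural transformation $\rho\:D\to\rfree D'$: set $\rho_{e_i}=\genm{\pathp}_i$ on edge objects and the trivial generic map $\id\:[0]\to\rfree[0]=[0]$ on vertex objects. Each component is $\rfree$-generic, the edge components by \cref{prop:relpathGenFact} (see \cref{lem:maxrelpathGeneric}) and the vertex components trivially. The one point requiring care is naturality: for the two legs $v_{i-1}\to e_i$ and $v_i\to e_i$ the relevant squares commute precisely because $\genm{\pathp}_i$ picks out the maximal path $\id_{[m_i]^{\sigma^{\epsilon}_i}}$, and hence carries the two endpoints of $[1]^{\sigma^{\epsilon}_i}$ to the two endpoints $0$ and $m_i$ of $[m_i]^{\sigma^{\epsilon}_i}$; these endpoint assignments are exactly the gluing data of the two concatenations, so the squares commute.

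With $\rho$ in place, \cref{lem:ColimPresGen} applies verbatim and yields that the canonical morphism $\colim_{\I}D\vbto\rfree\colim_{\I}D'$ is $\rfree$-generic. By construction this canonical morphism is $\genm{\pathp}$, so I would conclude that $\genm{\pathp}\:\alto{n}^{\epsilon}\vbto\rfree\eta_{\pathp}$ is $\rfree$-generic, which is the claim. The whole argument is essentially bookkeeping once the colimit presentations are identified; the only genuine content, and the step I expect to be the main obstacle, is the naturality check of $\rho$, namely synchronising the endpoint behaviour of the maximal-path maps $\genm{\pathp}_i$ on the source side with the gluing of the blocks $[m_i]^{\sigma^{\epsilon}_i}$ into $\eta_{\pathp}$ on the target side.
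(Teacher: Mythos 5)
Your proof is correct and takes essentially the same route as the paper, which likewise presents $\genm{\pathp}$ as the canonical comparison map out of the zigzag colimit of the componentwise generic maps $\genm{\pathp}_i$ (with the trivial generic $[0]\to\rfree[0]$ on vertices) and then invokes \cref{lem:ColimPresGen}. The paper declares the result ``immediate'' and leaves the diagram construction and the naturality check implicit, so your explicit verification that the maximal-path maps match endpoints with the gluing data is a fleshing-out of the same argument rather than a different one.
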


More precisely, $\genm{\pathp}$ maps an edge $e_i$ in $\alto{n}^{\epsilon}$ to the canonical morphism $j_i\:[m_i]^{\sigma^{\epsilon}_i} \to \eta_{\pathp}$, which corresponds to the relative path of length $m_i$ that starts at $m_1 +\cdots+ m_{i-1}$.
The image of $\pathp\:\alto{n}^{\epsilon} \to \rfree\tOb{\RelGraph}$ can hence be described as the concatenation of all the paths $\freem{\pathp}_i\:[m_i]^{\sigma^{\epsilon}_i} \to \tOb{}$.
This concatenation is denoted by $\freem{\pathp}\:\eta_{\pathp} \to \tOb{\RelGraph}$.

\begin{cor}
	A map $\pathp\:\alto{n}^{\epsilon} \xlongrightarrow{} \rfree\tOb{\RelGraph}$ has an $\rfree$-generic factorisation given by
	\begin{equation*}
		\alto{n}^{\epsilon} \xvblongto{\genm{\pathp}} \rfree\eta_{\pathp} \xlongrightarrow{\rfree\freem{\pathp}}  \rfree\tOb{\RelGraph}.
	\end{equation*}
\end{cor}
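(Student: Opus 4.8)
The plan is to observe that almost all of the work has already been done, so that the corollary reduces to assembling \cref{prop:etaGeneric} with a short verification. Indeed, \cref{prop:etaGeneric} already establishes that $\genm{\pathp}\:\alto{n}^{\epsilon} \vbto \rfree\eta_{\pathp}$ is $\rfree$-generic. Hence the only thing left to prove is that the displayed composite is genuinely a factorisation of $\pathp$, namely that $\rfree\freem{\pathp}\,\genm{\pathp} = \pathp$.

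To verify this equality, I would exploit the decomposition \eqref{eq:aldec}, which exhibits $\alto{n}^{\epsilon}$ as the concatenation $[1]^{\sigma^{\epsilon}_1} \vpro\cdots\vpro [1]^{\sigma^{\epsilon}_n}$, that is, as a colimit over the zigzag category whose colimit injections are the edge-inclusions $\freem{e}_i\:[1]^{\sigma^{\epsilon}_i} \to \alto{n}^{\epsilon}$. By the universal property of this colimit, two maps out of $\alto{n}^{\epsilon}$ coincide as soon as they agree after precomposition with each $\freem{e}_i$; thus it suffices to check $\rfree\freem{\pathp}\,\genm{\pathp}\,\freem{e}_i = \pathp\,\freem{e}_i$ for every $i$.

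The right-hand side is $\pathp_i$ by definition. For the left-hand side, I would chain the three identities that are already in place: first, $\genm{\pathp}\,\freem{e}_i = \rfree j_i\,\genm{\pathp}_i$, where $j_i\:[m_i]^{\sigma^{\epsilon}_i} \to \eta_{\pathp}$ is the colimit injection (this is exactly the description of $\genm{\pathp}$ on edges recorded just after \cref{prop:etaGeneric}); second, $\freem{\pathp}\,j_i = \freem{\pathp}_i$, since $\freem{\pathp}$ is by construction the concatenation of the $\freem{\pathp}_i$; and third, the edge-wise generic factorisation $\rfree\freem{\pathp}_i\,\genm{\pathp}_i = \pathp_i$ supplied by \cref{prop:relpathGenFact}. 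Together these give
\begin{equation*}
	\rfree\freem{\pathp}\,\genm{\pathp}\,\freem{e}_i = \rfree\freem{\pathp}\,\rfree j_i\,\genm{\pathp}_i = \rfree(\freem{\pathp}\,j_i)\,\genm{\pathp}_i = \rfree\freem{\pathp}_i\,\genm{\pathp}_i = \pathp_i,
\end{equation*}
and the universal property of the colimit then yields $\rfree\freem{\pathp}\,\genm{\pathp} = \pathp$, completing the argument.

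There is essentially no obstacle here: the substantive content (genericness) is \cref{prop:etaGeneric}, and what remains is purely the bookkeeping of colimit injections needed to confirm that the composite recovers $\pathp$. An alternative, more machine-driven route would bypass the edge-wise check entirely by invoking \cref{lem:ColimPresGenFact} directly, applied to the zigzag diagram with $D_i = [1]^{\sigma^{\epsilon}_i}$, the constant diagram $D'_i = \tOb{\RelGraph}$, and the natural transformation whose components are the $\pathp_i$; its conclusion is precisely an $\rfree$-generic factorisation of $\pathp$ through $\rfree\eta_{\pathp}$ (the colimit of the intermediate objects $[m_i]^{\sigma^{\epsilon}_i}$ being $\eta_{\pathp}$), with \cref{rmk:cocom} ensuring that the relevant colimits suffice in place of full cocompleteness. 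I would nonetheless present the explicit edge-wise verification, since the objects $\genm{\pathp}$ and $\freem{\pathp}$ have already been named and described, making the direct check the most transparent.
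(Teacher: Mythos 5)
Your proposal is correct and follows essentially the same route as the paper: the paper's proof likewise verifies the factorisation edge-wise, computing $\rfree\freem{\pathp}\,\genm{\pathp}(e_i) = \freem{\pathp}\,j_i = \pathp(e_i)$, and then cites \cref{prop:etaGeneric} for genericness. Your version simply spells out the intermediate identities (and notes the alternative via \cref{lem:ColimPresGenFact}, which is indeed also viable) in more detail than the paper does.
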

\begin{proof}
	We have that $\rfree \freem{\pathp} \, \genm{\pathp}(e_i) = \freem{\pathp} j_i = \pathp(e_i)$, where $j_i\:[m_i]^{\sigma^{\epsilon}_i} \to \eta_{\pathp}$.
	The result then follows from \cref{prop:etaGeneric}.
\end{proof}

The $\rfree$-generic factorisations of the corollary are essentially unique by \cite[Lemma 5.7]{Weber2004}.

It follows from the decomposition $\eta_{\pathp}=[m_1]^{\sigma^{\epsilon}_1} \vpro\cdots\vpro [m_n]^{\sigma^{\epsilon}_n}$ and the definition that the category of canonical arities $\Ar{\Alt}{\rfree}$ is the full subcategory of partially marked linear graphs, with relative graph morphisms between them.

\begin{rmk}\label{rmk:GraphMorphDistPres}
	Let $g\:[n] \to [m]$ be a graph morphism of linear graphs. Writing $\src$ and $\trg$ for the source and target maps, and $g_0$ and $g_1$ for the object and morphism parts of $g$, we have by definition that $\trg g_1 = g_0 \trg$ and $\src g_1 = g_0 \src$.
	Let $e_i$ denote the $i^{\text{th}}$ edge in $[n]$, then $g_0(i+1)=g_0 \trg(e_i)= \trg g_1(e_i)$.
	Since $g_1(e_i)$ is an edge of length $1$ from $g_0(i)$ we get that $g_0(i+1)= g_0(i) + 1$.
	In particular, morphisms of linear graphs are distance preserving and monomorphisms.
	This property extends to the relative case, as the graph morphisms are the same with the additional requirement that they preserve marked edges.
\end{rmk}

As $\fDel_0$ is defined (in \cref{sec:activeinert}) as the (wide) subcategory of distance-preserving morphisms, the above \cref{rmk:GraphMorphDistPres} suggests a close connection to the category of canonical arities $\Ar{\Alt}{\rfree}$ and $\fDel_0$.
Indeed, we will establish in \cref{thm:arfDel} an isomorphism of categories between $\Ar{\Alt}{\rfree}$ and $\fDel_0$. To achieve this, we construct inverse functors.
We start with $\phi\:\Ar{\Alt}{\rfree} \to \fDel_0$.

\begin{constr}\label{constr:phi}
	Let $\alpha$ be an object in $\Ar{\Alt}{\rfree}$, in particular $\alpha$ corresponds to a wide monomorphism $i_{\alpha}\:\marking{\alpha} \hookrightarrow [n]$, where $\marking{\alpha}$ is an alternating sequence of linear graphs ${[n_1],\ldots,[n_k]}$ and discrete graphs which respectively correspond to the marked and unmarked parts of $\alpha$.
	We will write $k_{\alpha}$ for the sum $n_1+ \cdots + n_k$ and denote by $\Fr\:\Graph \to \Cat$ the free category functor.
	We define the epimorphism $\phi(\alpha)\:\Fr[n] \twoheadrightarrow \Fr[n-k_{\alpha}]$ as the canonical morphism coming from the pushout
	\begin{center}
	\begin{tikzcd}
		\Fr\marking{\alpha}
			\ar[r,"\Fr i_{\alpha}", hook]
			\ar[d, twoheadrightarrow]
			\ar[dr,"\ulcorner" description, very near end, phantom] &
		\Fr[n]
			\ar[d,"\phi(\alpha)", twoheadrightarrow] \\
		\pi_0(\Fr\marking{\alpha})
			\ar[r] &
		\Fr[n-k_{\alpha}]
	\end{tikzcd}
	\end{center}
	where $\pi_0$ returns the set of connected components viewed as a discrete category.
	Let $g\:\alpha \to \alpha'$ be a map in $\Ar{\Alt}{\rfree}$ and construct the following diagram in~$\Cat$:
	\begin{equation}\label{diag:phimor}
	\begin{tikzcd}[row sep=0.7cm, column sep=0.8cm]
		& \Fr\marking{\alpha}
			\ar[dl, hook']
			\ar[rr, "\Fr\marking{g}" description]
			\ar[dd, twoheadrightarrow]
			\ar[dddl,"\urcorner" description, very near end,phantom]
		&& \Fr\marking{\alpha'}
			\ar[dl, hook']
			\ar[dd, twoheadrightarrow]
			\ar[dddl,"\urcorner" description, very near end, phantom] \\
		{\Fr[n]}
			\ar[rr, "\Fr\carrier{g}" description, near start, crossing over]
			\ar[dd, "\phi(\alpha)" description, twoheadrightarrow]
		&& {\Fr[n']} \\
		& \pi_0(\Fr\marking{\alpha})
			\ar[dl, hook']
			\ar[rr, "\pi_0(\Fr\marking{g})" description, near start]
		&& \pi_0(\Fr\marking{\alpha'})
			\ar[dl, hook'] \\
		{\Fr[n-k_{\alpha}]}
			\ar[rr, "\fbot{\phi(g)}" description, dotted]
		&& {\Fr[n'-k_{\alpha'}]}
			\ar[from=uu, "\phi(\alpha')" description, near start, crossing over]
	\end{tikzcd}
	\end{equation}
	The front square exhibits a map $\phi(g)\:\phi(\alpha) \to \phi(\alpha')$ in $\fDel_0$.
	In fact, $\phi(g)$ is distance preserving since $\carrier{g}$ is distance preserving monomorphism by \cref{rmk:GraphMorphDistPres} and the maps $\fbot{\phi(g)}$ and $\Fr\carrier{g}$ are respectively in $\Delta$ and $\Delta_{+}$.

\end{constr}

This construction defines a functor $\phi\:\Ar{\Alt}{\rfree} \to \fDel_0$ since the morphism part is defined by a universal property.
Now, we construct a functor ${\psi\:\fDel_0 \to \Ar{\Alt}{\rfree}}$ which we will show to be the inverse of $\phi$.

\begin{constr}\label{constr:psi}
	Let $\eta\:[m] \twoheadrightarrow [n]$ be an object in $\fDel_0$ and consider $\forgetful{\eta}$, where the forgetful functor $\forgetful{(\blank)}\: \Cat \to \Graph$ is the right adjoint to $\Fr$.
	Recall that $[m]$ denotes both the ordinal of length $m$ and the linearly ordered graph of length $m$, with the context clarifying the intended meaning.
	Since the ordinal $[m]$ is the free category on the linear graph of length $m$, applying the forgetful functor $\forgetful{(\blank)}$ to an ordinal amounts to applying the free category monad $\frc$ to the corresponding linear graph.
	So, using the unit $\nu\:\id \to \frc$, we can consider the composite $\forgetful{\eta}\nu_{[m]}\:[m] \to \frc[m] \to \frc[n]$ and construct the pullback in $\Graph$
	\begin{equation*}
	\begin{tikzcd}
		\marking{\psi(\eta)}
			\ar[r, hook]
			\ar[dd]
			\ar[ddr, phantom, very near start, "{ \lrcorner }"] &
		{[m]}
			\ar[d, "\nu_{[m]}"] \\
		&
		{\frc[m]}
			\ar[d, "\forgetful{\eta}"] \\
		{\frc\{n\}}
			\ar[r, "\frc i_n"', hook] &
		{\frc[n]}
	\end{tikzcd}
	\end{equation*}
	where $\{n\}$ is the discrete graph with $n+1$ vertices and $i_n$ is the inclusion of the vertices.
	The graph $\marking{\psi(\eta)}$ corresponds to the wide subgraph of edges in the graph $[m]$ that are sent to identities in $[n]$ by $\eta$.
	From a morphism $g\:\eta \to \eta'$ in $\fDel_0$, we construct a morphism in $\Ar{\Alt}{\rfree}$ in a similar fashion as in \eqref{diag:phimor} by assembling the following diagram in $\Graph$ whose side squares are pullbacks:
	\begin{equation}\label{diag:psimor}
	\begin{tikzcd}[row sep=0.5cm, column sep=0.3cm]
		&& {\frc\{n\}}
			\ar[rrr]
			\ar[ddd, hook']
		&&& {\frc\{n'\}}
			\ar[ddd, hook]\\
		&&&&&\\
		\marking{\psi(\eta)}
			\ar[uurr]
			\ar[ddd, "\psi(\eta)" description, hook']
			\ar[rrr, "\marking{\psi(g)}" description, dotted, crossing over]
			%\ar[rrd, "\lrcorner" very near start, phantom]
		&&& \marking{\psi(\eta')}
			\ar[rruu]
			\ar[ddd, "\psi(\eta')" description, hook, crossing over]
			%\ar[rrd, "\lrcorner" very near start, phantom]
		&&\\
		&& {\frc[n]}
			\ar[rrr, "\forgetful{\fbot{g}}" description]
		&&& {\frc[n']}\\
		& {\frc[m]}
			\ar[ur, "\forgetful{\eta}"]
			\ar[rrr, "\forgetful{\ftop{g}}" description]
		&&& {\frc[m']}
			\ar[ur, "\forgetful{\eta'}"']
		&\\
		{[m]}
			\ar[ur, "\nu_{[m]}"]
			\ar[rrr, "\carrier{\psi(g)}" description]
		&&& {[m']}
			\ar[ur, "\nu_{[m']}"']
			\ar[from=uuu, "\psi(\eta')" description, hook, crossing over]
		&&
	\end{tikzcd}
	\end{equation}
	Since $g$ is in $\fDel_0$, the morphism $\ftop{g}\:[m] \hookrightarrow [m']$ is inert in $\Delta$ and is thus uniquely determined by a graph morphism denoted by $\carrier{\psi(g)}\:[m] \to [m']$.
	For more details about the connections between inert morphisms (defined as free morphisms) and the category of arities in general, we refer to \cite[Section 3]{BMW2012}.
	In particular, we have $\forgetful{\ftop{g}}=\frc\carrier{\psi(g)}$ and, by naturality of the unit, $\nu_{[m']}\carrier{\psi(g)}=\forgetful{\ftop{g}}\nu_{[m]}$.
	Using $\frc\carrier{\psi(g)}$ and the map $\{n\} \to \{n'\}$ induced by $\fbot{g}$ on the vertices, we can construct the dotted map $\marking{\psi(g)}$ by the universal property of pullbacks.
	The front square exhibits a map $\psi(g)\: \psi(\eta) \to \psi(\eta')$ in $\RelGraph$.	
\end{constr}

\begin{thm}\label{thm:arfDel}
	The category of arities $\Ar{\Alt}{\rfree}$ of the monad $\rfree$ and $\fDel_0$ are isomorphic.
\end{thm}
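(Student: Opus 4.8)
The functors $\phi$ and $\psi$ of \cref{constr:phi,constr:psi} have already been constructed, so the plan is to prove that they are mutually inverse. Both are specified on objects by a universal property---$\phi$ by a pushout in $\Cat$ that collapses the marked edges, and $\psi$ by a pullback in $\Graph$ that records the edges sent to identities---and in each case the action on morphisms is induced by that same universal property. Consequently, once the object-level identities $\psi\phi\cong\id$ and $\phi\psi\cong\id$ are established, the corresponding equalities on morphisms follow from the uniqueness clauses of these universal properties: for a morphism $g$, the maps $\psi\phi(g)$ and $g$ are both the unique arrow induced by the relevant (co)cone, so it only remains to check that the inducing data are identified under the object-level bijection. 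Here \cref{rmk:bodet} is convenient, since it lets one reduce the comparison of morphisms of $\fDel_0$ to their top components. I would therefore concentrate on the two object-level identities.

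The combinatorial heart of the argument is the observation that an epimorphism $\eta\:[m]\twoheadrightarrow[n]$ in $\Delta$ is completely determined by the set of edges it collapses, namely $\{\,i\mid\eta(i)=\eta(i+1)\,\}$, and that this set has exactly $m-n$ elements; both facts follow from the decomposition of $\eta$ into degeneracies, each of which identifies a single adjacent pair. Granting this, the two identities are transparent. For an object $\alpha$ with marking $i_{\alpha}\:\marking{\alpha}\hookrightarrow[n]$ and $k_{\alpha}$ marked edges, the pushout defining $\phi(\alpha)$ produces an epimorphism $[n]\twoheadrightarrow[n-k_{\alpha}]$ whose collapsed edges are precisely the marked ones; applying $\psi$ then records exactly those edges as the marking, so $\psi\phi(\alpha)\cong\alpha$. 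Conversely, for $\eta\:[m]\twoheadrightarrow[n]$ the pullback defining $\psi(\eta)$ marks exactly the $m-n$ collapsed edges, and $\phi$ collapses them to yield an epimorphism $[m]\twoheadrightarrow[n]$ with the same collapsed edges; by the determination fact this epimorphism is $\eta$ itself, so $\phi\psi(\eta)\cong\eta$.

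The main obstacle is not the combinatorics but the translation between the two ambient settings: $\phi$ lives in $\Cat$ and uses the free category functor $\Fr$, whereas $\psi$ lives in $\Graph$ and uses the unit $\nu\:\id\to\frc$ of the free category monad $\frc=\forgetful{(\blank)}\Fr$. Making the previous paragraph precise requires verifying that \emph{collapsing the marked edges via a pushout} and \emph{pulling back the identities via $\nu$} are genuinely mutually inverse operations on the underlying data. Concretely, I would trace the marked subgraph $\marking{\psi(\eta)}$---defined as the pullback of $\forgetful{\eta}\,\nu_{[m]}$ along the inclusion $\frc\{n\}\hookrightarrow\frc[n]$---and check, using the adjunction $\Fr\dashv\forgetful{(\blank)}$ together with the fact that a morphism of linear graphs is a distance-preserving monomorphism (\cref{rmk:GraphMorphDistPres}), that it coincides with the subgraph of edges collapsed by $\eta$; the pushout computing $\phi$ then reconstitutes $\eta$ on the nose. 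Once this identification of underlying data is in hand, the morphism-level statements reduce, by the uniqueness in the universal properties, to the object-level ones, and the mutual inverseness of $\phi$ and $\psi$---hence the claimed isomorphism of categories---follows.
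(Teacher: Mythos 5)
Your proposal follows essentially the same route as the paper's proof: establish that $\phi$ and $\psi$ are mutually inverse by checking on objects that collapsing the marked edges (the pushout in $\Cat$) and recording the collapsed edges (the pullback along $\nu$ in $\Graph$) undo each other, and then deducing the morphism-level equalities from the uniqueness clauses of the defining universal properties together with the fact that inert/distance-preserving maps are determined by their top components. The only cosmetic difference is that you write $\cong$ where the paper obtains strict equalities (as needed for an isomorphism of categories), but the underlying data coincide on the nose, so this is immaterial.
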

\begin{proof}
	We show that the functors $\phi\:\Ar{\Alt}{\rfree} \to \fDel_0$ and $\psi\:\fDel_0 \to \Ar{\Alt}{\rfree}$ are inverse to each other.
	Let us denote by $\alpha$ an object $\marking{\alpha} \hookrightarrow [n]$ in $\Ar{\Alt}{\rfree}$. By the \cref{constr:phi,constr:psi} of $\phi$ and $\psi$, the canonical arity $\psi\phi(\alpha)$ is the embedding $\marking{\psi\phi(\alpha)} \hookrightarrow [n]$. The wide subgraph $\marking{\psi\phi(\alpha)}$ represents the edges in $[n]$ which are sent to identities by $\phi(\alpha)$.
	Since the latter is a quotient map with respect to $\marking{\alpha}$, we get that $\marking{\psi\phi(\alpha)}$ is exactly $\marking{\alpha}$, thus $\psi\phi(\alpha)=\alpha$.
	Let $g\:\alpha \to \alpha'$ be a morphism in $\Ar{\Alt}{\rfree}$. Since $\carrier{g}$ is distance preserving by \cref{rmk:GraphMorphDistPres}, the map $\ftop{\phi(g)}$ is inert in $\Delta$.
	Therefore, the morphism $\forgetful{\ftop{\phi(g)}}$ is uniquely determined by $\carrier{g}$, i.e.\ $\forgetful{\ftop{\phi(g)}}=\forgetful{(\Fr\carrier{g})}=\frc \carrier{g}$.
	In particular, by naturality of $\nu$ we have
	\begin{equation*}
	\begin{tikzcd}
		{[n]}
			\ar[r, "\carrier{g}"]
			\ar[d, "\nu_{[n]}"']
		& {[n']}
			\ar[d, "\nu_{[n']}"] \\
		{\frc[n]}
			\ar[r, "\frc\carrier{g}"']
		& {\frc[n']}
	\end{tikzcd}
	\end{equation*}
	so $\carrier{\psi\phi(g)}=\carrier{g}$, and $g$ preserves the marking as $\marking{\psi\phi(g)}=\marking{g}$ by uniqueness of the dotted arrow in \eqref{diag:psimor}.
	So, we obtain the required equality, $\psi\phi=\id_{\Alt_{\rfree}}$.

	Let $\eta\:[m] \twoheadrightarrow [n]$ be an object in $\fDel_0$.
	By \cref{constr:phi}, we have that~$\phi\psi(\eta)$ is the quotient map associated with $[m]$ and with respect to $\marking{\psi(\eta)}$.
	The wide subgraph $\marking{\psi(\eta)}$ of $[m]$ is identified with the edges that are mapped to identities by $\eta$, thus collapsing them in the ordinal $[m]$ produces the category $[n]$ and the quotient map $\phi\psi(\eta)$ is $\eta$.
	For a map $g\:\eta \to \eta'$ in $\fDel_0$, we obtain~$\ftop{\phi\psi(g)}=\Fr\carrier{\psi(g)}$ by construction of $\phi$, which is also equal to $\ftop{g}$ since $g$ is in $\fDel_0$ (the morphism $\ftop{g}$ is inert in $\Delta$). Finally, the morphism $\fbot{\phi\psi(g)}$ is $\fbot{g}$ by the uniqueness of the dotted arrow in \eqref{diag:phimor}.
	Thus, $\phi\psi=\id_{\fDel_0}$ and we have~$\Ar{\Alt}{\rfree}\cong\fDel_0$.
\end{proof}

\subsection{Main results from monads with arities}\label{sec:nervethm}
We have seen that $\fDel_0$ is isomorphic to the category of arities of the monad~$\rfree$, and from now on we will say that the monad $\rfree$ has arities $\fDel_0$.
We will now show that the theory $\Theta_{\rfree}$  associated with $\rfree$ (see \cref{sec:mwa}) can be identified with $\fDel$.
Consider the composition of the functors
\begin{equation*}
	\fDel_0 \xhookrightarrow{\emb{i}_0} \RelGraph \xrightarrow{\rFr} \RelSemiCat.
\end{equation*}
The category $\Theta_{\rfree}$ is defined by factoring this composition into a bijective-on-objects functor followed by a fully faithful functor.
The inclusion $\emb{j}\:\fDel_0 \hookrightarrow \fDel$ is the identity-on-objects as $\fDel_0$ is a wide subcategory of $\fDel$.

\begin{lem}
	The canonical functor $\emb{i}\:\fDel \to \RelSemiCat$ is fully faithful.
\end{lem}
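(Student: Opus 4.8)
The plan is to make the functor $\emb{i}$ fully explicit and then exhibit a bijection between hom-sets. Unwinding the definitions, $\emb{i}$ sends an object $\eta\colon[m]\twoheadrightarrow[n]$ to the free relative semicategory $\rFr\psi(\eta)$ on the relative linear graph $\psi(\eta)$ of \cref{constr:psi}; concretely this is the relative semiordinal with object set $\{0,\dots,m\}$, a unique morphism $i\to j$ for every $i<j$, and such a morphism marked precisely when $\eta(i)=\eta(j)$ (equivalently, when $\eta$ collapses every edge between $i$ and $j$). A morphism $f\colon\eta_0\to\eta_1$, that is a square as in \eqref{diag:mofDel}, is sent to the semicategory functor whose action on objects is $\ftop{f}\colon[m_0]\to[m_1]$. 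Since a semiordinal has at most one morphism between any two objects, any functor $\emb{i}\eta_0\to\emb{i}\eta_1$ is determined by its object map, and that map must be strictly monotone (the morphism $i\to i+1$ forces its image to be strictly increasing), hence a monomorphism $[m_0]\to[m_1]$ in $\Delta$.

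For faithfulness I would observe that the object map of $\emb{i}f$ is exactly $\ftop{f}$, and that $\ftop{f}$ determines $f$: by \cref{rmk:bodet} the bottom map is $\fbot{f}=\eta_1\ftop{f}s$ for any section $s$ of $\eta_0$. Hence $f\mapsto\emb{i}f$ is injective.

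For fullness, let $F\colon\emb{i}\eta_0\to\emb{i}\eta_1$ be a marking-preserving functor. Its object map is a strictly monotone injection, so it is a monomorphism $\ftop{f}\colon[m_0]\to[m_1]$ in $\Delta$. The crux is that $F$ preserving marking is exactly the condition that closes the square in $\fDel$. Because the marked morphisms of $\emb{i}\eta_0$ are generated by the marked edges and $F$ preserves composition, marking-preservation is equivalent to its restriction to edges, namely $\eta_0(i)=\eta_0(i+1)\Rightarrow\eta_1(\ftop{f}(i))=\eta_1(\ftop{f}(i+1))$; by monotonicity of $\eta_0$ this is in turn equivalent to $\eta_0(i)=\eta_0(j)\Rightarrow\eta_1\ftop{f}(i)=\eta_1\ftop{f}(j)$ for all $i<j$. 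This says precisely that $\eta_1\ftop{f}$ identifies every pair identified by the epimorphism $\eta_0$, so it factors uniquely as $\fbot{f}\eta_0$ for an order-preserving $\fbot{f}\colon[n_0]\to[n_1]$. The resulting commutative square is a morphism $f$ of $\fDel$ with $\emb{i}f=F$, establishing surjectivity on hom-sets.

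I expect the main obstacle to be the marking bookkeeping in the fullness step: one must check that preservation of \emph{all} marked composites reduces to preservation of marked edges, and that this edge condition coincides on the nose with the factorisation constraint making $\fbot{f}$ well defined. Once these two conditions are identified, the bijection $\fDel(\eta_0,\eta_1)\cong\RelSemiCat(\emb{i}\eta_0,\emb{i}\eta_1)$ is immediate and yields full faithfulness.
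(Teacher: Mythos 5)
Your proof is correct and follows essentially the same route as the paper's: identify $\emb{i}(\eta)$ as the relative semiordinal on $[m]$ with $i\to j$ marked iff $\eta(i)=\eta(j)$, send $f$ to $\ftop{f}$, and recover $\fbot{f}$ from $\ftop{f}$ via a section of $\eta_0$ as in \cref{rmk:bodet}. You simply spell out in more detail the fullness step (that marking-preservation of an abstract functor reduces to the edge condition and yields the factorisation defining $\fbot{f}$), which the paper compresses into the single appeal to \cref{rmk:bodet}.
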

\begin{proof}
	The functor $\emb{i}$ maps an object $\eta\:[m] \twoheadrightarrow [n]$ to the linear semicategory $[m]$ equipped with the wide subsemicategory of non-trivial edges that are sent to identities by $\eta$ as marked morphisms.
	Thus, the induced map on morphisms $\emb{i}\: \fDel(\eta,\eta') \to \RelSemiCat(\emb{i}(\eta),\emb{i}(\eta'))$ sends $f$ to the top arrow $\ftop{f}$.
	This is well-defined, as the commutativity of the square $f$ ensures that $\ftop{f}$ preserves the marking of the edges and, since $\ftop{f}$ is a monomorphism, it restricts to linear semicategories.
	The fact that $\fDel(\eta,\eta') \to \RelSemiCat(\emb{i}(\eta),\emb{i}(\eta'))$ is an isomorphism follows from \cref{rmk:bodet}.
\end{proof}

In particular, the composition $\fDel_0 \xrightarrow{\emb{j}} \fDel \xhookrightarrow{\emb{i}} \RelSemiCat$ factors $\rFr \emb{i}_0$ as a bijective-on-objects followed by a fully faithful functor.
Therefore, we have the following result:

\begin{thm}\label{thm:fDeldesc}
	The category $\Theta_{\rfree}$ is isomorphic to $\fDel$.\qed
\end{thm}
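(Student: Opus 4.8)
The plan is to deduce the isomorphism from the uniqueness, up to canonical isomorphism, of the \emph{(bijective-on-objects, fully faithful)} factorisation of a functor: these two classes form an orthogonal factorisation system on $\Cat$, so any two such factorisations of a fixed functor are uniquely isomorphic. The whole argument therefore reduces to exhibiting $\fDel$ as the middle term of such a factorisation of the canonical composite defining $\Theta_{\rfree}$.

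First I would recall that, by construction (see \cref{sec:mwa}) together with the identification $\Ar{\Alt}{\rfree} \cong \fDel_0$ of \cref{thm:arfDel} and the equivalence $\EM{\rfree} \simeq \RelSemiCat$, the category $\Theta_{\rfree}$ is the middle object of the (bijective-on-objects, fully faithful) factorisation of $\rFr\,\emb{i}_0 \: \fDel_0 \to \RelSemiCat$. I would then point to the second factorisation already assembled in the text, namely $\fDel_0 \xrightarrow{\emb{j}} \fDel \xrightarrow{\emb{i}} \RelSemiCat$: here $\emb{j}$ is bijective-on-objects because $\fDel_0$ is a wide subcategory of $\fDel$, and $\emb{i}$ is fully faithful by the preceding Lemma.

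The one verification that genuinely requires content is that these are factorisations of the \emph{same} functor, i.e.\ that $\emb{i}\,\emb{j} = \rFr\,\emb{i}_0$ on the nose. On objects this is the observation that, for an epimorphism $\eta \: [m] \twoheadrightarrow [n]$ viewed as a partially marked linear graph $\alpha$, the free relative semicategory $\rFr(\alpha)$ and the linear semicategory $\emb{i}(\eta)$ carry the same marked morphisms: a chain of marked edges from $i$ to $j$ exists exactly when $\eta(i)=\eta(j)$, which is precisely the condition for the morphism $i \to j$ of $[m]$ to be collapsed by $\eta$. On morphisms both composites select the top (inert) arrow $\ftop{f}$, so the functors agree. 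Granting this, orthogonality of the factorisation system yields a unique isomorphism $\Theta_{\rfree} \cong \fDel$ compatible with $\emb{j}, j_{\rfree}$ and $\emb{i}, i_{\rfree}$, which is the claim.

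I expect the main obstacle to be this identity $\emb{i}\,\emb{j} = \rFr\,\emb{i}_0$ rather than the formal factorisation argument, which is routine. It rests on the explicit descriptions of the arities-to-$\fDel_0$ isomorphism $\psi,\phi$ from \cref{constr:phi,constr:psi} and on the action of the embedding $\emb{i}$, so the care lies in matching the marked morphisms and checking functoriality; however, most of this bookkeeping is already encapsulated in the preceding Lemma and in \cref{thm:arfDel}, which is why the statement can be regarded as essentially immediate.
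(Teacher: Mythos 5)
Your proposal is correct and follows essentially the same route as the paper: the paper defines $\Theta_{\rfree}$ as the middle term of the (bijective-on-objects, fully faithful) factorisation of $\rFr\,\emb{i}_0$, proves in the preceding lemma that $\emb{i}\:\fDel\to\RelSemiCat$ is fully faithful, and concludes by observing that $\emb{i}\,\emb{j}$ provides a second such factorisation, whence the isomorphism by uniqueness. Your explicit check that the two composites agree (matching the marked morphisms of $\rFr(\alpha)$ with those of $\emb{i}(\eta)$ via the condition $\eta(i)=\eta(j)$) is exactly the point the paper leaves implicit in the phrase ``factors $\rFr\,\emb{i}_0$'', and your verification of it is sound.
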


Since $\Theta_{\rfree}$ corresponds to the category of free relative semicategories over the category of relative linear graphs, the theory associated with $\rfree$ is the category of non-empty finite relative semiordinals.
This gives a rigorous proof of the intuition conveyed in \cite{Kock2006,Pao2025} and \cref{sec:fDeldef} that $\fDel$ can either be described in terms of relative semiordinals or epimorphisms of $\Delta$.

The main result of this section, the nerve theorem for $\fDel$, is a direct consequence of \cref{thm:nerve,thm:fDeldesc}.

\begin{thm}\label{thm:fnerve}
	The nerve functor $\fNerve\:\RelSemiCat \to \Psh{\fDel}$ is fully faithful.
	The essential image is spanned by the presheaves whose restriction along $\emb{j}$ belongs to the essential image of $\fNerve_0\:\RelGraph \to \Psh{\fDel_0}$.
\end{thm}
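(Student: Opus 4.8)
The plan is to obtain this result as an immediate application of the Abstract nerve theorem (\cref{thm:nerve}) to the monad $\rfree$, using the identifications established earlier in this section. First I would recall that by \cref{prop:rfreeStrgCart} the monad $\rfree$ is strongly cartesian, and by \cref{thm:arfDel} together with \cref{thm:scmag} it has arities $\fDel_0$. Thus \cref{thm:nerve} applies directly: the nerve functor $\Nerve_{\rfree}\:\EM{\rfree} \to \Psh{\Theta_{\rfree}}$ associated to the full inclusion $i_{\rfree}\:\Theta_{\rfree} \hookrightarrow \EM{\rfree}$ is fully faithful, with essential image spanned by those presheaves whose restriction along $j_{\rfree}$ lies in the essential image of $\Nerve_{\fDel_0}$.

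The remaining work is purely a matter of transporting this statement across the isomorphisms already constructed. By the monadicity established earlier we have the equivalence $\EM{\rfree} \simeq \RelSemiCat$, and by \cref{thm:fDeldesc} we have the isomorphism $\Theta_{\rfree} \cong \fDel$. Under these identifications the full inclusion $i_{\rfree}$ becomes the functor $\emb{i}\:\fDel \to \RelSemiCat$, and the bijective-on-objects functor $j_{\rfree}\:\Ar{\Alt}{\rfree} \to \Theta_{\rfree}$ becomes precisely the inclusion $\emb{j}\:\fDel_0 \hookrightarrow \fDel$. Consequently $\Nerve_{\rfree}$ is identified with $\fNerve\:\RelSemiCat \to \Psh{\fDel}$, and the full faithfulness of the former yields the full faithfulness of the latter. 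I would be careful here to check that these identifications are compatible, i.e.\ that the square relating $\emb{i}$, $\emb{j}$ and the free-forgetful data commutes, which is exactly what was verified in the factorisation of $\rFr\emb{i}_0$ preceding \cref{thm:fDeldesc}.

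For the characterisation of the essential image, the restriction functor $j_{\rfree}^*\:\Psh{\Theta_{\rfree}} \to \Psh{\Ar{\Alt}{\rfree}}$ corresponds under the isomorphisms to $\emb{j}^*\:\Psh{\fDel} \to \Psh{\fDel_0}$, and the nerve $\Nerve_{\Ar{\Alt}{\rfree}}$ on the generator side is the nerve $\fNerve_0\:\RelGraph \to \Psh{\fDel_0}$. Hence a presheaf on $\fDel$ lies in the essential image of $\fNerve$ if and only if its restriction along $\emb{j}$ lies in the essential image of $\fNerve_0$, which is the asserted Segal-type condition. The main obstacle, such as it is, is bookkeeping rather than mathematics: one must confirm that the dense generator $\Alt$ used to build the arities is identified with $\fDel_0$ in a way that makes $\Nerve_{\A}$ (for $\A = \Ar{\Alt}{\rfree}$) literally the functor $\fNerve_0$, so that the abstract Segal condition on restriction along $j_{\rfree}$ reads off as the stated condition on restriction along $\emb{j}$. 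Given \cref{thm:arfDel,thm:fDeldesc}, this is straightforward, and no further genericness or colimit computation is required.
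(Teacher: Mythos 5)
Your proposal is correct and follows essentially the same route as the paper: the paper's proof likewise invokes \cref{thm:arfDel} to get that $\rfree$ has arities $\fDel_0$, \cref{thm:fDeldesc} to identify $\Theta_{\rfree}$ with $\fDel$, and then concludes by \cref{thm:nerve}. Your additional care in transporting the statement across the identifications (matching $i_{\rfree}$ with $\emb{i}$ and $j_{\rfree}$ with $\emb{j}$) is implicit in the paper's one-line argument and is correctly handled.
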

\begin{proof}
	The monad $\rfree$ has arities $\fDel_0$ by \cref{thm:arfDel} and the category of free $\rfree$-algebras over $\fDel_0$ is isomorphic to $\fDel$ by \cref{thm:fDeldesc}.
	Thus, the result follows from \cref{thm:nerve}.
\end{proof}
For comparison, we also provide a direct proof of the first part of \cref{thm:fnerve}.
\begin{proof}[Alternative proof of the first part of \cref{thm:fnerve}]
	Let $\C$ and $\D$ be two relative semicategories; we need to show that the hom-set $\RelSemiCat(\C,\D)$ is canonically equivalent to the set of natural transformations $\Psh{\fDel}(\fNerve \C,\fNerve \D)$.
	Let $\phi$ be such a natural transformation.
	Notice that $\phi_{[0]^{\flat}}\:(\fNerve \C)_{[0]^{\flat}} \to (\fNerve \D)_{[0]^{\flat}}$ is a map between the set of objects of $\C$ and $\D$, and $\phi_{[1]^{\flat}}\: (\fNerve \C)_{[1]^{\flat}} \to (\fNerve \D)_{[1]^{\flat}}$ is a map from the set of all morphisms in $\C$ to the set of all morphisms in~$\D$.
	So naturality, applied on the face maps $d_0$ and $d_1$, ensures that the pair of $\phi_{[0]^{\flat}}$ and $\phi_{[1]^{\flat}}$ is a graph morphism. Similarly, $\phi_{[2]^{\flat}}$ means that the graph morphism respects composition.
	Finally, $\phi_{[1]^{\sharp}}$ together with the naturality square that we get from the unique morphism in $\fDel^{\op}([1]^{\sharp},[1]^{\flat})$, ensures that our graph morphism sends marked morphisms in $\C$ to marked morphisms in $\D$.
	This data exactly determines a functor $f\:\C \to \D$ between the given relative semicategories, and it is clear that $\fNerve(f)$ gives this data.

	What is left to check is that $\phi_{\eta}$, for any object $\eta \in \fDel$, as well as the corresponding naturality conditions, does not add any non-trivial information to the above (i.e., the space of extensions of a functor to a natural transformation is contractible); but this follows from the 2-coskeletality of $\D$.
	More precisely, as every relative semiordinal $\emb{i}(\eta)$ is free on its relative linear graph, the value of the nerve at $\eta$ is simply the corresponding ``Segal limit.''
\end{proof}

\begin{thm}\label{thm:fDelhomo}
	The theory associated with the monad $\rfree$, that is $\fDel$, is homogeneous.
\end{thm}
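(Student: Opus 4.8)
The plan is to deduce this directly from \cref{thm:homog}. That theorem guarantees that for any strongly cartesian monad $T$ on a finitely complete category $\E$ equipped with a dense generator, the associated theory with arities the canonical arities $\Ar{\A}{T}$ is homogeneous. So the first step is to verify the three hypotheses for $T = \rfree$. The category $\RelGraph$ is finitely complete, since it is a separated presheaf category and hence a quasitopos with all finite limits, as noted in the proof of \cref{prop:rfreeStrgCart}. The full subcategory $\Alt$ of alternatingly marked linear graphs is a dense generator of $\RelGraph$, as used throughout \cref{sec:arities}. And $\rfree$ is strongly cartesian by \cref{prop:rfreeStrgCart}.

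With the hypotheses in hand, \cref{thm:homog} yields immediately that the associated theory $(\Theta_{\rfree}, j_{\rfree})$ with arities $\Ar{\Alt}{\rfree}$ is homogeneous; concretely, $j_{\rfree}$ is faithful and $\Theta_{\rfree}$ admits a generic-free factorisation system. The second step is to transport this conclusion along the isomorphisms already proved. By \cref{thm:fDeldesc} we have $\Theta_{\rfree} \cong \fDel$, and by \cref{thm:arfDel} we have $\Ar{\Alt}{\rfree} \cong \fDel_0$; under these identifications $j_{\rfree}$ becomes the wide inclusion $\emb{j}\:\fDel_0 \hookrightarrow \fDel$ recorded in \cref{sec:nervethm}. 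As homogeneity is a property invariant under isomorphism of theories, we conclude that $\fDel$ is homogeneous.

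I expect no real difficulty, as the statement is a formal corollary of the apparatus built in the previous sections; the substantive content lives in \cref{prop:rfreeStrgCart}, \cref{thm:arfDel} and \cref{thm:fDeldesc}. The only point worth a moment's care is confirming that the two constituents of homogeneity survive the transport: faithfulness of $j_{\rfree}$ clearly descends to faithfulness of $\emb{j}$ (an inclusion of a wide subcategory), and an isomorphism of categories carries one generic-free factorisation system to another. Pinning down the transported system as the active-inert factorisation of \eqref{diag:actin} is a separate matter, handled in \cref{cor:actinfDel}.
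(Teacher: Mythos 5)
Your proposal is correct and follows essentially the same route as the paper: verify that $\rfree$ is strongly cartesian (\cref{prop:rfreeStrgCart}), identify the associated theory with $\fDel$ via \cref{thm:fDeldesc}, and apply \cref{thm:homog}. The paper's proof is just a terser version of yours, leaving the verification of finite completeness of $\RelGraph$, the density of $\Alt$, and the transport along the isomorphisms of \cref{thm:arfDel,thm:fDeldesc} implicit.
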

\begin{proof}
	By \cref{prop:rfreeStrgCart}, the monad $\rfree$ is strongly cartesian, and by \cref{thm:fDeldesc} the theory associated to $\rfree$ is $\fDel$. Thus, the result follows from \cref{thm:homog}.
\end{proof}

\cref{thm:fDelhomo} and the definition of a homogeneous theory in \cref{sec:mwa} yield an orthogonal factorisation system on $\fDel$ given by generic and free morphisms.
By \cref{rmk:GraphMorphDistPres} and unfolding the construction of \cref{thm:arfDel}, the set of free morphisms can be seen to be the inert part of $\fDel$, introduced in \cref{sec:activeinert}.
The set of generic morphisms can be proven to be exactly the active part of $\fDel$ by a direct computation.
Although not difficult, the argument is rather technical, and thus we prefer to give a direct construction of the orthogonal factorisation system.
By uniqueness of the left class in an orthogonal factorisation system, it follows that the generic morphisms are precisely the active ones.

\begin{prop}\label{cor:actinfDel}
  The active and inert maps form an orthogonal factorisation system $(\fDel_a, \fDel_0)$ on $\fDel$.
\end{prop}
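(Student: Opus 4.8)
The plan is to verify directly the two defining data of an orthogonal factorisation system—existence of an $(\fDel_a,\fDel_0)$-factorisation and the unique diagonal lifting property—while exploiting throughout the classical active–inert factorisation system on $\Delta$ (active $=$ endpoint-preserving, inert $=$ distance-preserving) together with the pushout clause built into the definition of $\fDel_a$.

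First I would construct the factorisation of a given $f\:\eta_0 \to \eta_1$, with top $\ftop{f}\:[m_0]\hookrightarrow[m_1]$ and bottom $\fbot{f}$. Factor the monomorphism $\ftop{f}$ in $\Delta$ as $[m_0]\xrightarrow{a}[m']\xrightarrow{\iota}[m_1]$ with $a$ active and $\iota$ inert; both are monomorphisms since $\ftop{f}$ is. Form the pushout of $a$ along $\eta_0$ in $\Delta$, which exists by \cite[Corollary~3.3]{CFPS2023}, obtaining $\eta'\:[m']\twoheadrightarrow[p]$ with legs $\eta'$ and $b\:[n_0]\to[p]$: the leg $\eta'$ is an epimorphism since epimorphisms are stable under pushout, and $b$ is a monomorphism by \cref{lem:moprep}. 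This exhibits an active morphism $\eta_0\to\eta'$. For the inert part, the universal property of the pushout applied to the cocone $(\eta_1\iota,\fbot{f})$—a genuine cocone precisely because the square $f$ commutes, so that $\eta_1\iota a=\eta_1\ftop{f}=\fbot{f}\eta_0$—yields a unique $c\:[p]\to[n_1]$ with $c\eta'=\eta_1\iota$ and $cb=\fbot{f}$. The square with top $\iota$ and bottom $c$ is an inert morphism $\eta'\to\eta_1$, and its composite with the active part recovers $f$ on tops ($\iota a=\ftop{f}$) and on bottoms ($cb=\fbot{f}$).

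The main work is the unique lifting. Given an active $l$ and an inert $r$ with a commuting square $r\,u=v\,l$ in $\fDel$, the top arrows form a lifting problem between an active and an inert map of $\Delta$, so the active–inert system on $\Delta$ supplies a unique diagonal $\ftop{w}$ with $\ftop{w}\,\ftop{l}=\ftop{u}$ and $\ftop{r}\,\ftop{w}=\ftop{v}$. To obtain $\fbot{w}$ I would use that $l$ is a pushout square, so the bottom object of its codomain is the pushout of $\eta_0$ along $\ftop{l}$. The map $\ftop{w}$ postcomposed with the defining epimorphism of the domain of $r$, together with $\fbot{u}$, forms a cocone on this pushout—the agreement on the apex being exactly $\ftop{w}\,\ftop{l}=\ftop{u}$ combined with the commutativity of $u$—and hence induces a unique $\fbot{w}$ making the $w$-square commute and satisfying $\fbot{w}\,\fbot{l}=\fbot{u}$. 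A short diagram chase, again using uniqueness of maps out of the pushout, then verifies the remaining identity $\fbot{r}\,\fbot{w}=\fbot{v}$, by checking it after precomposition with each pushout leg. Uniqueness of $w$ is forced on top by the $\Delta$-system and on the bottom by the same universal property, since any diagonal must satisfy precisely the two constraints that pin down $\fbot{w}$.

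Finally, both classes contain all isomorphisms (an invertible square has an invertible top, which is simultaneously active and inert in $\Delta$, and is trivially a pushout) and are closed under composition (active and inert maps compose in $\Delta$, and a horizontal pasting of pushout squares is a pushout). By the standard characterisation, the existence of the factorisation together with the unique lifting property already constitute an orthogonal factorisation system. The crux of the argument—and the very reason the pushout clause is imposed on $\fDel_a$—is the lifting step: the pushout property is exactly what promotes the uniquely determined top diagonal to a uniquely determined morphism of $\fDel$, and I expect it to be the only step requiring genuine care.
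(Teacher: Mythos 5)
Your proposal is correct, and its factorisation step is identical to the paper's: factor the top monomorphism $\ftop{f}$ into active followed by inert in $\Delta$, push out the active part along $\eta_0$ (citing the same \cite[Corollary~3.3]{CFPS2023} for existence), and obtain the inert part from the universal property of the pushout. Where you diverge is in how orthogonality is established. The paper verifies \emph{functoriality} of the factorisation --- given a morphism between two factored maps, it produces the unique comparison map on the middle objects, using the $\Delta$-level factorisation system on top and the pushout universal property on the bottom --- and then invokes \cite[Lemma~1.8]{Riehl2008} to conclude that a functorial factorisation through two composition-closed classes containing the isomorphisms is an orthogonal factorisation system. You instead verify the unique diagonal lifting property directly, but with exactly the same two ingredients: uniqueness of the top diagonal $\ftop{w}$ from the active--inert system on $\Delta$, and the pushout clause in the definition of $\fDel_a$ to induce and pin down $\fbot{w}$ (note that uniqueness of $\fbot{w}$ already follows from $\eta'$ being an epimorphism, so the two constraints you list are not independent). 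Your route is marginally more self-contained, as it avoids the external characterisation lemma; the paper's route packages the same computation as a functoriality check, which is arguably the cleaner bookkeeping since the comparison-map diagram is drawn once. Both are complete proofs, and your closing observation --- that the pushout condition on $\fDel_a$ is precisely what makes the bottom diagonal unique --- correctly identifies why that condition appears in the definition, as the paper's discussion in \cref{sec:activeinert} also explains.
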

\begin{proof}
We prove that any map in $\fDel$ factors functorially through the classes $\fDel_a$ and $\fDel_0$, which by \cite[Lemma 1.8]{Riehl2008} is equivalent to the usual notion of an orthogonal factorisation system.
Clearly, $\fDel_a$ and $\fDel_0$ contain all isomorphisms and are closed under composition since the same holds for the active-inert factorisation system in~$\Delta$.
Let us show that every map in $\fDel$ factors as a map in $\fDel_a$ followed by a map in $\fDel_0$.
Consider a map $f\: \eta_0 \to \eta_1$ in $\fDel$, with $\eta_0\:[m_0] \twoheadrightarrow [n_0]$ and $\eta_1\:[m_1] \twoheadrightarrow [n_1]$.
First, we factor the monomorphism $\ftop{f}$ in~$\Delta$ into an active  $\ftop{a}\:[m_0] \hookrightarrow [m]$ followed by an inert map $\ftop{i}\: [m] \hookrightarrow [m_1]$.
Since $\ftop{i}\,\ftop{a}=\ftop{f}$ and $\ftop{i}$ are monomorphisms (any inert map in $\Delta$ is a monomorphism), so is $\ftop{a}$.
Now, we construct the following diagram:
\begin{center}
\begin{tikzcd}
	{[m_0]}
		\ar[r, "\ftop{a}" description, hook]
		\ar[d, "\eta_0"', twoheadrightarrow]
		\ar[rr, "\ftop{f}" description, bend left] &
	{[m]}
		\ar[r, "\ftop{i}" description, hook]
		\ar[d, "\eta", twoheadrightarrow] &
	{[m_1]}
		\ar[d, "\eta_1", twoheadrightarrow] \\
	{[n_0]}
		\ar[r, "\fbot{a}" description]
		\ar[rr, "\fbot{f}" description, bend right] &
	{[n]}
		\ar[r, "\fbot{i}" description, dotted]
		\ar[ul, phantom, very near start, "{\ulcorner}"] &
	{[n_1]}
\end{tikzcd}
\end{center}
The pushout exists by \cite[Corollary 3.3]{CFPS2023}.
It remains to show the functoriality of the factorisation, that is, given two active-inert factorisations and maps as in the following solid diagram
\begin{equation*}
\begin{tikzcd}[row sep=0.7cm, column sep=0.9cm]
	& {[m_0]}
		\ar[dl, "\ftop{f_0}" description, hook']
		\ar[rr, "\ftop{a}" description, hook]
		\ar[dd, "\eta_0" description, near start, twoheadrightarrow]
		\ar[ddrr, "\ulcorner" description, very near end, phantom]
	&& {[m_1]}
		\ar[dl, "\ftop{f_1}" description, dotted]
		\ar[dd, "\eta_1" description, near start, twoheadrightarrow]
		\ar[rr, "\ftop{i}" description, hook]
	&& {[m_2]}
		\ar[dl, "\ftop{f_2}" description, hook']
		\ar[dd, "\eta_2" description, twoheadrightarrow]\\
	{[m'_0]}
		\ar[rr, "\ftop{a'}" description, near end, crossing over, hook]
		\ar[dd, "\eta'_0" description, twoheadrightarrow]
		\ar[ddrr, "\ulcorner" description, very near end, phantom]
	&& {[m'_1]}
		\ar[rr, "\ftop{i'}" description, near end, crossing over, hook]
	&& {[m'_2]}\\
	& {[n_0]}
		\ar[dl, "\fbot{f_0}" description]
		\ar[rr, "\fbot{a}" description, near start]
	&& {[n_1]}
		\ar[dl, "\fbot{f_1}" description, dotted]
		\ar[rr, "\fbot{i}" description, near start]
	&& {[n_2]}
		\ar[dl, "\fbot{f_2}" description]\\
	{[n'_0]}
		\ar[rr, "\fbot{a'}" description, crossing over]
	&& {[n'_1]}
		\ar[from=uu, "\eta'_1" description, near start, twoheadrightarrow, crossing over]
		\ar[rr, "\fbot{i'}" description, crossing over]
	&& {[n'_2]}
		\ar[from=uu, "\eta'_2" description, near start, twoheadrightarrow, crossing over]
\end{tikzcd}
\end{equation*}
there is a unique $f_1\:\eta_1 \to \eta_1'$ making the squares commute.
Since $\ftop{a}$ and $\ftop{a'}$ are active, and $\ftop{i}$ and $\ftop{i'}$ are inert, functoriality of the active-inert factorisation system in $\Delta$ applied to the top diagram yields a unique map $\ftop{f_1}\:[m_1] \to [m'_1]$ making the top squares commute.
In particular, we have $\ftop{f_2}\,\ftop{i}=\ftop{i'}\,\ftop{f_1}$, so since $\ftop{f_2}\,\ftop{i}$ is a monomorphism by composition, $\ftop{f_1}$ is also monic.
Finally, $\ftop{f_1}$ and the universal property of the left-back pushout square yield a unique morphism $\fbot{f_1}\:[n_1] \to [n'_1]$ in $\Delta$ making everything commute.
\end{proof}

%%% Local Variables:
%%% mode: latexmk
%%% TeX-master: "main"
%%% End:

\printbibliography

@inproceedings{ACK2016,
  author    = {Altenkirch, Thorsten and Capriotti, Paolo and Kraus, Nicolai},
  title     = {Extending Homotopy Type Theory with Strict Equality},
  booktitle = {25th EACSL Annual Conference on Computer Science Logic (CSL 2016)},
  pages     = {21:1--21:17},
  series    = {Leibniz International Proceedings in Informatics (LIPIcs)},
  editor    = {Jean-Marc Talbot and Laurent Regnier},
  year      = {2016},
  volume    = {62},
  publisher = {Schloss Dagstuhl -- Leibniz-Zentrum f{\"u}r Informatik},
  doi       = {10.4230/LIPIcs.CSL.2016.21}
}

@article{ACKS2023,
  title        = {Two-Level Type Theory and Applications},
  author       = {Annenkov, Danil and Capriotti, Paolo and Kraus, Nicolai and Sattler, Christian},
  year         = {2023},
  journaltitle = {Mathematical Structures in Computer Science},
  volume       = {33},
  number       = {8},
  pages        = {688--743},
  doi          = {10.1017/S0960129523000130}
}

@article{ACU2015,
  title   = {Monads need not be endofunctors},
  volume  = {11},
  number  = {1},
  doi     = {10.2168/lmcs-11(1:3)2015},
  journal = {Logical Methods in Computer Science},
  author  = {Altenkirch, Thorsten and Chapman, James and Uustalu, Tarmo},
  year    = {2015}
}

@article{Ber2002,
author = {Clemens Berger},
title = {A Cellular Nerve for Higher Categories},
journal = {Advances in Mathematics},
volume = {169},
number = {1},
pages = {118-175},
year = {2002},
doi = {10.1006/aima.2001.2056},
}

@article{Ber2023,
  title   = {Moment categories and operads},
  author  = {Berger, Clemens},
  journal = {Theory and Applications of Categories},
  volume  = {38},
  number  = {39},
  pages   = {1485--1537},
  year    = {2022}
}

@incollection{Bergner2011,
  title     = {Models for $(\infty, n)$-categories and the cobordism hypothesis},
  author    = {Bergner, Julia E},
  booktitle = {Mathematical foundations of quantum field theory and perturbative string theory},
  series    = {Proceedings of Symposia in Pure Mathematics},
  editor    = {Hisham Sati and Urs Schreiber},
  volume    = {83},
  pages     = {17--30},
  year      = {2011},
  doi       = {10.1090/pspum/083/2742424},
  publisher = {American Mathematical Society}
}

@article{BK2012,
  title        = {Relative Categories: {{Another}} Model for the Homotopy Theory of Homotopy Theories},
  shorttitle   = {Relative Categories},
  author       = {Barwick, C. and Kan, D. M.},
  year         = {2012},
  journaltitle = {Indagationes Mathematicae},
  volume       = {23},
  number       = {1},
  doi          = {10.1016/j.indag.2011.10.002}
}

@article{BMW2012,
  title        = {Monads with Arities and Their Associated Theories},
  author       = {Berger, Clemens and Melliès, Paul-André and Weber, Mark},
  year         = {2012},
  journaltitle = {Journal of Pure and Applied Algebra},
  volume       = {216},
  number       = {8},
  doi          = {10.1016/j.jpaa.2012.02.039}
}

@book{Borceux1994,
  title     = {Handbook of Categorical Algebra},
  volume    = {2},
  subtitle  = {Categories and Structures},
  series    = {Encyclopedia of Mathematics and its Applications},
  number    = {51},
  doi       = {10.1017/cbo9780511525865},
  publisher = {Cambridge University Press},
  author    = {Borceux, Francis},
  year      = 1994
}

@article{CFPS2023,
  title        = {Weak Cartesian Properties of Simplicial Sets},
  author       = {Constantin, Carmen and Fritz, Tobias and Perrone, Paolo and Shapiro, Brandon T.},
  year         = {2023},
  journaltitle = {Journal of Homotopy and Related Structures},
  volume       = {18},
  number       = {4},
  doi          = {10.1007/s40062-023-00334-1}
}

@article{CH2021,
  title        = {Homotopy-coherent algebra via {Segal} conditions},
  author       = {Chu, Hongyi and Haugseng, Rune},
  year         = {2021},
  journaltitle = {Advances in Mathematics},
  volume       = {385},
  doi          = {10.1016/j.aim.2021.107733}
}

@misc{CSZ2026,
      title={The free bifibration on a functor}, 
      author={Bryce Clarke and Gabriel Scherer and Noam Zeilberger},
      year={2026},
      doi={10.48550/arXiv.2511.07314} 
}

@book{Elephant2,
  title     = {Sketches of an Elephant: A Topos Theory Compendium},
  author    = {Johnstone, Peter T},
  volume    = {2},
  year      = {2002},
  publisher = {Oxford University Press},
  doi       = {10.1093/oso/9780198515982.001.0001}
}

@article{GKT2018a,
  title        = {Decomposition spaces, incidence algebras and {{Möbius}} inversion {{I}}: {{Basic}} theory},
  author       = {G\'alvez-Carrillo, Imma and Kock, Joachim and Tonks, Andrew},
  year         = {2018},
  journaltitle = {Advances in Mathematics},
  volume       = {331},
  doi          = {10.1016/j.aim.2018.03.016},
  pages        = {952--1015}
}

@misc{GP2022,
  title  = {The geometric cobordism hypothesis},
  author = {Daniel Grady and Dmitri Pavlov},
  year   = {2022},
  doi    = {10.48550/arXiv.2111.01095}
}

@article{Harpaz_2015,
  title     = {Quasi-unital $\infty$-categories},
  volume    = {15},
  doi       = {10.2140/agt.2015.15.2303},
  number    = {4},
  journal   = {Algebraic \& Geometric Topology},
  publisher = {Mathematical Sciences Publishers},
  author    = {Harpaz, Yonatan},
  year      = {2015},
  pages     = {2303--2381}
}

@article{Haugseng_2021,
  title   = {Segal spaces, spans, and semicategories},
  author  = {Haugseng, Rune},
  journal = {Proceedings of the American Mathematical Society},
  volume  = {149},
  number  = {3},
  pages   = {961--975},
  year    = {2021},
  doi     = {10.1090/proc/15197}
}

@article{Henry2018,
  author = {{Henry}, Simon},
  title  = {{Regular polygraphs and the Simpson conjecture}},
  year   = 2018,
  doi    = {10.48550/arXiv.1807.02627}
}

@article{HK2022,
  title   = {$\infty$-operads as symmetric monoidal $\infty$-categories},
  author  = {Haugseng, Rune and Kock, Joachim},
  year    = 2024,
  pages   = {111--137},
  volume  = 68,
  number  = 1,
  journal = {Publicacions Matem\`atiques},
  doi     = {10.5565/PUBLMAT6812406}
}

@book{hottbook,
  author    = {The {Univalent Foundations Program}},
  title     = {Homotopy Type Theory: Univalent Foundations of Mathematics},
  publisher = {\url{https://homotopytypetheory.org/book}},
  address   = {Institute for Advanced Study},
  year      = 2013
}

@incollection{JK2007,
  author    = {Joyal, Andr{\'e} and Kock, Joachim},
  title     = {Weak units and homotopy 3-types},
  booktitle = {Categories in algebra, geometry and mathematical physics},
  series    = {Contemp. Math.},
  volume    = {431},
  pages     = {257--276},
  publisher = {Amer. Math. Soc., Providence, RI},
  year      = {2007},
  doi       = {10.1090/conm/431/08277}
}

@inproceedings{Joyal1986,
  title     = {Foncteurs analytiques et esp\`eces de structures},
  booktitle = {Combinatoire \'enum\'erative},
  series    = {Lecture Notes in Mathematics},
  volume    = {1234},
  author    = {Joyal, Andr\'e},
  date      = {1986},
  publisher = {Springer},
  doi       = {10.1007/BFb0072514}
}

@unpublished{Joyal2008,
  author = {Joyal, Andr\'{e}},
  note   = {Lecture notes for an Advanced Course on Simplicial Methods in Higher Categories at the Centre de Recerca Matem\`atica (CRM)},
  title  = {The Theory of Quasi-Categories and its Applications},
  year   = {2008},
  url    = {http://mat.uab.cat/~kock/crm/hocat/advanced-course/Quadern45-2.pdf}
}

@article{Kock2006,
  title   = {Weak identity arrows in higher categories},
  author  = {Kock, Joachim},
  year    = {2006},
  journal = {International Mathematics Research Papers},
  volume  = {2006},
  pages   = {1--54},
  doi     = {10.1155/IMRP/2006/69163}
}

@misc{KockCom,
  author       = {Joachim Kock},
  date         = {2010-08-01},
  howpublished = {Comment on an n-Category Caf\'e's blog post},
  title        = {On Ternary Factorization Systems},
  url          = {https://golem.ph.utexas.edu/category/2010/07/ternary_factorization_systems.html#c034162}
}

@online{KS2017,
  title  = {Space-Valued Diagrams, Type-Theoretically (Extended Abstract)},
  author = {Kraus, Nicolai and Sattler, Christian},
  date   = {2017},
  doi    = {10.48550/arXiv.1704.04543}
}

@book{Leinster2004,
  title     = {Higher Operads, Higher Categories},
  author    = {Leinster, Tom},
  date      = {2004},
  series    = {London Mathematical Society Lecture Note Series},
  volume    = {298},
  publisher = {Cambridge University Press},
  doi       = {10.1017/CBO9780511525896}
}

@book{Loregian2021,
  title     = {Co(end) Calculus},
  author    = {Loregian, Fosco},
  date      = {2021},
  series    = {London Mathematical Society Lecture Note Series},
  volume    = {468},
  publisher = {Cambridge University Press},
  doi       = {10.1017/9781108778657}
}

@article{Lurie2008,
  title   = {On the classification of topological field theories},
  author  = {Lurie, Jacob},
  journal = {Current Developments in Mathematics},
  volume  = {2008},
  number  = {1},
  pages   = {129--280},
  year    = {2008},
  doi     = {10.4310/CDM.2008.v2008.n1.a3}
}

@unpublished{LurieHA,
  title  = {Higher Algebra},
  author = {Lurie, Jacob},
  date   = {2017},
  url    = {https://www.math.ias.edu/~lurie/papers/HA.pdf}
}

@book{LurieHTT,
  title         = {Higher Topos Theory},
  author        = {Lurie, Jacob},
  year          = {2009},
  publisher     = {Princeton University Press},
  series        = {Annals of Mathematics Studies},
  volume        = {170},
  doi           = {10.1515/9781400830558},
  eprint        = {math/0608040},
  archiveprefix = {arXiv}
}

@book{MartinLof1984,
  title     = {Intuitionistic Type Theory},
  author    = {Martin-L\"of, Per},
  year      = 1984,
  series    = {Studies in Proof Theory},
  volume    = 1,
  note      = {Notes by Giovanni Sambin of a series of lectures given in
               Padua, June 1980},
  publisher = {Bibliopolis},
  url       = {https://raw.githubusercontent.com/michaelt/martin-lof/master/pdfs/Bibliopolis-Book-retypeset-1984.pdf}
}

@book{NPS90,
  title     = {Programming in Martin-L\"of's type theory},
  subtitle  = {An introduction},
  author    = {Nordstr\"om, Bengt and Petersson, Kent and Smith, Jan M.},
  year      = 1990,
  publisher = {Clarendon Press, Oxford University Press},
  series    = {International Series of Monographs on Computer Science},
  number    = 7,
  url       = {https://www.cse.chalmers.se/research/group/logic/book/book.pdf}
}

@article{Pao2025,
title={Weakly globular double categories and weak units},
volume={9},
DOI={10.21136/hs.2025.06},
number={1},
journal={Higher Structures},
author={Paoli, Simona},
year={2025},
pages={269--328}
}

@book{Paoli2019,
  title     = {Simplicial Methods for Higher Categories},
  subtitle  = {Segal-type Models of Weak $n$-Categories},
  author    = {Paoli, Simona},
  series    = {Algebra and Applications},
  volume    = {26},
  year      = {2019},
  publisher = {Springer},
  doi       = {10.1007/978-3-030-05674-2}
}

@book{Reflections,
  title     = {Reflections on the Foundations of Mathematics},
  subtitle  = {Univalent Foundations, Set Theory and General Thoughts},
  editor    = {Centrone, Stefania and Kant, Deborah and Sarikaya, Deniz},
  publisher = {Springer},
  series    = {Synthese Library},
  volume    = {407},
  edition   = 1,
  year      = 2019,
  doi       = {10.1007/978-3-030-15655-8}
}

@article{Rezk2001,
  title   = {A model for the homotopy theory of homotopy theory},
  author  = {Rezk, Charles},
  journal = {Transactions of the American Mathematical Society},
  volume  = {353},
  number  = {3},
  pages   = {973--1007},
  year    = {2001},
  doi     = {10.1090/S0002-9947-00-02653-2}
}

@online{Riehl2008,
  title  = {Factorization systems},
  author = {Riehl, Emily},
  year   = {2008},
  url    = {https://emilyriehl.github.io/files/factorization.pdf}
}

@unpublished{Sattler2017,
  title  = {Kock's Fat $\Delta$ Is a Direct Replacement of $\Delta$.},
  author = {Sattler, Christian},
  date   = {2017},
  url    = {https://www.cse.chalmers.se/~sattler/docs/fat-delta.pdf}
}

@thesis{ShapiroPhD,
  title       = {Shape Independent Category Theory},
  author      = {Shapiro, Brandon T.},
  type        = {phdthesis},
  date        = {2022},
  institution = {Cornell University},
  doi         = {10.7298/fcxt-vw82}
}

@article{Shulman2015,
  title        = {Univalence for Inverse Diagrams and Homotopy Canonicity},
  author       = {Shulman, Michael},
  year         = {2015},
  journaltitle = {Mathematical Structures in Computer Science},
  volume       = {25},
  number       = {5},
  doi          = {10.1017/S0960129514000565}
}

@inbook{Shulman2021,
  title     = {Homotopy Type Theory: The Logic of Space},
  booktitle = {New Spaces in Mathematics: Formal and Conceptual Reflections},
  editor    = {Mathieu Anel and Gabriel Catren},
  publisher = {Cambridge University Press},
  author    = {Shulman, Michael},
  year      = {2021},
  pages     = {322--404},
  doi       = {10.1017/9781108854429.009}
}

@online{Sim1998,
  title  = {Homotopy types of strict 3-groupoids},
  author = {Simpson, Carlos},
  year   = {1998},
  doi    = {10.48550/arXiv.math/9810059}
}

@article{Tamsamani1999,
  title    = {Sur des notions de $n$-categorie et $n$-groupoide non strictes via des ensembles multi-simpliciaux},
  subtitle = {(On the notions of a nonstrict $n$-category and $n$-groupoid via multisimplicial sets)},
  author   = {Tamsamani, Zouhair},
  journal  = {K-theory},
  volume   = {16},
  number   = {1},
  pages    = {51--99},
  year     = {1999},
  doi      = {10.1023/a:1007747915317}
}

@book{TTT1985,
  author    = {Michael Barr and Charles Wells},
  title     = {Toposes, Triples and Theories},
  year      = 1985,
  publisher = {Springer-Verlag},
  series    = {Grundlehren der mathematischen Wissenschaften},
  volume    = 278,
  addendum  = {Republished in: \emph{Reprints in Theory and Applications of
               Categories, No.~12 (2005) pp.~1--287}}
}

@article{Weber2004,
  title        = {Generic Morphisms, Parametric Representations and Weakly Cartesian Monads.},
  author       = {Weber, Mark},
  date         = {2004},
  journaltitle = {Theory and Applications of Categories},
  volume       = {13}
}

@article{Weber2007,
  title        = {Familial 2-Functors and Parametric Right Adjoints.},
  author       = {Weber, Mark},
  date         = {2007},
  journaltitle = {Theory and Applications of Categories},
  volume       = {18}
}
\end{document}